\newcommand{\cone}{\mathrm{Cone}}
\newcommand{\A}{\mathfrak{A}}
\newcommand{\B}{\mathfrak{B}}
\newcommand{\tilS}{\widetilde{\Sigma}}
\newcommand{\C}{\mathbb{C}}
\newcommand{\ak}{\mathfrak{a}_k}
 \theoremstyle{plain}
 \newtheorem{theorem}{Theorem}[section]
\newtheorem{proposition}[theorem]{Proposition}
 \newtheorem{lemma}[theorem]{Lemma}
\newtheorem{corollary}[theorem]{Corollary}
\theoremstyle{definition}
\newtheorem{example}{Example}
 \newtheorem{remark}[theorem]{Remark}
\title{Intersection cohomology of type-A toric varieties}
\author{Andras Szenes and Olga Trapeznikova}
\address{Section de mathématiques, Université de Genève}
\email{Andras.Szenes@unige.ch}
\address{Section de mathématiques, Université de Genève}
\email{Olga.Trapeznikova@unige.ch}
\begin{document}	

\begin{abstract} 
Type-A toric varieties may be obtained as GIT quotients with respect to a torus action with weights corresponding to roots of the group $ SL(k) $ for some $ k>1 $. These varieties appear in various important applications, in particular, as normal cones to strata in  moduli spaces of vector bundles.
In this paper, we describe the intersection Betti numbers of these varieties, and those of some associated projective varieties. We present an elegant combinatorial model for these numbers, and, using the work of Hausel and Sturmfels, we show that the relevant intersection cohomology groups are endowed with a canonical product structure.
\end{abstract}

\maketitle

\section{Introduction}\label{sec:intro} 

One of the best ways to construct toric varieties is via Geometric 
Invariant Theory  
toric quotients of vector spaces.  Let $ 
\alpha_1,\dots,\alpha_n\in\mathrm{Hom}(T,\C^*) $ be $ n $ weights of a complex 
torus of dimension $ k-1 $, and let $ x_1,..., x_n $ be the  coordinates 
on $ \C^n $. Considering the weights as elements of an additive group, and 
using the exponential notation, we can write the corresponding diagonal action 
of an element $ q\in T $ as
\[    q\cdot (x_1,\dots, x_n)  \mapsto ( q^{\alpha_1}x_1,\dots,q^{\alpha_n} 
x_n).
 \]
The ring of $ T $-invariant polynomial functions on $ \C^n $ then is the ring 
of functions of an affine algebraic variety, which we denote by $X=X(\A,0)$, 
where  $ \A=[\alpha_1,\dots,\alpha_n ]$. 
In our constructions, we will always assume that the sequence
$ \A$ is unimodular, i.e. any $ (k-1) $-tuple of these 
weights is either linearly dependent, or has determinant $ \pm1 $ with respect 
to the lattice $\mathbb{Z}\A$ they generate. For the purposes of this introduction, 
we will consider the case when $ n= k(k-1) $, and $ \A $ is the root system of 
the group $ SL(k) $:
\[ \A = [ \alpha_{ij}=\varepsilon_i - \varepsilon_j|\; 1\le i\neq j\le k ]. 
\]
where $ \varepsilon_i $, $ i=1,\dots k $, are the "coordinate weights" of 
$ (\C^*)^k $ acting on $ \C^k$. 

A variant of this construction is obtained by fixing a weight $ \theta\in\mathbb{N}\A
$ and considering the graded algebra 
\[    \oplus_{j=1}^\infty S_{j\theta}, \]
where $  S_{j\theta}$ is the linear span of the monomials in $\C 
[x_1,..., x_n] $ of weight $ j\theta $. The corresponding variety $ 
X(\A,\theta) $ is smooth for generic $ \theta $, and projective over ${X}(\A,0)$, 
which means, in particular, that there is a canonical proper map 
$$ \varphi_\theta:X(\A,\theta) \to X(\A,0).$$ We note that for every $ 
\eta\in\mathbb{Z}\A  $, 
there is a well defined line bundle $ L_\eta $ on $ X(\A,\theta) $ for each $ \theta$, and  the line bundle 
$ L_\theta\to X(\A,\theta) $ is the polarization for the projective map $ \phi_\theta $.

We note that this variety appears in various contexts, in particular as a quiver variety \cite{Nakajima}, 
and plays an 
important role in the our calculation of the intersection cohomology of the 
moduli spaces of semistable bundles  \cite{MeinRein, MozgovoyRein}.

The quotient variety $ X $ is usually very singular, and the topological 
invariants most adapted to this situation are the intersection cohomology 
groups $ IH^*(X) $.  The central problem we address in this paper, is 
the calculation of the associated Poincar\'e polynomial 
\[   g(k,t)=  \sum_{i=0}^{[d/2]}g_it^i =  \sum_{i=0}^{[d/2]} \dim IH^{2i}(X)\cdot t^i, \]
where $d=n-k=k(k-2)$.

A related problem is the study of the topology of the projective toric variety 
$ \widehat{X}$,
associated to $ \A $ as follows. Let $ \kappa:\C^*\to\C^* $ be the tautological 
weight of the group $ \C^* $, set $ \hat T = T\times\C^* $, and consider 
the weight data $ \widehat{\A}= [\alpha+\kappa|\, \alpha\in \A] $ for the group $ \hat T $.
Then we set $ \widehat{X}=X(\widehat{\A},\kappa)$, and note that
the variety $ X $ is the cone over $ \widehat{X}$ 
associated to the line bundle $ L_\kappa $, i.e. it may be obtained as the 
total space of $ L_\kappa $ with the zero-section collapsed to a point. 

The properties of intersection cohomology imply that $ IH^*(\widehat{X}) $ 
satisfies Poincaré duality,
and thus the corresponding Poincar\'e polynomial
\[   h(k,t)=   \sum_{i=0}^d h_it^i = \sum_{i=0}^{d} \dim IH^{2i}(\widehat{X})\cdot t^i \]
is palindromic.

Our notation for these polynomials is motivated by the $ g$-and-$ h 
$-polynomial calculus of Stanley 
\cite{Stanley}, of which our polynomials are examples. This, in particular, 
implies that 
\begin{equation}\label{ghrelation}
	g_i = h_i - h_{i-1}\text{ for }  0\le i \le[d/2] .
\end{equation}

Let us describe contents of our paper. In \S\ref{sec:toric} we set the basic notation and describe the necessary concepts of the theory of toric varieties, while in \S\ref{sec:intcoh} give a very brief introduction to intersection cohomology.  Our key result, proved in is \S\ref{sec:zhepol} (cf. also \S\ref{S6.1}), is that for a generic $ 
\theta $, the canonical map $ \varphi_\theta:X(\A,\theta) \to X(\A,0)$ is \textit{small} 
(cf. Theorems \ref{volosataya} and \ref{allsmall}), and as a consequence,
the intersection cohomology group $ IH^*(X) $ is isomorphic to the 
cohomology of the fiber $ H^*(\varphi_\theta^{-1}(0)) $ as an additive group. Using this, 
we give a graph-theoretic interpretation of the coefficients of the polynomial
\[ g(k,t+1) = \hat{g}_0+ \hat{g}_1t+\hat{g}_2t^2+...+\hat{g}_{[d/2]} t^{[d/2]} \]
as follows.  
Let $ \mathcal{G}_{k} $ be the set of oriented graphs with vertex set 
$\{1, 2, ..., k\}$, and for $ G\in\mathcal{G}_{k} $,  denote by \begin{itemize}
\item  $e(G) $ the number 
of edges of $ G $, and by 
\item $ \mathrm{out}_G(m) $ the number of outgoing edges from the vertex $ m $ in $ G $.
\end{itemize}
 We show (cf. Theorem \ref{volosataya}) that 
\begin{multline*}
	 \hat{g}_i = |\{G\in\mathcal{G}_{k}|\;G \text{ is acyclic, connected, with 
	 } \\
e(G)=i+k-1,\,\mathrm{out}_G(1)=0,\, \mathrm{out}_G(m)>0\text { for }m>1\}|.
\end{multline*}
For calculating the polynomial $ g(k, t) $, we present the following recursion (cf. Proposition \ref{recgpoly}).
Let $p(m,t)=1+t+t^2+...+t^{m-1},$
and set $g(1,t)=1$. 
Then 
\begin{equation*}
g(k,t)= \sum_{\substack{J\subset\{2,...,k\} \\ J\neq\emptyset}}  (-1)^{|J|-1}p(k-|J|,t)^{|J|} \cdot g(k-|J|,t).
\end{equation*}

Note that   the function $ g $ determines the 
function $ h $ (cf. \eqref{ghrelation}). Nevertheless, going between the two 
functions is nontrivial, 
because of the truncation involved in the passage (cf. \cite{Stanley}).  
In  \S\ref{sec:hpol} we show that for $ k=3 $, the $ h $-polynomial coincides 
with that of the product of projective spaces (cf. Lemma \ref{hpoly3}), and then present 
an efficient recursion for the general case, which in a certain sense, produces 
a resolution of $ IH^*(\widehat{X}) $ in terms of such products.
We also give a graph-theoretic interpretation of the $ h $-polynomial (cf. Theorem \ref{hgraphs}): we show that 
\[ h(k,t+1) = \hat{h}_0+ \hat{h}_1t+\hat{h}_2t^2+...+\hat{h}_dt^d \] 
with 
  \begin{multline*}
	\hat{h}_i = |\{G\in\mathcal{G}_{k}|\;G \text{ has an oriented cycle,} 
	\, e(G)=i+k,\,
		\\ 
		\text{and there is a path to the vertex 1 from any other vertex} \}|,
\end{multline*}
and setting $h(1,t)=1$, we present a recursion (cf. Theorem \ref{main:h-poly}) for the $h$-polynomial:
\begin{equation*}
h(k,t)= p(k-1,t)^{k}-\sum_{\substack{ (\lambda_1,...,\lambda_s)\vdash \underline{k} \\ s\geq 2, |\lambda_i|\geq 2}} t^{\sum_{i<j}|\lambda_i|\cdot|\lambda_j|} \prod_{i=1}^sh(|\lambda_i|,t),
 \end{equation*}
where we denote by $\underline{k}$ the set $\{1,2,...,k\}$.

Finally, we note that our variety is an example of the Lawrence toric varieties
studied by Hausel and Sturmfels \cite{HSt}. They showed, in particular, 
that the 
cohomology rings of the varieties $\varphi^{-1}_\theta(0)$ 
are identical for different 
regular values of $ \theta $, even though these varieties are not, in general, 
all isomorphic. This allows us to define a canonical ring structure on the intersection cohomology 
$ IH^*(X) $  (cf. Theorem \ref{algrelations}).

\textbf{Acknowledgements.} We are grateful to Camilla Felisetti, Nicolas Hemelsoet,  G\'abor 
Hetyei and Tam\'as Hausel for useful discussions.
This research was supported by SNF grant S110100, and the NCCR SwissMAP.

\section{Preliminaries: toric varieties}\label{sec:toric}

This section contains the notation and basic facts from the theory of toric varieties. For more details we refer to \cite{Fulton, SzV}.

\subsection{The quotient construction}\label{S2.1}

 Let $\mathfrak{g}=\oplus_{i=1}^n\mathbb{R}\omega_i$ be a real vector space with a fixed ordered basis, and let 
\begin{equation}\label{seq:toric}
0\to\mathfrak{a}\to \mathfrak{g}\xrightarrow{B} \mathfrak{t}\to 0
\end{equation}
be an exact sequence of finite dimensional real vector spaces of dimensions $n-d, n$ and $d$. We denote by $\Gamma_{\mathfrak{g}} = \oplus_{i=1}^n\mathbb{Z}\omega_i$ the lattice in $\mathfrak{g}$, and assume that $\Gamma_{\mathfrak{g}}$ intersects $\mathfrak{a}$ in a lattice $\Gamma_{\mathfrak{a}}$ of full rank; we denote the image $B(\Gamma_{\mathfrak{g}})$ in $\mathfrak{t}$ by $\Gamma_{\mathfrak{t}}$. The sequence \eqref{seq:toric} restricted to the lattices is also exact, as well as the dual sequence
\begin{equation*}\label{seq:toricdual}
0\to\mathfrak{t}^*\to \mathfrak{g}^*\xrightarrow{A} \mathfrak{a}^*\to 0
\end{equation*}
restricted to the dual lattices $\Gamma_{\mathfrak{t}}^*, \Gamma_{\mathfrak{g}}^* $ and $\Gamma_{\mathfrak{a}}^*$. 
Denoting the dual basis by $\{\omega^i\}$, we have $\mathfrak{g}^*=\oplus_{i=1}^n\mathbb{R}\omega^i$ and $\Gamma_{\mathfrak{g}}^* = \oplus_{i=1}^n\mathbb{Z}\omega^i$. 

We introduce the notation $\alpha_i$ for the vector $A(\omega^i)\in 
\Gamma_{\mathfrak{a}}^*$ and consider the  sequence $\mathfrak{A} = [\alpha_1, 
..., \alpha_n$]. Note that according to our assumptions, the elements of 
$\mathfrak{A}$ generate $\Gamma_{\mathfrak{a}}^*$ over $\mathbb{Z}$.

The complexified torus $T_{\mathfrak{a}}=\mathrm{Hom}(\Gamma_\mathfrak{a}^*,\mathbb{C}^*)$ acts on $\mathbb{C}^n$ diagonally with weights $(\alpha_1, ..., \alpha_n)$. We will be interested in the quotients of this action in the sense of  geometric invariant theory. Let 
$$S = \mathbb{C}[x_1, ..., x_n], \,\,\,\,\,  \mathrm{deg}(x_i) = 
\alpha_i\in\Gamma_{\mathfrak{a}}^*$$ be the ring of polynomials graded by the 
semigroup $\mathbb{N}\A\subset\Gamma_{\mathfrak{a}}^*$. For $\theta \in 
\mathbb{N}\A$, we denote by $S_\theta$ the vector space of homogeneous 
polynomials in $ S $ of degree $\theta$. Then $S_0$ is a finitely generated 
subalgebra of homogeneous degree zero polynomials  and  $S_\theta$ is a module 
over $S_0$ for any $\theta\in\mathbb{N}\A$. 

We define the affine toric variety $X(\A,0)$ as the affine GIT quotient of $\mathbb{C}^n$ by the torus $T_{\mathfrak{a}}$ action:
$$ X(\A,0) := \mathbb{C}^n\!\sslash^0 T_{\mathfrak{a}}  = 
\mathrm{Spec}(S_0).$$   \label{X_0def}
For any $\theta \in \mathbb{N}\A$, we define the toric variety $X(\A,\theta)$ as the relative projective GIT quotient of $\mathbb{C}^n$ by the  $T_{\mathfrak{a}}$-action:
$$X(\A,\theta) := \mathbb{C}^n\sslash^\theta{T_{\mathfrak{a}}} : = \mathrm{Proj}(S_{(\theta)}),$$
where $S_{(\theta)}$ is the finitely generated $S_0$-algebra $\oplus_r t^r S_{r\theta}$, which is $\mathbb{N}$-graded by the degree of $t$.

\subsection{Gale duality and toric fans}\label{S2.2} 
Recall that any toric variety $X$ may be associated to a fan $\Sigma$ in the lattice $\Gamma_\mathfrak{t}\subset\mathfrak{t}$ in such a way that each cone $\sigma\in\Sigma$ corresponds to an affine subset of  $X$ (cf. \cite[\S1.4]{Fulton}). 
In this section we describe the fan which corresponds to the toric variety $X(\A,\theta)$ defined above (for the result, see Proposition \ref{fanforGIT}).

\subsubsection{Chambers in the $ \A $-picture}
We begin with the definition of some basic concepts related to our weight 
sequence $\A=[\alpha_1, ...,\alpha_n]$.

\begin{itemize}\label{bind}

\item For a set or sequence $S$ of vectors in a real vector space, denote by 
$\mathrm{Cone}(S)$ the closed cone spanned by the elements of $S$. 
 By convention, the cone over the empty set is the origin of the vector space.

\item Denote by $\mathrm{BInd}(\mathfrak{A})$ the set of \textit{basis index 
sets}, i.e. the set of those subsets $I \subset \{1, ..., n\}$ for which the 
set $\{\alpha_i\}_{i\in I}$ is a basis of $\mathfrak{a}^*$. We will use the 
notation $\alpha^I\subset \mathfrak{A}$ for the basis associated to $I\in 
\mathrm{BInd}(\mathfrak{A})$.

\item Denote by $ \partial\mathfrak{A} $ the union of the boundaries of the 
simplicial cones spanned by elements of $ \A $:
\[ \partial\mathfrak{A} = \cup\{\partial\cone(\alpha^I)|\,I\in 
\mathrm{BInd}(\mathfrak{A})\}. \]
  A  connected component of the open set $\cone(\A) \setminus 
  \partial\mathfrak{A}$ 
  in $\mathfrak{a}^*$ is called a \textit{chamber}, and the set of chambers will be denoted by $Ch(\A)$.

\item We will call $\theta\in\Gamma_\mathfrak{a}^*$  \textit{generic}, if it 
lies in one of the  chambers of $\A$.

\item For a chamber $\mathfrak{c}\in Ch(\A)$, we define $\mathrm{BInd}(\A, \mathfrak{c})$ to be the set of those $I\in \mathrm{BInd}(\mathfrak{A})$  for which $\cone(\alpha^I)\supset \mathfrak{c}$.

\end{itemize}

The sequence $\mathfrak{B}=[\beta_1,\dots,\beta_n]$, where be  
$\beta_i=B(\omega_i)$ in $\mathfrak{t}$ (cf. \eqref{seq:toric}) is called the 
\textit{Gale dual} of the sequence $\A$. This notion is involutive, i.e. the 
Gale dual of the sequence $\mathfrak{B}$ is $ \A $.

\subsubsection{Gale duality and fans}
A \textit{fan} $ \Sigma $ on $ \B $ is a finite collection of cones  of the form $\sigma= \cone(\beta^I) $, $ I\subset\{1,2,\dots,n\} $, satisfying some additional properties (cf. \cite[\S1.4]{Fulton}); in particular, the union of  ${\sigma\in\Sigma}$ is the cone $ \cone(\B)$.  A fan is \textit{simplicial} if all of its cones are simplicial; further, it is \textit{unimodular} if every maximal cone of $\Sigma$ is spanned by a basis of $\Gamma_\mathfrak{t}$.

There is a standard construction of toric varieties $\Sigma\mapsto X(\Sigma)$ 
from fans in the lattice $\Gamma_\mathfrak{t}$ (cf. \cite[\S1.4]{Fulton}). The 
toric variety $X(\Sigma)$ is a \textit{toric orbifold}, i.e. has only finite 
quotient singularities, if and only if $\Sigma$ is simplicial. In the 
unimodular case, simplicial fans give rise to smooth toric varieties.

The torus $T_{\mathfrak{t}} = \mathrm{Hom}(\Gamma_\mathfrak{t}^*, 
\mathbb{C}^*)$ is embedded in $X(\Sigma)$ as an open subset, and its 
standard action on itself extends to an action on $X(\Sigma)$.

\label{notation:toricorb}
\noindent\textbf{Notation:} The orbits of the $T_{\mathfrak{t}}$-action on 
$X(\Sigma)$ are in bijection with the cones $\sigma\in\Sigma$. Given 
$\sigma\in\Sigma$, we denote by $\mathcal{O}(\sigma)$ the corresponding orbit, 
and by $V(\sigma)$ the orbit closure of $\mathcal{O}(\sigma)$. Note that given $\sigma, \sigma'\in\Sigma$, 
$\sigma\subset\sigma'$ if and only if $V({\sigma'})\subset V(\sigma)$ and 
$\mathrm{dim}(\sigma)=\mathrm{codim}(V(\sigma)\subset X(\Sigma))$. 
\label{orbit-cone}

Next, we say that the fan $\Sigma_1$ is a \textit{refinement} of $\Sigma_2$ if for every cone $\sigma_1\in\Sigma_1$ there exists a cone $\sigma_2\in\Sigma_2$ such that $\sigma_1 \subset \sigma_2$. In this case, we can define a map $\widehat{\psi}_\Sigma: \Sigma_1\to \Sigma_2$ by setting $\widehat{\psi}_\Sigma(\sigma_1)$ to be the smallest cone in $\Sigma_2$ that contains $\sigma_1$, and this induces a so-called  \textit{toric} morphism $\psi: X(\Sigma_1) \to X(\Sigma_2)$.

The following lemma describes the fundamental relation between the Gale dual configurations $\A$ and $\B$. 
\begin{lemma}\label{Galebas}
A linear combination $\sum_i m_i\alpha_i$ vanishes if and only if there is a linear functional $l\in\mathfrak{t}^*$ such that $l(\beta_i)=m_i$.
\end{lemma}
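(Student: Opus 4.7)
The plan is to reduce everything to the exactness of the dual sequence
\[ 0 \to \mathfrak{t}^* \to \mathfrak{g}^* \xrightarrow{A} \mathfrak{a}^* \to 0 \]
and unwind the definitions of $\alpha_i$ and $\beta_i$ in terms of the dual basis. The statement really amounts to identifying $\ker A \subset \mathfrak{g}^*$ with the image of $B^*: \mathfrak{t}^* \to \mathfrak{g}^*$.

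First I would rewrite the linear combination as $\sum_i m_i \alpha_i = \sum_i m_i A(\omega^i) = A\bigl(\sum_i m_i \omega^i\bigr)$, so that the condition $\sum_i m_i \alpha_i = 0$ is equivalent to $\sum_i m_i \omega^i \in \ker A$. By exactness of the dual sequence, $\ker A$ equals the image of the inclusion $\mathfrak{t}^* \hookrightarrow \mathfrak{g}^*$, which is nothing but $B^*$, the transpose of $B$.

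Next I would translate the condition "$\sum_i m_i \omega^i = B^*(l)$ for some $l \in \mathfrak{t}^*$" into the statement about the coefficients $m_i$. Pairing with the basis vector $\omega_j \in \mathfrak{g}$ gives $m_j = \langle B^*(l), \omega_j\rangle = \langle l, B(\omega_j)\rangle = l(\beta_j)$, which is precisely the condition in the lemma. Conversely, if such an $l$ exists then $\sum_i m_i \omega^i = B^*(l) \in \ker A$, hence $\sum_i m_i \alpha_i = 0$.

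There is essentially no obstacle here: the lemma is a formal consequence of exactness and the definitions of $\A$ and $\B$ via the dual bases $\{\omega^i\}$ and $\{\omega_i\}$. The only thing to be careful about is making the dualization crisp — in particular that the map $\mathfrak{t}^* \to \mathfrak{g}^*$ in the dual sequence is indeed $B^*$, so that its matrix entries in the chosen bases are precisely the coordinates of the $\beta_i$'s in $\mathfrak{t}$.
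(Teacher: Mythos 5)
Your proof is correct, and since the paper states this lemma without proof (as a standard fact of Gale duality), the argument you give — identifying $\ker A$ with the image of $B^*$ via exactness of the dual sequence and then pairing against the basis $\{\omega_j\}$ — is exactly the intended formal justification. No gaps.
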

This relationship is instrumental in the proof of the following important statement.
\begin{proposition}\label{fanforGIT}
Let $\A$ be a sequence in $\mathfrak{a}^*$ and let $\mathfrak{c}$ be a chamber in $Ch(\A)$. 
\begin{enumerate}
\item 
 If $I\in \mathrm{BInd}(\A, \mathfrak{c})$, then its complement $\overline{I}=\{1,...,n\}\setminus I$ is an element of $\mathrm{BInd}(\B)$, i.e. the set $\beta^{\overline{I}}=\{\beta_i\}_{i\in\overline{I}}$ is a basis of $\mathfrak{t}$.
 \item The set of simplicial cones $\cone(\beta^{\overline{I}})$, $I\in 
 \mathrm{BInd}(\A, \mathfrak{c})$ forms a simplicial fan  
 $\Sigma(\mathfrak{c})$  on $\B$. In particular, the interiors of these cones 
 are disjoint and their union is $ \cone(\B) $. 
\item  Let $\theta\in \mathfrak{c}$, then the GIT quotient $X(\A,\theta)$ is the toric variety associated to the fan $\Sigma(\mathfrak{c})$.
\item The affine toric variety $X(\A,0)$ is associated to the fan with a single top-dimensional cone: 
 the convex polyhedral cone $\cone({\B})$ spanned by $\B$.
\end{enumerate}
\end{proposition}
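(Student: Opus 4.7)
The plan is to reduce every claim to Lemma~\ref{Galebas} together with the standard affine-chart description of $\mathrm{Proj}(S_{(\theta)})$ and the classical fan/Spec dictionary from \cite[\S1.4]{Fulton}. Parts (1) and (4) are essentially immediate, (3) is a standard translation once (2) is in hand, and the real geometric substance is concentrated in (2).

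For part (1), I argue by dimension count. Since $|I|=\dim\mathfrak{a}^*=n-d$, one has $|\bar I|=d=\dim\mathfrak{t}$, so it suffices to prove linear independence of $\beta^{\bar I}$. A nontrivial relation would produce a nonzero $l\in\mathfrak{t}^*$ with $l(\beta_i)=0$ for $i\in\bar I$; setting $m_i=l(\beta_i)$ and applying Lemma~\ref{Galebas}, we get $\sum_{i\in I} m_i\alpha_i=0$, and since $\alpha^I$ is a basis of $\mathfrak{a}^*$, all $m_i$ vanish. Hence $l$ is zero on the spanning set $\B$, a contradiction.

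For part (2), the three properties of a simplicial fan to verify are (a) disjointness of the open cones $\cone(\beta^{\bar I})^\circ$ for $I\in\mathrm{BInd}(\A,\mathfrak{c})$, (b) that their union equals $\cone(\B)$, and (c) that pairwise intersections are common faces. I expect (b) to be the central point. My plan is to fix $\theta\in\mathfrak{c}$ and, for generic $\beta\in\cone(\B)$, consider the polytope
$$Q_\beta=\{c\in\mathbb{R}_{\geq 0}^n:\sum_i c_i\beta_i=\beta\},$$
whose affine span is a translate of $\mathfrak{a}=\ker B$. On this affine span $\theta$ is a well-defined linear functional up to an additive constant, and by genericity it attains its minimum on $Q_\beta$ at a unique vertex $c^*$, whose support $\bar I$ has size $d$. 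The Farkas/KKT optimality conditions at $c^*$ yield nonnegative multipliers $\lambda_i$, $i\in I$, with $\theta=\sum_{i\in I}\lambda_i\alpha_i$, which says precisely that $I\in\mathrm{BInd}(\A,\mathfrak{c})$; simultaneously $\beta\in\cone(\beta^{\bar I})^\circ$ since $c_i^*>0$ for all $i\in\bar I$. Uniqueness of $c^*$ gives (a), existence of $c^*$ for every generic $\beta$ gives (b), and (c) follows because every face of $\cone(\beta^{\bar I})$ is spanned by the supports of vertices adjacent to $c^*$, which also belong to $\mathrm{BInd}(\A,\mathfrak{c})$ by a local chamber argument. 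The chamber bookkeeping here is the step I expect to require the most care.

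For part (3), $X(\A,\theta)=\mathrm{Proj}(S_{(\theta)})$ is covered by affine opens $D_+(x^m)$ for $x^m\in S_{r\theta}$; Lemma~\ref{Galebas} applied to the degree-zero part of $S[(x^m)^{-1}]$ identifies it with the semigroup algebra $\C[\cone(\beta^{\bar I})^\vee\cap\Gamma_\mathfrak{t}^*]$, where $\bar I$ is read off from $m$; the non-basic charts are absorbed into basic ones, and the gluing matches that of $\Sigma(\mathfrak{c})$. Finally, for (4), $S_0$ is by construction the semigroup algebra of $\mathbb{N}^n\cap\mathfrak{t}^*=\cone(\B)^\vee\cap\Gamma_\mathfrak{t}^*$, so $X(\A,0)=\mathrm{Spec}(S_0)$ is tautologically the affine toric variety of the single-cone fan $\{\cone(\B)\}$.
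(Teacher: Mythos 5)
The paper states Proposition~\ref{fanforGIT} without proof: it is a classical fact about toric GIT quotients (the Cox quotient / secondary-fan picture), and the paper only records Lemma~\ref{Galebas} as the combinatorial ingredient such a proof would rest on. So there is no paper argument to compare yours against; I assess your reconstruction on its own terms.

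Parts (1), (3), (4) are correct as written: (1) is a dimension count plus a clean application of Lemma~\ref{Galebas}, and (3)--(4) correctly translate the degree-zero pieces of the localizations of $S$ into semigroup algebras of dual cones via the same lemma. The substance is in part (2), and there your idea is right but two points are left genuinely open. First, boundedness: you minimize $\theta$ over $Q_\beta$ as if it were a polytope, but $Q_\beta$ is bounded only when $\cone(\A)=\mathfrak{a}^*$ (its recession cone is $\mathfrak{a}\cap\mathbb{R}^n_{\geq 0}=\cone(\A)^\vee$); in general you must argue that $\theta$ is bounded below on $Q_\beta$, which does hold precisely because $\theta\in\cone(\A)$. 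Second, and more seriously, your treatment of the face condition (c) --- ``faces are spanned by supports of vertices adjacent to $c^*$'' --- describes cones \emph{adjacent} to $\cone(\beta^{\overline I})$, not its faces, and ``a local chamber argument'' is not a proof. Both gaps close cleanly if you dualize: the cones $\cone(\beta^{\overline I})$, $I\in\mathrm{BInd}(\A,\mathfrak{c})$, are exactly the maximal cones of the inner normal fan of the fiber polyhedron $\Delta_\theta=\{m\in\mathbb{R}^n_{\geq 0}:\sum_i m_i\alpha_i=\theta\}$, which sits in an affine translate of $\mathfrak{t}^*\subset\mathfrak{g}^*$. The inner normal to the facet $\{m_i=0\}$ of $\Delta_\theta$ is $\beta_i\in\mathfrak{t}$; the vertex supports for generic $\theta\in\mathfrak{c}$ are precisely the $I\in\mathrm{BInd}(\A,\mathfrak{c})$; the recession cone of $\Delta_\theta$ is $\cone(\B)^\vee$, so the normal fan has support $\cone(\B)$; and normal fans are automatically simplicial fans satisfying the face condition. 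This packaging also gives (3) with essentially no extra work, since the toric variety attached to the normal fan of $\Delta_\theta$ is $\mathrm{Proj}$ of its homogeneous coordinate ring, which is visibly $S_{(\theta)}$.
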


\begin{remark}\label{canonocalmor} Recall that for every scheme $X$ there is a canonical morphism
$$\psi: X\to X_0$$ 
to the affine scheme $X_0 = \mathrm{Spec}(H^0(X,\mathcal{O}_X))$ of regular 
functions on $X$. This map for $X(\A,\theta)$ is the canonical map to 
$X(\A,0)$, and, in the unimodular case, this is a resolution of singularities.
\end{remark}

\subsubsection{Polar polytopes}\label{S2.3}

Finally, we note that there is a simple way to associate a fan, and thus a  
toric variety, to a convex polytope. 

Let $P\subset\mathfrak{t}$ be a  \textit{rational} convex polytope, i.e. 
such that an integral multiple of its vertices $ \{v_1,v_2,\dots,v_N\} $ lie in 
the lattice $ \Gamma_\mathfrak{t} $. For simplicity, first, we will assume that $ 
\sum_{i=1}^{N}v_i=0 $. 
Then we denote by $\Sigma_P$ the fan whose cones are the cones over the proper
faces of $P$ (here, we include the empty face). Following the notation from above, we denote by $X(\Sigma_P)$ the associated toric variety. If the center of mass of $ P $ is not at the origin, then we replace the polytope $P$ by its shifted copy $P- 
\frac1N\sum_{i=1}^{N}v_i$. \label{projectivefan}

 More generally, if $\widetilde{P}$ is a subdivision of the boundary of $P$, 
 i.e. $\widetilde{P}$ is a collection of convex polytopes whose union is the 
 boundary of $P$, and the intersection of two polytopes in $\widetilde{P}$ is a 
 polytope in $\widetilde{P}$ (cf. Figure \ref{fig:ch(A)}), then the cones over 
 the polytopes in $\widetilde{P}$ form a fan, which we will also denote by 
 $\Sigma_{\widetilde{P}}$.
Then $\Sigma_{\widetilde{P}}$ is a refinement of $\Sigma_{{P}}$, which induces 
a toric morphism $X(\Sigma_{\widetilde{P}})\to X(\Sigma_{{P}})$.
Note that while $X(\Sigma_P)$ is projective, the variety $ 
X(\Sigma_{\widetilde{P}}) $ is not necessarily projective.

\section{Intersection cohomology}\label{sec:intcoh}
In this section, we collect a few basic facts about small morphisms and the 
intersection cohomology of toric varieties, needed in our paper.  
\subsection{Small maps}\label{S3.1}

Let $ \widetilde{Y}$ be a connected nonsingular  
variety, and $\psi: \widetilde{Y}\to Y$ be a proper surjective map onto a 
variety $ Y $ of the same dimension. 
A \textit{stratification} for $\psi$  is a decomposition of $Y$ into finitely many 
locally closed nonsingular subsets $Y = \bigsqcup_{k=0}^n S_k$ such that 
$\psi^{-1}(S_k) \to S_k$ is a topologically locally trivial fibration. The subsets 
$S_k$ are called \textit{strata}.

Denote by $d_k:=\mathrm{dim}(\psi^{-1}(y_k))$ the dimension of the fiber of $ \psi $ 
over any point $y_k\in S_k$. The map $\psi$ is called \textit{small}, if
\begin{equation}\label{small}
\mathrm{codim}(S_k) =  \mathrm{dim}(Y) - \mathrm{dim}(S_k) > 2d_k
\end{equation}
 for every nondense stratum $ S_k $ for the map $\psi$. 

 Small maps play an important role in the calculation of the 
 \textit{intersection cohomology} groups of singular varieties \cite{GM}. 
 In general, the intersection cohomology $IH^*(Y)$ of an irreducible complex 
 projective $d$-dimensional variety $Y$ is a module over the singular 
 cohomology ring $H^*(Y)$, and satisfies Poincar\'e duality and the Hard 
 Lefschetz theorem \cite[Theorem 5.4.10]{BBD}. The latter means that for some 
 element $\omega\in H^2(Y)$ (the class of a hyperplane section) and for $0\leq 
 k\leq d$ the multiplication by $\omega^{d-k}$ mapping $IH^k(Y) \to 
 IH^{2d-k}(Y))$  is an isomorphism of vector spaces. One can define then,  for 
 $0\leq 
 k\leq d$,  the \textit{primitive intersection cohomology} of $Y$ 
 as $$IH_{\mathrm{prim}}^k(Y) =  IH^{2k}(Y)/ \omega\, IH^{2k-2}(Y).$$

The intersection cohomology of toric varieties is intimately related to the 
theory 
of convex polytopes \cite{dCMdec, Stanley}. We will now briefly review this relation.

\subsection{Intersection cohomology of toric varieties}
Let $P$ be a simplicial $d$-dimensional polytope, and denote by $f_i$ the 
number of its $i$-dimensional faces. One associates to $ P $ its 
\textit{$f$-polynomial} 
$$f(P,t) = t^d + f_0t^{d-1}+f_1t^{d-2}+ ... +f_{d-2}t + f_{d-1},$$  its 
\textit{$h$-polynomial} 
\begin{equation}\label{h-poly}
h(P,t) = h_dt^d+...+h_1t+h_0 := f(P,t-1),
\end{equation}
and its \textit{$g$-polynomial} 
\begin{equation}\label{g-poly} 
	g(P,t) = h_0+(h_1-h_0)t+(h_2-h_1)t^2+...+(h_{[d/2]}-h_{[d/2]-1})t^{[d/2]}.
\end{equation}

The following theorem calculates the dimension of the intersection cohomology groups of a toric orbifold.
\begin{proposition}\cite[\S5.2]{Fulton}\label{Poincar\'eh-poly}
Let $P$ be a simplicial $d$-dimensional rational polytope with $h$-polynomial $h(P, t) = \sum_{k=0}^d 
h_k t^k$, and let $X(\Sigma_P)$ be the associated toric orbifold. Then 
\begin{equation}\label{smoothhpol}
	\mathrm{dim}H^{2k}(X(\Sigma_P)) = h_k  \text{\,\,\, and \,\,\,} 
	H^{2k+1}(X(\Sigma_P)) = 0 \text{\,\,\, for \,\,\,} 0\leq k \leq d. 
\end{equation}
\end{proposition}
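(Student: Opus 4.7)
The plan is to compute $H^*(X(\Sigma_P))$ directly from the fan structure using the torus-orbit stratification, and then match the result to the combinatorial definition $h(P,t)=f(P,t-1)$.

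First, I would establish that the rational cohomology of $X(\Sigma_P)$ is pure and vanishes in odd degrees. Since $P$ is simplicial, the fan $\Sigma_P$ is simplicial, so $X(\Sigma_P)$ is a complete toric orbifold, and its rational cohomology agrees with that of a smooth model. Choose a generic one-parameter subgroup $\lambda:\mathbb{C}^*\to T_{\mathfrak t}$ so that $X(\Sigma_P)^\lambda$ equals the $T_{\mathfrak t}$-fixed locus; this locus is finite and in bijection with the maximal cones of $\Sigma_P$, i.e.\ with the facets of $P$. The Białynicki--Birula decomposition then expresses $X(\Sigma_P)$ as a disjoint union of affine cells, one per fixed point, showing in particular that odd rational cohomology vanishes and that the mixed Hodge structure is pure of Hodge--Tate type.

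Next, with purity and evenness in hand, I would invoke additivity of the $E$-polynomial along the stratification of $X(\Sigma_P)$ by its $T_{\mathfrak t}$-orbits. Each orbit $\mathcal{O}(\sigma)$ is isomorphic to $(\mathbb{C}^*)^{d-\dim\sigma}$, whose $E$-polynomial in the variable $s=t^2$ is $(s-1)^{d-\dim\sigma}$. Under purity this gives
\begin{equation*}
  P(X(\Sigma_P),t) \;=\; \sum_{\sigma\in\Sigma_P}(t^2-1)^{d-\dim\sigma}.
\end{equation*}
Now I would use the bookkeeping of $\Sigma_P$: its cones are the cone over the empty face (the origin, one $0$-dimensional cone) together with the cones over the proper faces of $P$, so for $k\ge 1$ the number of $k$-dimensional cones equals $f_{k-1}$. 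Substituting $s=t^2$,
\begin{equation*}
  \sum_{\sigma\in\Sigma_P}(s-1)^{d-\dim\sigma} \;=\; (s-1)^d + \sum_{k=1}^{d} f_{k-1}\,(s-1)^{d-k} \;=\; f(P,s-1) \;=\; h(P,s),
\end{equation*}
by definition \eqref{h-poly}. Reading off coefficients yields $\dim H^{2k}(X(\Sigma_P))=h_k$ and the vanishing of odd cohomology.

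The principal obstacle is justifying the cohomological inputs, evenness/purity and the additivity formula, in the orbifold setting rather than the smooth one. For strictly smooth projective toric varieties both are classical consequences of the Białynicki--Birula cell decomposition; for simplicial but non-unimodular fans one must additionally argue that rational cohomology is unaffected by the finite stabilizers, or equivalently lift to a smooth toric resolution and descend. Once this technical point is granted, the remainder of the argument is a one-line combinatorial identity.
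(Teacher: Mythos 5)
The paper offers no proof of Proposition~\ref{Poincar\'eh-poly}; it is simply cited from Fulton~\S5.2. Your argument is a correct reconstruction of what is, in essence, Fulton's own computation (carried out in \S4.5 for smooth complete fans and extended to the simplicial case over $\mathbb{Q}$ in \S5.2), repackaged in the more modern language of $E$-polynomial additivity. The two ingredients you use — evenness and Hodge--Tate purity of $H^*(X(\Sigma_P);\mathbb{Q})$, obtained from a Bia{\l}ynicki--Birula cell decomposition for a generic $\mathbb{C}^*$-subgroup, and additivity of the $E$-polynomial over the torus-orbit stratification $X(\Sigma_P)=\bigsqcup_\sigma\mathcal{O}(\sigma)$ with $\mathcal{O}(\sigma)\cong(\mathbb{C}^*)^{d-\dim\sigma}$ — then yield $P(X(\Sigma_P),t)=\sum_\sigma(t^2-1)^{d-\dim\sigma}=f(P,t^2-1)=h(P,t^2)$, which is exactly~\eqref{smoothhpol}. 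The combinatorial bookkeeping (one zero-dimensional cone, $f_{k-1}$ cones of dimension $k$ for $1\le k\le d$) is correct, as is the observation that $X(\Sigma_P)$ is projective since $\Sigma_P$ is the normal fan of the polar dual polytope, so the BB decomposition is available.

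The one point you flag — justifying purity and evenness in the simplicial orbifold setting — is indeed the only technical content, and it is handled in the literature exactly as you suggest: either by noting directly that simplicial toric varieties are rational homology manifolds whose $\mathbb{Q}$-cohomology behaves as in the smooth case, or by taking a smooth toric resolution $\widetilde{X}\to X(\Sigma_P)$ and using that the induced map on rational cohomology is split injective. One small economy worth noting: once you have the BB cells you do not strictly need the $E$-polynomial detour at all, since each cell is an affine space and the Betti numbers can be read off as $\dim H^{2k}=\#\{\sigma\text{ maximal}: \dim\text{(cell at }x_\sigma)=k\}$, after which the identification with $h_k$ is a purely combinatorial inclusion--exclusion; that is closer to Fulton's original presentation, but your route lands in the same place.
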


\begin{remark}
	Note that it follows from \eqref{smoothhpol} and the definition 
	\eqref{g-poly}, that the coefficient $h_k-h_{k+1}$ is the dimension of the 
	primitive cohomology of the toric variety $X(\Sigma_P)$ in degree $k$, and 
	thus, in particular, it is non-negative.
\end{remark}

When the polytope $P$ is not simplicial, the corresponding toric variety 
$X(\Sigma_P)$ has not only finite quotient singularities. In this case, the 
ordinary cohomology groups is not a purely combinatorial invariant, but depends 
also on some geometric data of the polytope \cite{McConnel}. A better invariant 
to consider is the intersection cohomology of $X(\Sigma_P)$.  
Then the "generalized" $h$-polynomial $h(P, t) = 
\sum\mathrm{dim}IH^{2k}(X(\Sigma_P))t^k$ is a purely combinatorial invariant, 
i.e. it can be again defined from the face lattice of the polytope $P$.  

For a simplicial polytope $P$, we have $H^*(X(\Sigma_P))=IH^*(X(\Sigma_P))$, 
and hence, in this case,  the generalized $h$-polynomial of $P$ coincides with 
the one defined in \eqref{h-poly}. Now, following Stanley \cite{Stanley}, we give combinatorial definitions of the 
$h$ and $g$ polynomials for a not necessarily simplicial polytope.

Let $P$ be a $d$-dimensional polytope and suppose that the $h$ and $g$ polynomials have been defined for all convex polytopes of dimension less than $d$. We set
\begin{equation}\label{h-polyARB}
h(P, t) = \sum_{F<P}g(F,  t)(t-1)^{d-1-\mathrm{dim}(F)},
\end{equation}
where the sum runs over all proper faces $F$ of $P$, including the empty face 
$\emptyset$, for which $g(\emptyset,t) = h(\emptyset,t) = 1$ and 
$\mathrm{dim}(\emptyset) = -1$. The polynomial $g(P,t)$ is defined from the 
polynomial $h(P,t)$ as in \eqref{g-poly}. Formulas \eqref{g-poly} and 
\eqref{h-polyARB} then inductively define the polynomials $ g $ and $ h $ for 
all polytopes.

\begin{remark}
Note that this definition agrees with  definitions given in  \eqref{h-poly} and \eqref{g-poly}, since the $g$-polynomial of any simplex equals $1$.
\end{remark}

The following theorem calculates the dimension of the intersection cohomology groups of a toric variety.

\begin{theorem}\cite{Fiesler}\label{IChandg}
Let $P$ be a $d$-dimensional rational polytope and let $X(\Sigma_P)$ be the toric variety, associated to the fan $\Sigma_P$. Then 
\begin{enumerate}[(i)]
\item $h(P, t) = \sum_{k=0}^{d} \mathrm{dim}IH^{2k}(X(\Sigma_P))t^k \text{\,\,\, and \,\,\,}  IH^{2k+1}(X(\Sigma_P))=0  \text{\,\, for\,\, } 0\leq k \leq d;$
\vspace{3pt}
\item $g(P,t) = \sum_{k=0}^{[d/2]} \mathrm{dim}IH_{\mathrm{prim}}^k(X(\Sigma_P))t^k.$
\end{enumerate}
\end{theorem}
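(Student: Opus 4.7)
The plan is to prove the theorem by induction on $d = \dim P$, using the Beilinson--Bernstein--Deligne--Gabber decomposition theorem applied to a toric partial resolution. The base cases $d \leq 1$ are immediate, since then $X(\Sigma_P)$ is either a point or $\mathbb{P}^1$, and the combinatorial recursion \eqref{h-polyARB} trivially reproduces the (intersection) Betti numbers. Note also that for a simplicial $P$ the statement reduces to the already-established formula \eqref{smoothhpol} for the ordinary cohomology of a toric orbifold, since $IH^* = H^*$ in the orbifold case; this anchors the induction whenever we pass to a smooth approximation.

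For the inductive step, I would pick a simplicial subdivision $\widetilde{P}$ of $\partial P$ refining its face structure and form the associated refined fan $\widetilde{\Sigma}_P$, producing a proper birational toric morphism $\pi : Y := X(\widetilde{\Sigma}_P) \to X(\Sigma_P)$ with $Y$ a toric orbifold whose Betti numbers are known from the simplicial case. The decomposition theorem then gives
\[
R\pi_* IC_Y \;\cong\; IC_{X(\Sigma_P)} \;\oplus\; \bigoplus_{\sigma} IC_{V(\sigma)} \otimes L_\sigma ,
\]
where $\sigma$ runs over the non-dense cones of $\Sigma_P$ and each $L_\sigma$ is a graded vector space encoding multiplicities and shifts. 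Each orbit closure $V(\sigma)$ is itself the toric variety associated to the face $F_\sigma$ of $P$ dual to $\sigma$, whose dimension is strictly less than $d$; thus by induction its intersection Betti numbers are the coefficients of $h(F_\sigma, t)$.

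Taking Poincar\'e polynomials of both sides and stratifying by orbit, one obtains a recursion expressing the polynomial $\sum_k \dim IH^{2k}(X(\Sigma_P))t^k$ in terms of the $h$-polynomials of lower-dimensional faces. A local calculation over a generic point of each orbit $\mathcal{O}(\sigma)$---where the fiber of $\pi$ is itself a lower-dimensional toric variety whose primitive cohomology encodes $g(F_\sigma, t)$---identifies the multiplicity contribution of $F_\sigma$ precisely with $g(F_\sigma, t)(t-1)^{d-1-\dim F_\sigma}$, so that the recursion matches \eqref{h-polyARB}. Vanishing in odd degrees follows from the cellular decomposition of $Y$ into even-dimensional $T_{\mathfrak{t}}$-orbits together with the parity-preserving nature of the decomposition theorem. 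Part (ii) is then a direct consequence of part (i), the Hard Lefschetz theorem for $IH^*$, and the definition \eqref{g-poly} of the $g$-polynomial. The principal difficulty is the combinatorial identification of the local multiplicities $L_\sigma$ with $g(F_\sigma, t)$: this is the essence of Stanley's local $h$-vector calculus and requires a careful analysis of the stalks of $IC_{X(\Sigma_P)}$ along each orbit, including the nonobvious fact that the contribution of a face $F$ is truncated precisely at degree $[\dim F/2]$.
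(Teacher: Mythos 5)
The paper does not actually prove this statement; it cites it as Theorem~\ref{IChandg} from Fieseler~\cite{Fiesler}, so there is no in-paper argument to compare against. Judged on its own terms, your sketch follows the general architecture of the modern BBD-based treatments of toric intersection cohomology, but it is not a proof: the step you yourself flag as ``the principal difficulty''---identifying the local systems $L_\sigma$ with the combinatorial $g$-polynomials of the faces---is precisely where the entire content of the theorem lives, and you leave it unresolved. Writing the decomposition as $R\pi_*IC_Y \cong IC_{X(\Sigma_P)}\oplus\bigoplus_\sigma IC_{V(\sigma)}\otimes L_\sigma$ and ``matching \eqref{h-polyARB}'' presupposes that you already know $\dim IH^{2k}$ of $X(\Sigma_P)$ and its orbit closures, which is what you set out to compute; the argument as given is circular unless one supplies an independent computation of the stalks of $IC_{X(\Sigma_P)}$ (e.g.\ via the Deligne construction applied to the local cone structure of a toric variety over each orbit, which is how Fieseler proceeds without invoking BBD at all).

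There is also a localized error in the ``local calculation'' step: the fiber of $\pi$ over a generic point of $\mathcal{O}(\sigma)$ is controlled by the refinement of the cone $\sigma$ inside $\widetilde{\Sigma}_P$, not by the face $F_\sigma$ of $P$ dual to $\sigma$, and it is the \emph{link} of the orbit (the projective toric variety of the face), not the fiber, whose primitive intersection cohomology gives $g(F_\sigma,t)$ after truncation at degree $[\dim F_\sigma/2]$. This conflation matters: the truncation comes from the support/cosupport conditions on $IC_{X(\Sigma_P)}$ along the orbit stratification, not from the geometry of $\pi$. Finally, the assertion that the BBD summands are supported exactly on orbit closures $V(\sigma)$ should be justified (it follows from $T_\mathfrak{t}$-equivariance of $\pi$ and of the decomposition, but you must say so). As written, the proposal is a plausible plan whose hard kernel is acknowledged but not carried out.
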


Finally, we recall the following basic consequence of the \textit{decomposition 
theorem} \cite{BBD} applied to a small resolution of singularities of toric 
varieties.
\begin{theorem}\cite{Mauri}\label{dec}
 Let $X(\Sigma_{\widetilde{P}})$ be a $d$-dimensional simplicial toric variety 
 and let $\psi: X(\Sigma_{\widetilde{P}})\to X(\Sigma_P)$ be a small toric 
 morphism. Given a face $F<P$, we pick a point $y_F$ in the 
 $T_\mathfrak{t}$-orbit 
 $\mathcal{O}(\sigma)$, where $ \sigma $ is the cone over the face $F$ in the fan 
 $\Sigma_P$. Then
$$H^{2k}(\psi^{-1}(y_F)) \simeq IH_{\mathrm{prim}}^k(X(\Sigma_F)) \text{\,\, for\,\, }  0\leq k\leq [d/2].$$
In particular, for $F=P$, we have $y_P=0$ and $H^{2k}(\psi^{-1}(0)) \simeq IH_{\mathrm{prim}}^k(X(\Sigma_P)).$
\end{theorem}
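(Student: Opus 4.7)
My proof would combine the BBD decomposition theorem for the small morphism $\psi$ with a local computation of the intersection cohomology sheaf of $X(\Sigma_P)$ at the torus orbits.

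First I would invoke the decomposition theorem. Because $X(\Sigma_{\widetilde{P}})$ is simplicial, hence a rational homology manifold, its rational IC sheaf is $\underline{\mathbb{Q}}_{X(\Sigma_{\widetilde{P}})}[d]$, and smallness of $\psi$ forces all non-top BBD summands to vanish, giving
\begin{equation*}
R\psi_*\, \underline{\mathbb{Q}}_{X(\Sigma_{\widetilde{P}})}[d] \;\cong\; IC(X(\Sigma_P)).
\end{equation*}
Properness identifies the stalk of the left-hand side at $y_F$ with $H^*(\psi^{-1}(y_F))[d]$, so the theorem reduces to computing the stalk cohomology of $IC(X(\Sigma_P))$ at $y_F \in \mathcal{O}(\sigma_F)$.

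Second I would compute this IC stalk via the local structure of the toric variety. An analytic neighborhood of the orbit $\mathcal{O}(\sigma_F)$ in $X(\Sigma_P)$ splits as $\mathcal{O}(\sigma_F) \times X_{\sigma_F}$, where $X_{\sigma_F}$ is the affine toric variety of the star fan at $\sigma_F$. Because $\Sigma_P$ is the fan over the faces of $P$, the piece $X_{\sigma_F}$ is, topologically, the affine cone over $X(\Sigma_F)$. The cone formula for IC sheaves then expresses the stalk at the apex as the truncation of $IH^*(X(\Sigma_F))$ below the middle degree. Invoking Hard Lefschetz on $X(\Sigma_F)$ to decompose $IH^*(X(\Sigma_F))$ into Lefschetz-primitive summands, this truncation reorganizes, degree by degree, into $IH^k_{\mathrm{prim}}(X(\Sigma_F))$, producing the desired isomorphism.

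The most delicate point is this second step: a careful identification of the transverse slice with an affine cone over $X(\Sigma_F)$, and the Lefschetz bookkeeping that passes from the cone formula to primitive cohomology. A purely numerical alternative, which yields only the equality of dimensions, is combinatorial: a compactly-supported Euler calculation via the stratification of $X(\Sigma_{\widetilde{P}})$ by preimages of torus orbits gives
\begin{equation*}
h(\widetilde{P}, t) \;=\; \sum_{F<P} (t-1)^{d-1-\dim F}\, \sum_k \dim H^{2k}(\psi^{-1}(y_F))\, t^k,
\end{equation*}
while the decomposition theorem gives $h(\widetilde{P}, t) = h(P, t)$. Comparing with Stanley's identity \eqref{h-polyARB} and iterating the argument on the restricted small maps over orbit closures forces $\sum_k \dim H^{2k}(\psi^{-1}(y_F))\, t^k = g(F, t)$, which by Theorem \ref{IChandg}(ii) equals $\sum_k \dim IH^k_{\mathrm{prim}}(X(\Sigma_F))\, t^k$.
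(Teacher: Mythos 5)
The paper does not prove Theorem \ref{dec}: it is cited from \cite{Mauri} and used as a black box, so there is no internal proof to compare against. Your argument is nonetheless a correct and essentially standard derivation of the statement. The first route (decomposition theorem for the small, birational map from a rational homology manifold, giving $R\psi_*\underline{\mathbb{Q}}[d]\cong IC(X(\Sigma_P))$; proper base change to identify the stalk at $y_F$ with $H^*(\psi^{-1}(y_F))$; local product structure $\mathcal{O}(\sigma_F)\times U_{\sigma_F}$ with $U_{\sigma_F}$ the affine toric variety of $\cone(F)$ viewed as a full-dimensional cone in its span, which is indeed the affine cone over $X(\Sigma_F)$; cone formula plus Hard Lefschetz) is exactly the mechanism behind this result, and you correctly flag the delicate Lefschetz bookkeeping at the end. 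To make that step fully precise: the cone formula expresses the stalk as the low-degree intersection cohomology of the link, a circle bundle over $X(\Sigma_F)$; the Gysin sequence together with Hard Lefschetz (injectivity of $\cup\,\omega$ below middle degree, hence vanishing of the connecting map there) identifies it with $IH^{2k}(X(\Sigma_F))/\omega\,IH^{2k-2}(X(\Sigma_F)) = IH^k_{\mathrm{prim}}(X(\Sigma_F))$ for $k\le[\dim F/2]$.

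Your ``numerical alternative'' is also viable but as written has a small indexing slip: the additive $E$-polynomial computation over preimages of torus orbits must run over all cones of $\Sigma_P$, i.e.\ over all faces $F\le P$ including $F=P$ (whose orbit is the fixed point at the apex), not only the proper faces. With that correction, and comparing with Stanley's recursion \eqref{h-polyARB} after using $h(\widetilde P,t)=h(P,t)$, one peels off the equality $\sum_k\dim H^{2k}(\psi^{-1}(y_F))\,t^k=g(F,t)$ by induction on $\dim F$; the induction should proceed through the transverse slices (the local affine cones $U_{\cone(F)}$ with their induced small resolutions), not through the orbit closures $V(\cone(F))$, since the fiber $\psi^{-1}(y_F)$ lives in the normal direction to the stratum, not inside $V(\cone(F))$.
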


\section{The $g$-polynomial of the Gale dual type-A root polytope}\label{sec:zhepol}
In this section, we study certain generalized Gale dual type-A root polytopes. 
The 
main result of this section, Theorem \ref{volosataya}, calculates the 
$g$-polynomials of these polytopes as generating functions of the number of a 
certain oriented graphs, graded by the number of their edges. 
Proposition \ref{recgpoly} presents a recursive formula, which is an effective 
tool for the calculation of these same $g$-polynomials.  
\subsection{The type-A root polytope}\label{S2.3}
The real vector space
$$\ak=\mathbb{R}^k/\mathbb{R}(1,...,1).$$
has a natural pairing with
$$\ak^*=\{(\varepsilon_1,...,\varepsilon_k)\in\mathbb{R}^k \, | \,\, 
\varepsilon_1+...+\varepsilon_k=0\},$$
where $(\varepsilon_1,...,\varepsilon_k)$ are the coordinates on 
$\mathbb{R}^k$.

Let $\Gamma_{\ak}^*$ be the integer lattice in the vector space $\ak^*$:
$$\Gamma_{\ak}^* = \{(\lambda_1,...,\lambda_k)\in\mathbb{Z}^k \, | \,\, 
\lambda_1+...+\lambda_k=0\}.$$
For $1\leq i\neq j\leq k$, we define the element $\alpha_{ij} = 
\varepsilon_i-\varepsilon_j \in\Gamma_{\ak}^*$, and we set
$$\Phi_k = \{\alpha_{ij} \, | \,\, 1\leq i\neq j\leq k\},$$ which is the set of 
roots of the $A_{k-1}$ root system. 

The \textit{type-A root polytope} is defined as the convex hull of the set of 
roots $\Phi_k$ in the vector space $\ak^*$. The Gale transformation converts 
the set $\Phi_k$ of $k(k-1)$ root 
vectors in $\ak^*$ into the set of $k(k-1)$ 
vectors in an appropriate $(k-1)^2$-dimensional space, which we will denote by $\Psi_k$. 
The \textit{Gale dual type-A root polytope} is the convex hull of the set of 
vectors from $\Psi_k$.

\begin{example}\label{ex:prism}
The Gale dual type-A root polytope, corresponding to the root system $A_2$, is a three-dimensional prism shown in Figure \ref{fig1}.
\end{example}
\begin{figure}[H]
\centering
\begin{tikzpicture}[scale=1]
\path [draw=black, thick] (0,2) edge (3,2);
\path [draw=black, thick] (3,2) edge (1,1.2);
\path [draw=black, thick] (0,2) edge (1,1.2);
\path [draw=black, thick, dashed] (0,0.2) edge (3,0.2);
\path [draw=black, thick] (0,0.2) edge (1,-0.6);
\path [draw=black, thick] (1,-0.6) edge (3,0.2);
\path [draw=black, thick] (0,2) edge (0,0.2);
\path [draw=black, thick] (3,2) edge (3,0.2);
\path [draw=black, thick] (1,1.2) edge (1, -0.6);
\draw [fill] (1,-0.6) circle [radius=0.02];
\node [below] at (1,-0.55) {\tiny $\beta_{12}$};
\draw [fill] (0,0.2) circle [radius=0.02];
\node [below] at (0,0.2) {\tiny $\beta_{31}$};
\draw [fill] (3,0.2) circle [radius=0.02];
\node [below] at (3,0.2) {\tiny $\beta_{23}$};
\draw [fill] (1,1.2) circle [radius=0.02];
\node [right] at (0.9,1.1) {\tiny $\beta_{21}$};
\draw [fill] (0,2) circle [radius=0.02];
\node [left] at (0,2) {\tiny $\beta_{13}$};
\draw [fill] (3,2) circle [radius=0.02];
\node [right] at (3,2) {\tiny $\beta_{32}$};
\end{tikzpicture}
\vskip 14pt
\setlength{\belowcaptionskip}{-8pt}\caption{The Gale dual root polytope, corresponding to the root system $A_2$.}\label{fig1}
\end{figure}
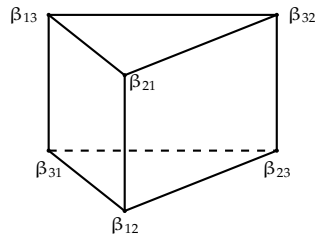

\begin{remark}
The Gale dual type-A polytope is a particular case of a \textit{Lawrence 
polytope}  \cite{HSt}.
\end{remark}

In this paper, we will study the Gale dual root polytope, obtained from the type-A root system with multiplicities. For this, we fix integers  $r_{ij}\in\mathbb{N}$ for $ 1\leq i< j\leq k$, set $r_{ji}=r_{ij}$ and consider the ordered sequence 
\begin{equation}\label{A}
\A(r_{12},r_{13},...,r_{k-1\,k}) = 
[\alpha_{12},...,\alpha_{12},\alpha_{21},...,\alpha_{21},\alpha_{13},...,\alpha_{13},...,\alpha_{k\,
 k-1}]\in\Gamma_{\ak}^*
\end{equation}
of the root vectors $\alpha_{ij}\in\Phi_k$, where the vector $\alpha_{ij}$ is 
repeated $r_{ij}$ times. Following the notation from page \pageref{seq:toric}, 
we set 
$$n:=|\A| = 2\sum_{1\leq i< j\leq k}r_{ij} \text{\,\, and \,\,} d: = n-k+1.$$
We consider the Gale dual sequence $\B(r_{12},...,r_{k-1 k})$ and denote by 
$\Pi(r_{12},...,r_{k-1 k})$ the polytope obtained as the convex hull of the 
vectors in $\B(r_{12},...,r_{k-1 k})$. It follows from Lemma \ref{Galebas}, 
that similarly to the root configuration, this dual configuration is not full 
dimensional: $\Pi(r_{12},...,r_{k-1 k})$ is a $d-1$-dimensional polytope in a 
$d$-dimensional vector space.

We can also deduce from Lemma \ref{Galebas} that the polytope $\Pi(r_{12},...,r_{k-1 k})$ does not contain the origin in its interior;
in fact, the toric variety $X(\A(r_{12},...,r_{k-1 k}), 0)$ (cf. page \pageref{X_0def}) is an affine cone over the singular projective toric variety $X(\Sigma_{\Pi(r_{12},...,r_{k-1 k})})$ (cf. Proposition \ref{fanforGIT}).

\noindent\textbf{Notation:} The fan corresponding to the toric variety $X(\A(r_{12},...,r_{k-1 k}), 0)$ consists of the cones over the faces of $\Pi$, including $\Pi$ itself; we denote the cone over the face $F<\Pi$ by $\cone(F)$.

\begin{example}\label{k=2space}
A simple calculation shows that for $k=2$  the polytope $\Pi(r_{12})$ is a product of two $(r_{12}-1)$-dimensional simplices, and thus the toric variety $X(\A(r_{12}), 0)$ is an affine cone over the product of projective spaces $\mathbb{P}^{r_{12}-1}\times\mathbb{P}^{r_{12}-1}$.
\end{example}

\subsection{Small maps and the combinatorics of the $g  $-polynomial}\label{S4.2}
Before we  formulate the main result of this section, Theorem \ref{volosataya}, 
we introduce some extra notation related to graphs. 
\begin{itemize}
\item We denote by $\mathbb{K}_k=\mathbb{K}_k(r_{12},...,r_{k-1 k})$ the directed graph with vertex set $\{1, 
2,...,k\}$ and with number of oriented edges from $i$ to $j$ equal to 
$r_{ij}$.  
 We will use the notation $\overleftarrow{ij}$ for the edge directed from $j$ 
 to $i$. 

\item Denote by  $\mathbb{G}_k=\mathbb{G}_k(r_{12},...,r_{k-1 k})$ the graph obtained from $\mathbb{K}_k$  by 
deleting all edges $\overleftarrow{i1}$ for $1<i\leq k$. To emphasize the break 
of 
symmetry: we will color the first vertex in $\mathbb{G}_k$ in red,
and the other vertices in black.

\item A directed graph is \textit{acyclic} if it has no directed cycles. 

\item A directed graph is \textit{naked} if every edge of this graph is contained in at least one directed cycle.

\item We will say that a directed graph $G$ is rooted at the $i^{\text{th}}$ 
vertex if there is a directed path in $G$ from any vertex to the 
$i^{\text{th}}$ vertex. 
\end{itemize}

\begin{theorem}\label{volosataya} Let $\Pi = \Pi(r_{12},...,r_{k-1 k})$ be the Gale dual type-A root polytope defined above, and let $\hat{d}=\sum_{i<j} r_{ij} - k+1$. Then 
$$g(\Pi, t+1) = 
\hat{g}_{0}+t\hat{g}_1+t^2\hat{g}_{2}+...+t^{\hat{d}}\hat{g}_{\hat{d}},$$ where 
$\hat{g}_{i}$ counts the number of acyclic subgraphs $G\subset \mathbb{G}_k$ with $k$ vertices and $k-1+i$ edges, which are rooted at the first vertex.
\end{theorem}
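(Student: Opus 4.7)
The plan is to identify the coefficients of $g(\Pi,t+1)$ with the Hodge--Deligne E-polynomial of the central fiber of a small toric resolution, and then match the torus orbits of this fiber with acyclic rooted subgraphs of $\mathbb{G}_k$ via Gale duality. I proceed in three steps.

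\emph{Reduction to fiber cohomology.} Fix a generic $\theta$ in the chamber $\mathfrak{c}$ containing the ray spanned by $\sum_{j>1}\alpha_{j1}$. A direct flow argument on spanning trees shows that $\mathrm{BInd}(\A,\mathfrak{c})$ consists precisely of the spanning in-arborescences of $\mathbb{K}_k$ rooted at vertex~$1$ (together with a choice of edge copy): expanding $\theta$ in the basis $\alpha^T$ of a spanning tree $T$ yields the subtree sizes under the chosen orientation, and these are positive iff every edge of $T$ points toward $1$. Granted the smallness of $\varphi_\theta$ (Theorem~\ref{allsmall}), Theorems~\ref{dec} and \ref{IChandg}(ii) combine to give $g(\Pi,t)=\sum_i \dim H^{2i}(F_0)\,t^i$, where $F_0=\varphi_\theta^{-1}(0)$.

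\emph{Bijection with rooted acyclic subgraphs.} The preimage $F_0$ is the union of torus orbits $\mathcal{O}(\tau)$ for $\tau\in\Sigma^{\circ}(\mathfrak{c})$, i.e.\ those cones $\tau=\cone(\beta^S)$ of $\Sigma(\mathfrak{c})$ whose relative interior meets $\mathrm{int}\,\cone(\B)$. By Lemma~\ref{Galebas}, the condition $\tau\not\subset\partial\cone(\B)$ is equivalent to the absence of a nontrivial nonnegative relation $\sum m_i\alpha_i=0$ supported in $\overline{S}=\{1,\dots,n\}\setminus S$; since such relations correspond to directed cycles in the multigraph $\mathbb{K}_k$, this reads: $\overline{S}\subset\mathbb{K}_k$ is acyclic. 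The condition $\cone(\beta^S)\in\Sigma(\mathfrak{c})$ means $\overline{S}$ contains some $I\in\mathrm{BInd}(\A,\mathfrak{c})$, i.e.\ a spanning in-arborescence at~$1$; together with acyclicity this forces $\overline{S}\subset\mathbb{G}_k$ (any edge of $\overline{S}$ out of $1$ would close a cycle with the arborescence), and conversely any acyclic subgraph of $\mathbb{G}_k$ rooted at $1$ contains such an arborescence by selecting one outgoing edge per non-root vertex. This yields a bijection $\tau\mapsto\overline{S}$ between $\Sigma^{\circ}(\mathfrak{c})$ and acyclic subgraphs of $\mathbb{G}_k$ rooted at~$1$, with $|S|=\dim\tau$ and $e(\overline{S})=n-\dim\tau$.

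\emph{E-polynomial and conclusion.} Each orbit $\mathcal{O}(\tau)\cong(\C^*)^{d-\dim\tau}$, with $d=n-k+1$, has E-polynomial $(uv-1)^{d-\dim\tau}$; additivity on the stratification gives
\[ e(F_0;u,v)=\sum_{\tau\in\Sigma^{\circ}(\mathfrak{c})}(uv-1)^{d-\dim\tau}=\sum_{i\ge 0}\hat{g}_i\,(uv-1)^i, \]
since $d-\dim\tau=e(\overline{S})-(k-1)=i$ when $\overline{S}$ has $k-1+i$ edges. By Theorem~\ref{dec}, $H^{2k}(F_0)\cong IH^k_{\mathrm{prim}}(X(\Sigma_\Pi))$ is pure of Hodge type $(k,k)$ because $X(\Sigma_\Pi)$ is projective, and odd cohomology of $F_0$ vanishes for the same reason; hence $e(F_0;u,v)=P(F_0;uv)$. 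Combining with the first step,
\[ g(\Pi,t+1)=P(F_0,t+1)=\sum_{i\ge 0}\hat{g}_i\,t^i. \]
The main obstacles are establishing smallness for this specific $\theta$ (deferred to Theorem~\ref{allsmall}) and the purity step used to pass from $e(F_0;u,v)$ to $P(F_0,t)$; both rest on the decomposition theorem. The combinatorial heart, the bijection between interior cones and rooted acyclic subgraphs together with the matching of cone codimension and edge-count excess over $k-1$, follows directly from Gale duality.
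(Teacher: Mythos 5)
Your proposal is correct and follows the same overall strategy as the paper: prove that the canonical map from a toric resolution $\widetilde{X}=X(\A,\theta_1)$ to $X=X(\A,0)$ is small, invoke the decomposition theorem (Theorem~\ref{dec}) together with Theorem~\ref{IChandg}(ii) to identify $g(\Pi,t)$ with the Poincar\'e polynomial of the central fiber, and then translate the combinatorics of the fan refinement into rooted acyclic subgraphs of $\mathbb{G}_k$ via Gale duality (your bijection between interior cones $\tau\in\Sigma^\circ(\mathfrak{c})$ and acyclic subgraphs $\overline{S}\subset\mathbb{G}_k$ is precisely the content of the paper's Lemma~\ref{lemma:trees}, Corollary~\ref{conedimcount}, and the $F=\Pi$ case of Lemma~\ref{fibergraphs}).

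The one place you genuinely diverge is in computing the Poincar\'e polynomial of $F_0=\varphi_\theta^{-1}(0)$. The paper cites a formula of de Cataldo--Migliorini--Mustata (Theorem~\ref{Poincar\'eFib}) giving $\sum_l\dim H^{2l}(F_0)\,t^{2l}=\sum_l d_l(F_0)(t^2-1)^l$ directly; you instead compute the Hodge--Deligne E-polynomial of $F_0$ by additivity over the torus-orbit stratification and then pass to the Poincar\'e polynomial using purity. Your route is a bit more self-contained in that it re-derives the cited formula in this special case, but it silently invokes more machinery than you state: you need that the isomorphism $H^{2k}(F_0)\cong IH^k_{\mathrm{prim}}(X(\Sigma_\Pi))$ from Theorem~\ref{dec} is one of Hodge structures (which holds in the mixed-Hodge-module formulation of the decomposition theorem) and that $IH^*$ of a projective toric variety is pure of Hodge--Tate type; both are true but should be cited, since Theorem~\ref{dec} as stated gives only a linear isomorphism. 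With those references supplied, the purity step is airtight and the two arguments are essentially equivalent.

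One small slip: you place $\theta$ in the chamber containing $\sum_{j>1}\alpha_{j1}$. With the paper's conventions ($\alpha_{ij}=\varepsilon_i-\varepsilon_j$, and $\alpha_{ij}\leftrightarrow$ the edge directed from $j$ to $i$), this is $-\theta_1$, not $\theta_1=\sum_{j>1}\alpha_{1j}=(k-1,-1,\dots,-1)$; in that chamber the basis index sets would correspond to \emph{out}-arborescences from $1$, not in-arborescences. Since your subsequent analysis (in-arborescences, acyclic subgraphs of $\mathbb{G}_k$) tracks $\theta_1$, this is just a sign typo and does not propagate, but it should be fixed.
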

\begin{example} 
For $\Pi = \Pi(1,1,1)$ from Example \ref{ex:prism} we have $\hat{g}_0=3$ and $\hat{g}_1=2$ (cf. Figure \ref{fig:g-poly}), hence $g(\Pi, t) = 3+2(t-1)=1+2t$. 
\end{example}
\begin{figure}[H]
\begin{minipage}{.19\textwidth}
\centering
\begin{tikzpicture}[scale=1.1]
\path [bend right, draw=black, thick] (1,0.6) edge node[currarrow, pos=0.5, xscale=-1, sloped] {} (0,0);
\path [bend right, draw=black, thick] (1,-0.6) edge node[currarrow, pos=0.5, sloped] {} (1,0.6);
\draw [red, fill] (0,0) circle [radius=0.05];
\draw [fill] (1,0.6) circle [radius=0.05];
\draw [fill] (1,-0.6) circle [radius=0.05];
\node [above] at (0,0) {\tiny $1$};
\node [right] at (1,0.6) {\tiny $2$};
\node [right] at (1,-0.6) {\tiny $3$};
\end{tikzpicture}
\end{minipage}
\begin{minipage}{.19\textwidth}
\centering
\begin{tikzpicture}[scale=1.1]
\path [bend left, draw=black, thick] (1,-0.6) edge node[currarrow, pos=0.5, xscale=-1, sloped] {} (0,0);
\path [bend left, draw=black, thick] (1,0.6) edge node[currarrow, pos=0.5, sloped] {} (1,-0.6);
\draw [red, fill] (0,0) circle [radius=0.05];
\draw [fill] (1,0.6) circle [radius=0.05];
\draw [fill] (1,-0.6) circle [radius=0.05];
\node [above] at (0,0) {\tiny $1$};
\node [right] at (1,0.6) {\tiny $2$};
\node [right] at (1,-0.6) {\tiny $3$};
\end{tikzpicture}
\end{minipage}
\begin{minipage}{.19\textwidth}
\centering
\begin{tikzpicture}[scale=1.1]
\path [bend right, draw=black, thick] (1,0.6) edge node[currarrow, pos=0.5, xscale=-1, sloped] {} (0,0);
\path [bend left, draw=black, thick] (1,-0.6) edge node[currarrow, pos=0.5, xscale=-1, sloped] {} (0,0);
\draw [red, fill] (0,0) circle [radius=0.05];
\draw [fill] (1,0.6) circle [radius=0.05];
\draw [fill] (1,-0.6) circle [radius=0.05];
\node [above] at (0,0) {\tiny $1$};
\node [right] at (1,0.6) {\tiny $2$};
\node [right] at (1,-0.6) {\tiny $3$};
\end{tikzpicture}
\end{minipage}
\begin{minipage}{.19\textwidth}
\centering
\begin{tikzpicture}[scale=1.1]
\path [bend right, draw=black, thick] (1,0.6) edge node[currarrow, pos=0.5, xscale=-1, sloped] {} (0,0);
\path [bend left, draw=black, thick] (1,-0.6) edge node[currarrow, pos=0.5, xscale=-1, sloped] {} (0,0);
\path [bend right, draw=black, thick] (1,-0.6) edge node[currarrow, pos=0.5, sloped] {} (1,0.6);
\draw [red, fill] (0,0) circle [radius=0.05];
\draw [fill] (1,0.6) circle [radius=0.05];
\draw [fill] (1,-0.6) circle [radius=0.05];
\node [above] at (0,0) {\tiny $1$};
\node [right] at (1,0.6) {\tiny $2$};
\node [right] at (1,-0.6) {\tiny $3$};
\end{tikzpicture}
\end{minipage}
\begin{minipage}{.19\textwidth}
\centering
\begin{tikzpicture}[scale=1.1]
\path [bend right, draw=black, thick] (1,0.6) edge node[currarrow, pos=0.5, xscale=-1, sloped] {} (0,0);
\path [bend left, draw=black, thick] (1,-0.6) edge node[currarrow, pos=0.5, xscale=-1, sloped] {} (0,0);
\path [bend left, draw=black, thick] (1,0.6) edge node[currarrow, pos=0.5, sloped] {} (1,-0.6);
\draw [red, fill] (0,0) circle [radius=0.05];
\draw [fill] (1,0.6) circle [radius=0.05];
\draw [fill] (1,-0.6) circle [radius=0.05];
\node [above] at (0,0) {\tiny $1$};
\node [right] at (1,0.6) {\tiny $2$};
\node [right] at (1,-0.6) {\tiny $3$};
\end{tikzpicture}
\end{minipage}
\vskip 14pt
\setlength{\belowcaptionskip}{-8pt}\caption{Acyclic subgraphs of $\mathbb{G}_3$ that contain a spanning tree rooted at the first vertex: there are $3$ graphs with two edges and $2$ graphs with three edges.}\label{fig:g-poly}
\end{figure}
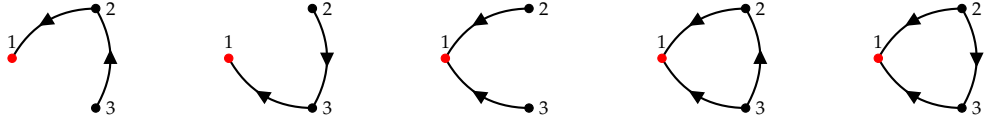

The strategy of the proof of Theorem \ref{volosataya} is as follows. First, we construct a toric resolution of the singularities of the affine variety $X(\A(r_{12},...,r_{k-1 k}), 0)$, and
introduce some graph-theoretic tools based on the fact that weights of the 
$T_\mathfrak{a}$-action correspond to edges of an oriented complete graph. 
Next, we prove that this resolution is \textit{small} (cf. \S\ref{S3.1}). The  
Decomposition 
Theorem then allows us 
to identify the intersection cohomology of $X(\A(r_{12},...,r_{k-1 k}), 0)$ 
with the cohomology of the fibers of this small resolution. Finally, we 
describe the cohomology of the fibers using generating functions for the number 
of certain oriented graphs.

\begin{lemma}\cite[Lemma 4.12]{HW}\label{cham_1}
The simplicial cone $\cone(\{\alpha_{12},\alpha_{13},...,\alpha_{1k}\})$ is a 
chamber in $Ch(\A(r_{12},...,r_{k-1 k}))$.
\end{lemma}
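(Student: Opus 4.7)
The plan is to verify directly that the open interior of $\cone(\alpha_{12}, \dots, \alpha_{1k})$ is a connected component of $\cone(\A(r_{12},\dots,r_{k-1\,k})) \setminus \partial\A$. Since $\{\alpha_{1j}\}_{j=2}^{k}$ is a basis of $\ak^*$, the cone is simplicial and full-dimensional, and its $k-1$ facets automatically lie in $\partial\A$ as facets of the top-dimensional simplicial cone $\cone(\alpha^I)$ with $I = \{1j : j=2,\dots,k\} \in \mathrm{BInd}(\A)$. The remaining task is therefore to show that no interior point lies in $\partial\A$, i.e., in $\cone(\gamma_1,\dots,\gamma_{k-2})$ for any choice of $k-2$ linearly independent roots $\gamma_l$.

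The key structural step is a characterization of the \emph{walls}: every $(k-2)$-dimensional subspace of $\ak^*$ spanned by $k-2$ linearly independent roots has the form
\[
H_S = \Bigl\{\theta \in \ak^* \,:\, \sum_{i \in S} \theta_i = 0\Bigr\}
\]
for some proper nonempty subset $S \subset \{1, \dots, k\}$. Indeed, any nonzero linear functional $\ell \in \ak$ vanishing on $\alpha_{ij} = \varepsilon_i - \varepsilon_j$ forces $\ell_i = \ell_j$, and the codimension-one constraint forces the resulting equivalence relation on $\{1,\dots,k\}$ to have exactly two classes, which are precisely $S$ and its complement.

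It then suffices to check that any $\theta = \sum_{j=2}^{k} c_j\alpha_{1j}$ with all $c_j > 0$ avoids every $H_S$. This reduces to an elementary sign computation: the coordinates satisfy $\theta_1 = \sum_j c_j > 0$ and $\theta_j = -c_j < 0$ for $j\geq 2$, so $\sum_{i\in S}\theta_i$ is strictly positive when $1\in S$ (using that $S$ is proper) and strictly negative when $1\notin S$ (using that $S$ is nonempty). The main obstacle is really the wall characterization, since chambers are defined by the arrangement of the cones $\cone(\alpha^I)$ rather than directly by an arrangement of hyperplanes; I would sanity-check it by observing that the roots contained in each $H_S$ form a sub-root-system of type $A_{|S|-1}\times A_{k-|S|-1}$ of rank $(|S|-1)+(k-|S|-1)=k-2$, confirming that each $H_S$ genuinely arises as the span of some $k-2$ linearly independent roots.
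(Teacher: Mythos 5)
The proposal is a correct, self-contained, elementary proof of a statement the paper does not prove itself but simply cites from Winkler's thesis \cite[Lemma 4.12]{HW}; so there is no internal proof to compare against, and your argument supplies one. The reduction is right: since $\partial\A$ is by definition a union of boundaries of simplicial cones $\cone(\alpha^I)$, $I\in\mathrm{BInd}(\A)$, and each facet $\cone(\alpha^{I\setminus\{i\}})$ is spanned by $k-2$ linearly independent roots, it suffices to show the open cone avoids every linear span of $k-2$ independent roots. Your wall characterization — that these spans are exactly the $H_S=\{\theta:\sum_{i\in S}\theta_i=0\}$ for proper nonempty $S$ — is correct: a nonzero $\ell\in\ak$ with $\ell_i=\ell_j$ whenever $\alpha_{ij}$ lies in the annihilated subspace partitions $\{1,\dots,k\}$ into level sets, and the resulting sub-root-system has rank $k$ minus the number of parts, so rank $k-2$ forces exactly two parts, and your rank count $(|S|-1)+(k-|S|-1)=k-2$ confirms each such $H_S$ really occurs. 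The sign computation $\theta_1=\sum_j c_j>0$, $\theta_j=-c_j<0$ for $j\ge2$ then cleanly shows $\sum_{i\in S}\theta_i\neq 0$ in both cases. Two small points you leave implicit but which are harmless: the argument requires $r_{1j}\ge 1$ so that $\{1j:j=2,\dots,k\}\in\mathrm{BInd}(\A)$ (which the paper's setup assumes), and one should note the standard topological fact that an open connected subset of $\cone(\A)\setminus\partial\A$ whose frontier lies in $\partial\A$ is automatically a connected component, which is what upgrades ``contained in $\cone(\A)\setminus\partial\A$'' to ``is a chamber.''
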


Note that $\theta_1 = (k-1,-1,...,-1)\in 
\cone(\{\alpha_{12},\alpha_{13},...,\alpha_{1k}\})$,  and consider the 
corresponding toric variety
$X(\A(r_{12},...,r_{k-1 k}), \theta_1)$. As our fan is unimodular, this variety is smooth.

\begin{example}
Below is the chamber complex for the root system $A_2$ and the triangulation of the Gale dual root polytope $\Pi(1,1,1)$ (cf. Example \ref{ex:prism}) given by the chamber $\{\alpha_{12}, \alpha_{13}\}$.
\end{example}

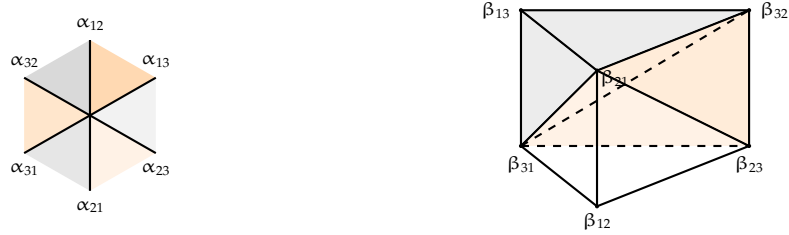
\begin{figure}[H]
\begin{minipage}{.49\textwidth}
\centering
\begin{tikzpicture}[scale=1]
\fill [fill=orange!30, ] (0,1) -- (0,0) -- ({sqrt(3/4)},0.5);
\fill [fill=gray!30, ] (0,1) -- (0,0) -- (-{sqrt(3/4)},0.5);
\fill [fill=gray!10, ] ({sqrt(3/4)},0.5) -- (0,0) -- ({sqrt(3/4)},-0.5);
\fill [fill=orange!10, ] ({sqrt(3/4)},-0.5) -- (0,0) -- (0,-1);
\fill [fill=gray!20, ] (-{sqrt(3/4)},-0.5) -- (0,0) -- (0,-1);
\fill [fill=orange!20, ] (-{sqrt(3/4)},-0.5) -- (0,0) -- (-{sqrt(3/4)},0.5);
\path [draw=black, thick]  (-{sqrt(3/4)},-0.5) edge  ({sqrt(3/4)},0.5);
\path [draw=black, thick]  ({sqrt(3/4)},-0.5) edge  (-{sqrt(3/4)},0.5);
\path [draw=black, thick]  (0,-1) edge  (0,1);
\draw [fill] (0,0) circle [radius=0.02];
\node [above] at (0,1) {\tiny $\alpha_{12}$};
\node [below] at (0,-1) {\tiny $\alpha_{21}$};
\node [above] at ({sqrt(3/4)},0.5) {\tiny $\alpha_{13}$};
\node [below] at ({sqrt(3/4)},-0.5) {\tiny $\alpha_{23}$};
\node [above] at (-{sqrt(3/4)},0.5) {\tiny $\alpha_{32}$};
\node [below] at (-{sqrt(3/4)},-0.5) {\tiny $\alpha_{31}$};
\end{tikzpicture}
\end{minipage}
\begin{minipage}{.49\textwidth}
\centering
\begin{tikzpicture}[scale=1]
\draw [ fill=lightgray!30] (0,0.2) -- (1,1.2) -- (0,2);
\draw [ fill=lightgray!30] (3,2) -- (1,1.2) -- (0,2);
\draw [ fill=lightgray!30] (3,2) -- (1,1.2) -- (0,0.2);
\draw [ fill=orange!15] (3,0.2) -- (1,1.2) -- (3,2);
\draw [ fill=orange!10] (3,0.2) -- (1,1.2) -- (0,0.2);
\path [draw=black, thick] (0,2) edge (3,2);
\path [draw=black, thick] (3,2) edge (1,1.2);
\path [draw=black, thick] (0,2) edge (1,1.2);
\path [draw=black, thick, dashed] (0,0.2) edge (3,0.2);
\path [draw=black, thick] (0,0.2) edge (1,-0.6);
\path [draw=black, thick] (1,-0.6) edge (3,0.2);
\path [draw=black, thick] (0,2) edge (0,0.2);
\path [draw=black, thick] (3,2) edge (3,0.2);
\path [draw=black, thick] (1,1.2) edge (1, -0.6);
\path [draw=orange, thick] (1,1.2) edge (3,0.2);
\path [draw=orange, thick] (0,0.2) edge (1,1.2);
\path [draw=orange, thick, dashed] (0,0.2) edge (3,2);
\draw [fill] (1,-0.6) circle [radius=0.02];
\node [below] at (1,-0.55) {\tiny $\beta_{12}$};
\draw [fill] (0,0.2) circle [radius=0.02];
\node [below] at (0,0.2) {\tiny $\beta_{31}$};
\draw [fill] (3,0.2) circle [radius=0.02];
\node [below] at (3,0.2) {\tiny $\beta_{23}$};
\draw [fill] (1,1.2) circle [radius=0.02];
\node [right] at (0.9,1.1) {\tiny $\beta_{21}$};
\draw [fill] (0,2) circle [radius=0.02];
\node [left] at (0,2) {\tiny $\beta_{13}$};
\draw [fill] (3,2) circle [radius=0.02];
\node [right] at (3,2) {\tiny $\beta_{32}$};
\end{tikzpicture}
\end{minipage}
\vskip 14pt
\setlength{\belowcaptionskip}{-8pt}\caption{The chamber complex for the root system $A_2$ and the triangulation of the Gale dual root polytope given by the chamber $\{\alpha_{12}, \alpha_{13}\}$.}\label{fig:ch(A)}
\end{figure}
To simplify the notation, from now on, we omit the dependence on $m, 
r_{12},...,r_{k-1 k}$:  
throughout this chapter, we will use the notation
\begin{itemize}\label{omitnot}
\item $\A:=\A(r_{12},...,r_{k-1 k})$;
\item $\Pi := \Pi(r_{12},...,r_{k-1 k})$;
\item $X:=X(\A(r_{12},...,r_{k-1 k}), 0)$;
\item  $ \widetilde{X}:= X(\A(r_{12},...,r_{k-1 k}), \theta_1)$ where $\theta_1= (k-1,-1,...,-1)$;
\item $\Sigma$ and $\widetilde{\Sigma}$ for the toric fans of $X$ and $\widetilde{X}$, respectively;
\item $\varphi:  \widetilde{X} \to X$ for the canonical morphism defined in Remark \ref{canonocalmor}; clearly, $\varphi$ is a toric morphism compatible with the refinement of fans $\widehat{\varphi}_\Sigma: \tilS\to \Sigma$.
\end{itemize}

It is well known (cf. e.g. \cite{Yau}) that the fiber of the toric morphism 
$\varphi:  \widetilde{X} \to X$ over a point $x\in X$ depends only on the orbit 
$\mathcal{O}(\sigma)\subset X$ (cf. page \pageref{notation:toricorb}) that contains $x$. We thus note  that the decomposition $$X = \bigsqcup_{F<\Pi} 
\mathcal{O}(\cone(F)),$$ where $F$ runs over the faces of $\Pi$, including  the 
empty face and $\Pi$ itself, is a stratification (cf. \S\ref{S3.1}) for the 
canonical morphism $\varphi:\widetilde{X}\to X$.

Now we are ready to formulate our main technical result, whose proof will be the contect of the following paragraph.

\begin{theorem}\label{smallmor}
The canonical morphism $\varphi:  \widetilde{X} \to X$ is small (cf. 
\S\ref{S3.1}). 
\end{theorem}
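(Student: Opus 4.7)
First, I would translate smallness into fan combinatorics. Because $\widehat\varphi_\Sigma:\widetilde\Sigma\to\Sigma$ is a refinement on the common lattice $\Gamma_\mathfrak{t}$, the map $\varphi$ is birational and the standard description of fibres of a toric morphism shows that, for $y_F\in\mathcal{O}(\cone(F))$,
$$\dim\varphi^{-1}(y_F)=\dim\cone(F)-\dim\tilde\tau_F,$$
where $\tilde\tau_F$ is a cone of smallest dimension in $\widetilde\Sigma$ whose relative interior lies in $\cone(F)^\circ$. Since $\mathrm{codim}\,\mathcal{O}(\cone(F))=\dim\cone(F)$, the smallness condition \eqref{small} is equivalent to
$$2\dim\tilde\tau_F>\dim\cone(F)\quad\text{for every non-empty face }F\leq\Pi.$$

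Next, I would render both sides of this inequality in graph-theoretic terms. Every ray of $\widetilde\Sigma$ is of the form $\mathbb{R}_{\geq 0}\beta_e$ for an edge $e$ of the multigraph $\mathbb{K}_k$. Combining Proposition \ref{fanforGIT} with Lemma \ref{cham_1} and the classical identification of bases of the $A_{k-1}$ root system with spanning trees of $K_k$, the set $\mathrm{BInd}(\A,\mathfrak{c})$ is identified with the set of spanning arborescences of $\mathbb{K}_k$ rooted at vertex $1$; consequently every cone of $\widetilde\Sigma$ is of the form $\cone(\beta^S)$ for $S\subset E(\mathbb{K}_k)$ a subset of the complement of some such arborescence. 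On the other side, by Lemma \ref{Galebas}, each face $F\leq\Pi$ is cut out by a linear functional on $\mathfrak{t}$ that vanishes on an edge set $T_F\subset E(\mathbb{K}_k)$, and $\dim\cone(F)$ equals $|T_F|$ minus the rank of the relations among $\{\alpha_e:e\in T_F\}$.

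The heart of the argument is to describe $\tilde\tau_F$ explicitly inside $T_F$ and then verify the inequality. Minimality forces the generating edge set $S_F\subseteq T_F$ of $\tilde\tau_F$ to be a smallest collection whose positive span still covers $\cone(F)^\circ$; combining the cone-generation criterion of the previous step with this minimality, $S_F$ is seen to be an acyclic subgraph on the vertices involved in $T_F$, rooted at the distinguished vertex $1$ in the sense of Theorem \ref{volosataya}. Applying Lemma \ref{Galebas} to count independent relations, a direct edge-count then produces $2\dim\tilde\tau_F>\dim\cone(F)$.

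The main obstacle I anticipate is precisely this last combinatorial step. One must first pin down which edge sets $T_F\subset E(\mathbb{K}_k)$ arise as faces of the Lawrence-type polytope $\Pi$ (its face lattice is intricate because of the multiplicities $r_{ij}$) and then isolate the minimal rooted acyclic subgraph $S_F$ inside each $T_F$. A naive dimension count yields only the non-strict bound $2\dim\tilde\tau_F\geq\dim\cone(F)$; the strict inequality demanded by \eqref{small} relies essentially on the asymmetric role of vertex $1$, namely on the fact that $\mathfrak{c}=\cone(\alpha_{12},\ldots,\alpha_{1k})$ distinguishes this vertex as the root of all relevant arborescences.
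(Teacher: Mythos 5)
Your reduction of smallness to the cone‑dimension inequality is sound: for a toric morphism the fibre dimension over a point of $\mathcal{O}(\cone(F))$ is indeed $\dim\cone(F)-\min\{\dim\sigma:\widehat\varphi_\Sigma(\sigma)=\cone(F)\}$, and since $\mathrm{codim}\,\mathcal{O}(\cone(F))=\dim\cone(F)$, smallness is equivalent to $2\dim\tilde\tau_F>\dim\cone(F)$ for each non-empty $F$. The paper makes exactly this reduction, phrased via the codimension formula $\mathrm{codim}(\cone(F))=e(G_F)+s(G_F)-k$ and the fibre Poincar\'e polynomial of de Cataldo--Migliorini--Musta\c{t}\v{a}. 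Up to this point the approaches coincide.

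The gap is in what comes next, and it is substantial. First, your characterization of $\tilde\tau_F$ is off. Under Gale duality the rays of a cone in $\widetilde\Sigma$ correspond to a subset $S$ of edges of $\mathbb{K}_k$, but it is the \emph{complement} $\overline S$ that is the combinatorially meaningful graph: $\cone(\beta^S)\in\widetilde\Sigma$ exactly when $\overline S$ contains a spanning tree rooted at vertex $1$, and $\cone(\beta^S)$ maps to $\cone(F)$ exactly when the graph on $\overline S$ is connected, rooted at $1$, and has the \emph{same directed cycles as the naked graph $G_F$}. For $F=\Pi$ (the apex) $G_F$ is edgeless, so these complementary graphs are acyclic and rooted at $1$ --- that is the Theorem~\ref{volosataya} picture --- but for a general non-empty $F$ the complementary graph is forced to contain the cycles of $G_F$ and is never acyclic. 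So ``$S_F$ is an acyclic rooted subgraph'' is true only for the apex, and your argument does not address the other faces, all of which must be checked.

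Second, and more importantly, you yourself note that a ``naive dimension count yields only the non-strict bound,'' and you do not supply the argument that upgrades it. The paper's proof does not in fact isolate a minimal cone $\tilde\tau_F$ explicitly; instead it bounds the fibre dimension from above by $\max_{G}e(G)-e(G_F)-s(G_F)+1$ over $G$ with the prescribed cycle structure, and then makes the decisive observation that $2\max_G e(G)-e(G_F)\le e(\mathbb{K}_k)=2\sum_{i<j}r_{ij}$, with equality only when $G_F$ is a disjoint union of complete graphs. Combined with $s(G_F)\ge 1$, equality in the full chain forces $G_F=\mathbb{K}_k$, i.e.\ $F=\emptyset$. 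This comparison against $e(\mathbb{K}_k)$ and the equality analysis (not the asymmetry of vertex $1$, as you conjecture) is where the strict inequality comes from, and it is the idea missing from your proposal.
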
 

\subsection{Proof of Theorem \ref{smallmor}}

Clearly, we have $ \dim \widetilde{X}=\dim X$, thus to prove that the map 
$\varphi$ is small, we need to show that for any non-empty face $F<\Pi$ 
\begin{equation}\label{oursmallest} 
\mathrm{codim}(\mathcal{O}(\cone(F))\subset X) > 2\mathrm{dim}(\varphi^{-1}(y_F)), 
\end{equation}
where $y_F$ is a point in the orbit $\mathcal{O}(\cone(F))$, which is a stratum 
for our stratification. We start with the calculation of the codimension of 
$\mathcal{O}(\cone(F))$ in $X$.

Associating to each element $\alpha_{ij}\in \A $  the edge $\overleftarrow{ij}$ of the graph $\mathbb{K}_k$, we obtain a natural correspondence  between subsequences
$\mathcal{A}\subset\A$ and subgraphs of $\mathbb{K}_k$.  Applying Gale 
duality, we also obtain the correspondence 
between subgraphs of $\mathbb{K}_k$ and subsets of rays of the fan $\Sigma$ 
(and thus rays of the fan $\widetilde{\Sigma}$ as well). \label{corresp}

The following lemma describes the faces of the polytope $\Pi$ in terms of subgraphs in $\mathbb{K}_k$. 
\begin{lemma}\label{facegraphs}
Under the correspondence described above, naked graphs correspond to  faces of the polytope $\Pi$. 
\end{lemma}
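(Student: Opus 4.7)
The plan is to combine Gale duality (Lemma~\ref{Galebas}) with a flow-decomposition argument to match naked subgraphs of $\mathbb{K}_k$ with faces of $\Pi$. Concretely, I will attach to each face $F<\Pi$ the subgraph $G_F \subset \mathbb{K}_k$ consisting of those edges $e$ for which $\beta_e \notin F$, and show that the $G_F$ are exactly the naked subgraphs.

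First I would translate the face condition into a linear-algebraic one. A face $F$ is the locus where some $l \in \mathfrak{t}^*$ attains its maximum value $c$ on $\Pi$; setting $m_e := c - l(\beta_e)$ yields $m_e \geq 0$, with $m_e > 0$ precisely on $G_F$. Then
\[
\sum_e m_e \alpha_e \;=\; c \sum_e \alpha_e \;-\; \sum_e l(\beta_e)\, \alpha_e \;=\; 0,
\]
the first sum vanishing because $\A$ contains $\alpha_{ij}$ and $\alpha_{ji}=-\alpha_{ij}$ with equal multiplicities ($r_{ij}=r_{ji}$), and the second by Lemma~\ref{Galebas}. Conversely, given any strictly positive coefficients $\{m_e\}_{e \in G}$ on a subgraph $G$ satisfying $\sum_{e \in G} m_e \alpha_e = 0$, Lemma~\ref{Galebas} produces an $l \in \mathfrak{t}^*$ with $l(\beta_e) = m_e$ (extending by zero off $G$), and then $-l$ attains its maximum $0$ on $\Pi$ exactly at $\{\beta_e : e \notin G\}$, defining a face $F$ with $G_F = G$. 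So a subgraph $G$ arises as some $G_F$ iff the $\alpha_e$ for $e \in G$ admit a strictly positive linear relation summing to zero.

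Next I would recognize this positivity condition as a Kirchhoff flow condition on $G$. Since $\alpha_{ij} = \varepsilon_i-\varepsilon_j$ and $\overleftarrow{ij}$ runs from $j$ to $i$, a straightforward rearrangement gives
\[
\sum_e m_e \alpha_e \;=\; \sum_v \varepsilon_v \Bigl(\,\sum_{e \text{ into } v} m_e \;-\; \sum_{e \text{ out of } v} m_e\,\Bigr),
\]
so $\sum m_e \alpha_e = 0$ is exactly the statement that in-flow equals out-flow at every vertex of $G$. The classical flow-decomposition theorem (a nonnegative balanced flow decomposes as a nonnegative combination of unit flows around directed cycles) then tells us that a strictly positive balanced flow on $G$ exists if and only if every edge of $G$ lies on some directed cycle in $G$, i.e.\ iff $G$ is naked.

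The main obstacle is essentially just the bookkeeping around the direction of the correspondence: the naked subgraph attached to a face $F$ consists of the edges whose $\beta$-vectors lie strictly \emph{off} the supporting hyperplane of $F$, not those on $F$ itself. Once this dictionary is pinned down, both implications of the equivalence ``face $\Leftrightarrow$ naked subgraph'' reduce immediately to Lemma~\ref{Galebas} together with the flow-decomposition theorem.
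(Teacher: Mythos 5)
Your proof is correct and follows essentially the same approach as the paper's: Lemma~\ref{Galebas} translates the face condition into a strictly positive relation $\sum_e m_e\alpha_e=0$ supported on $G_F$, and a cycle argument identifies the subgraphs admitting such a relation as exactly the naked ones. You package the second step as the classical flow-decomposition theorem for circulations, where the paper iteratively subtracts multiples of cycle relations and appeals to the fact that a nonempty acyclic digraph has a source vertex --- the same idea, stated a bit more cleanly --- and your explicit use of $\sum_e\alpha_e=0$ (from $r_{ij}=r_{ji}$) spells out a detail the paper leaves implicit in its appeal to Gale duality.
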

\begin{proof}
For any subsequence $\mathcal{A}\subset\A$,  
we denote by $G_\mathcal{A}$ the graph 
corresponding to $\mathcal{A}$. 
By Gale duality, faces of $\Pi$ correspond to subsequences 
$\mathcal{A}\subset\A$  that have a positive linear combination summing up 
to zero:
\begin{equation}\label{sumalpha}
 \sum_{\alpha\in \mathcal{A}} m_{\alpha}\alpha = 0\quad\text{ with 
 }m_{\alpha}>0.
\end{equation}
Clearly, for any subsequence $\mathcal{C}\subset \mathcal{A}$ that corresponds 
to a directed cycle $G_\mathcal{C}$  of $G_\mathcal{A}$ we have 
$\sum_{\alpha\in \mathcal{C}} \alpha=0$ and one can subtract a multiple of this 
sum from \eqref{sumalpha} to obtain a proper subsequence 
$\mathcal{A}'\subset\mathcal{A}$ that also has a positive linear combination summing up to zero.

Assume that $G_\mathcal{A}$ is not a naked graph; then, repeating the procedure 
described above, we arrive at a nonempty subsequence of vectors 
$\bar{\mathcal{A}}\subset\A$ that has a positive linear combination summing up 
to zero, 
and such that the corresponding 
directed graph $G_{\bar{\mathcal{A}}}\subset \mathbb{K}_k$ is acyclic. Since 
any directed acyclic graph contains a vertex with only out-edges, we arrive at 
a  contradiction with \eqref{sumalpha}. 
\end{proof}
\noindent\textbf{Notation}\label{not:naked} We will denote the naked graph on  
$k$ vertices, corresponding to the face $F<\Pi$ by $G_F$.  In particular, the naked graph $G_\Pi$ has no edges and $G_\emptyset=\mathbb{K}_k$.

\begin{example}
The naked graphs from Figure \ref{fig:naked} correspond to the following faces of the prism $\Pi(1,1,1)$ (cf. Figure \ref{fig1}): the two-dimensional simplex $\{\beta_{13}, \beta_{32}, \beta_{21}\}$, the square $\{\beta_{13}, \beta_{31}, \beta_{32}, \beta_{23}\}$, two edges $\{\beta_{13}, \beta_{31}\}$ and $\{\beta_{13}, \beta_{23}\}$, the vertex $\{\beta_{13}\}$.
\end{example}
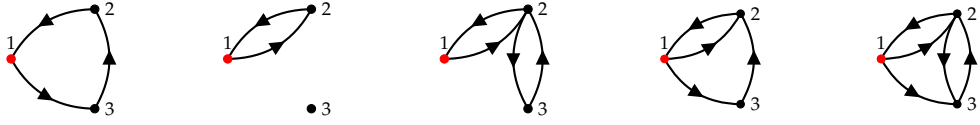
\begin{figure}[H]
\begin{minipage}{.19\textwidth}
\centering
\begin{tikzpicture}[scale=1.1]
\path [bend right, draw=black, thick] (1,0.6) edge node[currarrow, pos=0.5, xscale=-1, sloped] {} (0,0);
\path [bend right, draw=black, thick] (1,-0.6) edge node[currarrow, pos=0.5, sloped] {} (1,0.6);
\path [bend left, draw=black, thick] (1,-0.6) edge node[currarrow, pos=0.5, xscale=1, sloped] {} (0,0);
\draw [red, fill] (0,0) circle [radius=0.05];
\draw [fill] (1,0.6) circle [radius=0.05];
\draw [fill] (1,-0.6) circle [radius=0.05];
\node [above] at (0,0) {\tiny $1$};
\node [right] at (1,0.6) {\tiny $2$};
\node [right] at (1,-0.6) {\tiny $3$};
\end{tikzpicture}
\end{minipage}
\begin{minipage}{.19\textwidth}
\centering
\begin{tikzpicture}[scale=1.1]
\path [bend right, draw=black, thick] (1,0.6) edge node[currarrow, pos=0.5, xscale=-1, sloped] {} (0,0);
\path [bend left, draw=black, thick] (1,0.6) edge node[currarrow, pos=0.5, xscale=1, sloped] {} (0,0);
\draw [red, fill] (0,0) circle [radius=0.05];
\draw [fill] (1,0.6) circle [radius=0.05];
\draw [fill] (1,-0.6) circle [radius=0.05];
\node [above] at (0,0) {\tiny $1$};
\node [right] at (1,0.6) {\tiny $2$};
\node [right] at (1,-0.6) {\tiny $3$};
\end{tikzpicture}
\end{minipage}
\begin{minipage}{.19\textwidth}
\centering
\begin{tikzpicture}[scale=1.1]
\path [bend left, draw=black, thick] (1,0.6) edge node[currarrow, pos=0.5, xscale=-1, sloped] {} (1,-0.6);
\path [bend right, draw=black, thick] (1,0.6) edge node[currarrow, pos=0.5, xscale=1, sloped] {} (1,-0.6);
\path [bend right, draw=black, thick] (1,0.6) edge node[currarrow, pos=0.5, xscale=-1,  sloped] {} (0,0);
\path [bend left, draw=black, thick] (1,0.6) edge node[currarrow, pos=0.5, xscale=1, sloped] {} (0,0);
\draw [red, fill] (0,0) circle [radius=0.05];
\draw [fill] (1,0.6) circle [radius=0.05];
\draw [fill] (1,-0.6) circle [radius=0.05];
\node [above] at (0,0) {\tiny $1$};
\node [right] at (1,0.6) {\tiny $2$};
\node [right] at (1,-0.6) {\tiny $3$};
\end{tikzpicture}
\end{minipage}
\begin{minipage}{.19\textwidth}
\centering
\begin{tikzpicture}[scale=1]
\path [bend right, draw=black, thick] (1,0.6) edge node[currarrow, pos=0.5, xscale=-1, sloped] {} (0,0);
\path [bend left, draw=black, thick] (1,0.6) edge node[currarrow, pos=0.5, xscale=1, sloped] {} (0,0);
\path [bend left, draw=black, thick] (1,-0.6) edge node[currarrow, pos=0.5, xscale=1, sloped] {} (0,0);
\path [bend right, draw=black, thick] (1,-0.6) edge node[currarrow, pos=0.5, sloped] {} (1,0.6);
\draw [red, fill] (0,0) circle [radius=0.05];
\draw [fill] (1,0.6) circle [radius=0.05];
\draw [fill] (1,-0.6) circle [radius=0.05];
\node [above] at (0,0) {\tiny $1$};
\node [right] at (1,0.6) {\tiny $2$};
\node [right] at (1,-0.6) {\tiny $3$};
\end{tikzpicture}
\end{minipage}
\begin{minipage}{.19\textwidth}
\centering
\begin{tikzpicture}[scale=1]
\path [bend left, draw=black, thick] (1,0.6) edge node[currarrow, pos=0.5, xscale=-1, sloped] {} (1,-0.6);
\path [bend right, draw=black, thick] (1,0.6) edge node[currarrow, pos=0.5, xscale=1, sloped] {} (1,-0.6);
\path [bend right, draw=black, thick] (1,0.6) edge node[currarrow, pos=0.5, xscale=-1,  sloped] {} (0,0);
\path [bend left, draw=black, thick] (1,0.6) edge node[currarrow, pos=0.5, xscale=1, sloped] {} (0,0);
\path [bend left, draw=black, thick] (1,-0.6) edge node[currarrow, pos=0.5, xscale=1,  sloped] {} (0,0);
\draw [red, fill] (0,0) circle [radius=0.05];
\draw [red, fill] (0,0) circle [radius=0.05];
\draw [fill] (1,0.6) circle [radius=0.05];
\draw [fill] (1,-0.6) circle [radius=0.05];
\node [above] at (0,0) {\tiny $1$};
\node [right] at (1,0.6) {\tiny $2$};
\node [right] at (1,-0.6) {\tiny $3$};
\end{tikzpicture}
\end{minipage}
\vskip 14pt
\setlength{\belowcaptionskip}{-8pt}\caption{Naked graphs on 3 vertices.}\label{fig:naked}
\end{figure}

\begin{lemma}\label{lemma:trees}
Denote by $\mathfrak{c}_{\textcolor{red}{1}}$  the chamber 
$\cone(\{\alpha_{12},\alpha_{13},...,\alpha_{1k}\})$ in $ Ch(\A)$ (cf. Lemma 
\ref{cham_1}). Then the cones $\cone(\alpha^I)$ for $I\in\mathrm{BInd}(\A, 
\mathfrak{c}_{\textcolor{red}{1}})$ correspond to the spanning trees in 
$\mathbb{G}_k$, rooted at the first vertex. 
\end{lemma}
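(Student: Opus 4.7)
The plan is to establish a bijective dictionary between bases $\alpha^I \in \mathrm{BInd}(\A,\mathfrak{c}_{\textcolor{red}{1}})$ and directed spanning trees of $\mathbb{G}_k$ rooted at vertex $1$. First I would unfold the edge-vector correspondence $\alpha_{ij} \leftrightarrow \overleftarrow{ij}$: since $\dim\ak^*=k-1$ and the linear relations among the $\alpha_{ij}$ come precisely from sums around (undirected) cycles, a subset of size $k-1$ is a basis of $\ak^*$ if and only if the corresponding undirected multigraph is a spanning tree of $K_k$. This identifies $\mathrm{BInd}(\A)$ with the set of directed spanning trees $T$ on $\{1,\ldots,k\}$ (with edges of $T$ drawn from $\mathbb{K}_k$, accounting for the multiplicities $r_{ij}$).

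Next I would reduce the chamber-containment condition to a membership check. Since $\mathfrak{c}_{\textcolor{red}{1}}$ is the relative interior of $\cone(\alpha_{12},\ldots,\alpha_{1k})$ and each $\cone(\alpha^I)$ is closed, one has $\mathfrak{c}_{\textcolor{red}{1}} \subset \cone(\alpha^I)$ if and only if
\[
\alpha_{1j} \in \cone(\alpha^I) \quad \text{for every } j=2,\ldots,k.
\]
So the task becomes: characterise those directed spanning trees $T$ for which each $\alpha_{1j}$ lies in the non-negative span of the basis vectors associated to $T$.

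The key computation is a telescoping identity along the unique undirected path $j=v_0,v_1,\ldots,v_m=1$ from $j$ to $1$ in $T$:
\[
\alpha_{1j} \;=\; \varepsilon_1-\varepsilon_j \;=\; \sum_{r=0}^{m-1}\bigl(\varepsilon_{v_{r+1}}-\varepsilon_{v_r}\bigr).
\]
Each summand $\varepsilon_{v_{r+1}}-\varepsilon_{v_r}$ is either $+\alpha^I$-vector or $-\alpha^I$-vector depending on the orientation of the tree edge $\{v_r,v_{r+1}\}$: the coefficient is $+1$ precisely when that edge is directed towards $1$ along the path. By uniqueness of the expansion of $\alpha_{1j}$ in the basis $\alpha^I$, the condition $\alpha_{1j}\in\cone(\alpha^I)$ is therefore equivalent to every edge on the $j$-to-$1$ path being oriented in the direction of that path. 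Imposing this simultaneously for all $j=2,\ldots,k$ is exactly the condition that $T$ is rooted at vertex $1$ in the sense of the definition on page~\pageref{bind}.

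Finally, any spanning tree rooted at $1$ has no edges outgoing from $1$ (those would point away from the root), so $T \subset \mathbb{G}_k$; conversely, any spanning tree in $\mathbb{G}_k$ rooted at $1$ yields, by the same telescoping, a basis whose cone contains $\mathfrak{c}_{\textcolor{red}{1}}$. The only real subtlety is the careful bookkeeping of three orientation conventions at once, namely $\alpha_{ij}=\varepsilon_i-\varepsilon_j$, the edge $\overleftarrow{ij}$ pointing from $j$ to $i$, and the chosen traversal direction from $j$ to $1$ along the tree path; once these signs are aligned, the lemma is immediate.
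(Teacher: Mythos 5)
Your argument is correct and arrives at the same conclusion via a closely related but more explicit computation. Both proofs begin with the same dictionary (a basis index set $I$ is simplicial iff the corresponding subgraph is a spanning tree), so the substance is in determining which orientations are compatible with the chamber $\mathfrak{c}_{\textcolor{red}{1}}$. The paper tests the single interior point $\theta_1 = (k-1,-1,\dots,-1) = \sum_{j\geq 2}\alpha_{1j}$ against $\cone(\alpha^I)$: positivity of the expansion coefficients together with the fact that the $\varepsilon_j$-component of $\theta_1$ is $-1$ for each $j\geq 2$ forces at least one out-edge from every non-root vertex, and an edge count then pins down the rooted tree. You instead reduce $\mathfrak{c}_{\textcolor{red}{1}}\subset\cone(\alpha^I)$ to the $k-1$ conditions $\alpha_{1j}\in\cone(\alpha^I)$ and expand each $\alpha_{1j}$ by telescoping along the unique tree path from $j$ to $1$; the sign of each coefficient is $\pm 1$ and is visibly governed by the orientation of the corresponding tree edge. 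Your route has the small advantage of making the bijection explicit in both directions (rooted at $1$ is shown to be necessary \emph{and} sufficient), whereas the paper's proof as written only spells out necessity and leaves the converse implicit. The telescoping identity and the reduction to the extreme rays are the two elementary ingredients the paper replaces with the aggregated generic-point test; the underlying combinatorics is the same.
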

\begin{proof}
First, note that the cone $\cone(\alpha^I)$ is simplicial if and only if the 
vectors $ \{\alpha_i\}_{i\in I} $ are linearly independent; this happens if and 
only if the corresponding  graph has no (undirected) cycles, i.e. it  is a 
tree. 

Next, note that $\theta_1= (k-1,-1,...,-1)$ is a point in the interior of $\mathfrak{c}_{\textcolor{red}{1}}$, hence $\theta_1$ lies in the interior of $\cone(\alpha^I)$ for each $I\in\mathrm{BInd}(\A, \mathfrak{c}_{\textcolor{red}{1}})$. In other words, for each $I\in\mathrm{BInd}(\A, \mathfrak{c}_{\textcolor{red}{1}})$,  $\theta_1$ can be written as a linear combination of vectors $\alpha_i$, $i\in I$ with positive coefficients; this implies that the corresponding graph has at least one out-edge from the vertices $2, 3,..., k$. 
Clearly, such graph cannot have an out-edge starting at the first vertex, since this would create a cycle, hence the statement follows. 
\end{proof}

Since the fan $\tilS$ is simplicial, by Gale duality, we obtain the following 
statement. 
\begin{corollary}\label{conedimcount}
For $0\leq l  \leq (2\sum_{i<j}r_{ij} - k +1)$, there is a bijection between 
the codimension-$l$ cones in the fan $\widetilde{\Sigma}$ and the connected  
subgraphs of $\mathbb{K}_k$ with $ k $ vertices and $k-1+l$ edges, rooted at 
the first vertex.
\end{corollary}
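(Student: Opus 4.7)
The plan is to translate the corollary into graph theory via the ray/edge dictionary on page \pageref{corresp}, and then reduce everything to Lemma \ref{lemma:trees} together with a routine arborescence construction.

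First I would recall from Proposition \ref{fanforGIT}(2) that the maximal cones of $\widetilde{\Sigma}=\Sigma(\mathfrak{c}_{\textcolor{red}{1}})$ are precisely $\cone(\beta^{\overline{I}})$ for $I\in\mathrm{BInd}(\A,\mathfrak{c}_{\textcolor{red}{1}})$, and by Lemma \ref{lemma:trees} these index sets $I$ are in bijection with the spanning trees of $\mathbb{G}_k$ rooted at the first vertex. Since $\widetilde{\Sigma}$ is simplicial, every cone has a unique expression $\sigma=\cone(\beta^J)$ with $J\subset\overline{I}$ for some such $I$; its codimension equals $l$ iff $|J|=d-l$, whence $|\overline{J}|=n-d+l=k-1+l$. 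Under the ray/edge dictionary, $\overline{J}$ then determines a subgraph $G_{\overline{J}}\subset\mathbb{K}_k$ on all $k$ vertices with exactly $k-1+l$ edges, matching the desired edge count.

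The condition that $\cone(\beta^J)$ belongs to $\widetilde{\Sigma}$ translates, after passing to complements, to the assertion that $\overline{J}$ contains the edge set of some spanning tree of $\mathbb{G}_k$ rooted at vertex $1$. The corollary therefore reduces to the following combinatorial equivalence: a subgraph $G\subset\mathbb{K}_k$ spanning all $k$ vertices contains a spanning tree of $\mathbb{G}_k$ rooted at $1$ if and only if $G$ is connected and rooted at $1$.

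The forward direction is immediate, since any spanning tree rooted at $1$ already supplies directed paths to $1$ from every other vertex, which $G$ inherits. For the converse, I would run a breadth-first search in $G$ starting at vertex $1$ along reversed edges; because $G$ is rooted at $1$, this search reaches every vertex, and selecting one BFS-edge per non-root vertex produces an in-arborescence on $\{1,\dots,k\}$. All its edges travel along simple paths towards $1$, hence none originates at $1$, so the arborescence sits inside $\mathbb{G}_k$ and is the required spanning tree. I do not foresee any serious obstacle: bijectivity of the resulting correspondence is automatic, since distinct cones of a simplicial fan have distinct sets of extremal rays, giving distinct $J$ and hence distinct $G_{\overline{J}}$.
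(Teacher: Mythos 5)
Your argument is correct and follows the same route the paper takes (and leaves almost entirely implicit): identify the maximal cones of the simplicial fan $\widetilde{\Sigma}$ with the spanning trees of $\mathbb{G}_k$ rooted at $1$ via Lemma~\ref{lemma:trees}, then identify lower-dimensional cones with subgraphs of $\mathbb{K}_k$ containing such a tree by Gale-dual complementation of index sets. The BFS/in-arborescence step you spell out, showing that a spanning subgraph of $\mathbb{K}_k$ contains an in-arborescence of $\mathbb{G}_k$ rooted at $1$ precisely when it is connected and rooted at $1$, is exactly the elementary fact the paper is invoking when it dispatches the corollary with the phrase ``by Gale duality.''
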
	

For any graph $G$, we introduce the notation $e(G)$ and $s(G)$ for the number of edges and connected components of $G$, correspondingly. It follows from Corollary \ref{conedimcount} that 
\begin{equation}\label{codimcone}
\mathrm{codim}(\cone(F)) = e(G_F)+s(G_F)-k,
\end{equation}
and thus
\begin{equation}\label{codimstrata}
\mathrm{codim}(\mathcal{O}(\cone(F))\subset X) =\mathrm{dim}(\cone(F)) =  2\sum_{1\leq i< j\leq k}r_{ij} -e(G_F)-s(G_F)+1.
\end{equation}

Since our goal is to compare this number with the right-hand side of \eqref{oursmallest}, our next step will be to analyze the fibers of $\varphi$. 

In general, the combinatorics of the fan refinement provides an explicit description of the fibers of any toric morphism (cf. \cite{Yau}).
To prove Theorems \ref{volosataya} and \ref{smallmor} we will only need to 
calculate the Betti numbers of these fibers (see Theorem \ref{Poincar\'eFib} for 
the 
result).

\begin{lemma}\label{fibergraphs} 
Let $F$ be a face of $\Pi$, and let $G_F$ be the corresponding naked graph (cf. Lemma \ref{facegraphs}).
Denote by $\mathcal{G}^{\textcolor{red}{1}}_F$  the set of connected  
subgraphs of $\mathbb{K}_k$ on $ k $ vertices, which have the same directed 
cycles as the naked 
graph $G_F$, and which are rooted at the first vertex. 
Then, under the morphism $\widehat{\varphi}_\Sigma$, the cone $\sigma\in\tilS$ 
maps to the cone $\cone(F)\in\Sigma$  if and only if $\sigma$ corresponds (cf. 
page \pageref{corresp}) to some graph in the set 
$\mathcal{G}^{\textcolor{red}{1}}_F$.
\end{lemma}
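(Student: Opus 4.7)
The plan is to translate the refinement condition $\widehat{\varphi}_\Sigma(\sigma)=\cone(F)$ into a graph-theoretic comparison between $G_F$ and the subgraph $H_\sigma\subset\mathbb{K}_k$ attached to $\sigma$ by Corollary \ref{conedimcount}. Writing $\sigma=\cone(\{\beta_j:j\in J_\sigma\})$, the edges of $H_\sigma$ are indexed by the complement $\A\setminus J_\sigma$, while the generators of $\cone(F)$ are precisely the $\beta_i$ whose index $i$ is \emph{not} an edge of $G_F$. Consequently $\sigma\subseteq\cone(F)$ reduces to checking $\beta_j\in\cone(F)$ for each $j\in J_\sigma$, and $\widehat{\varphi}_\Sigma(\sigma)=\cone(F)$ amounts to $F$ being the minimal such face.

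The crux is the following claim: $\beta_j\in\cone(F)$ if and only if $j$ does not index an edge of $G_F$. The ``$\Leftarrow$'' direction is immediate since such a $\beta_j$ is already a generator of $\cone(F)$. For the other direction, suppose $j$ is an edge of $G_F$ and nevertheless $\beta_j=\sum_{i\notin E(G_F)}\lambda_i\beta_i$ with $\lambda_i\geq 0$. The resulting vanishing relation $-\beta_j+\sum_i\lambda_i\beta_i=0$ produces, via the involutive dual of Lemma \ref{Galebas}, a functional $l\in\ak$ with $\alpha_j(l)=-1$, $\alpha_i(l)=\lambda_i\geq 0$ for $i\notin E(G_F)$, and $\alpha_k(l)=0$ for every edge $k\neq j$ of $G_F$. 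Because $G_F$ is naked (Lemma \ref{facegraphs}), the edge $j$ lies on some directed cycle $C\subset G_F$, and applying $l$ to the tautology $\sum_{k\in C}\alpha_k=0$ collapses everything to $\alpha_j(l)=0$, contradicting $\alpha_j(l)=-1$.

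Granted the claim, $\sigma\subseteq\cone(F)$ is equivalent to the edge-set inclusion $E(G_F)\subseteq E(H_\sigma)$. By Lemma \ref{facegraphs}, the faces $F\leq\Pi$ meeting this condition are in bijection with the naked subgraphs of $H_\sigma$; shrinking $F$ enlarges $G_F$, so the minimal $F$ matches the unique maximal naked subgraph of $H_\sigma$, namely the union of those edges of $H_\sigma$ lying on some directed cycle. Hence $\widehat{\varphi}_\Sigma(\sigma)=\cone(F)$ precisely when $G_F$ coincides with this maximal naked subgraph, i.e.\ when $H_\sigma$ and $G_F$ share the same directed cycles. The remaining defining properties of $\mathcal{G}^{\textcolor{red}{1}}_F$ — connectedness, having $k$ vertices, and being rooted at vertex $1$ — are automatic for $H_\sigma$ by Corollary \ref{conedimcount}.

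The principal obstacle is the Gale-duality step of the middle paragraph: one must extract the Farkas-type alternative, align the vanishing coefficients with the edges of $G_F$, and exploit the cycle identity made available by nakedness. Once that is done, the remainder is routine bookkeeping with the face lattice of $\Pi$ and the refinement map $\widehat{\varphi}_\Sigma$.
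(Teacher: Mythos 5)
Your proof is correct and follows essentially the same route as the paper's: both reduce the condition $\widehat{\varphi}_\Sigma(\sigma)=\cone(F)$ to the two graph conditions $E(G_F)\subseteq E(G_\sigma)$ and minimality (equivalently, $G_F$ is the maximal naked subgraph of $G_\sigma$), and both obtain connectedness, the $k$-vertex condition, and rootedness at vertex $1$ from Corollary \ref{conedimcount} (the paper says ``repeating the argument from the proof of Lemma \ref{lemma:trees}''). The only real difference is that the paper simply asserts the equivalence $\sigma\subseteq\cone(F)\Leftrightarrow E(G_F)\subseteq E(G_\sigma)$, treating it as immediate from the identification of the vertex set of $F$ with the complement of $E(G_F)$, whereas you supply an explicit Gale-duality argument for the nontrivial direction, using the cycle identity $\sum_{k\in C}\alpha_k=0$ made available by nakedness — a correct and self-contained, if somewhat heavier, justification of a step the paper leaves implicit.
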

\begin{proof}
Denote by $G_\sigma$ the subgraph of $\mathbb{K}_k$ that corresponds to the cone $\sigma\subset \widetilde{\Sigma}$ under the correspondence described on page \pageref{corresp}. Repeating the argument from the proof of Lemma \ref{lemma:trees}, we conclude that $G_\sigma$ is a connected graph rooted at the first vertex. 

Recall that $\widehat{\varphi}_\Sigma(\sigma) = \cone(F)$ if and only if 
$\sigma \subset \cone(F)$
and $\cone(F)$ is the minimal cone in $\Sigma$ that contains $\sigma$.
Note that the first condition is equivalent to the fact that $G_\sigma$ contains the naked graph $G_F$, while the second condition requires $G_F$ to be the maximal naked graph contained in $G_\sigma$; thus $G_\sigma$ should have the same directed cycles as $G_F$. 
\end{proof}

\begin{theorem}\cite[Corollary 4.7]{dCMMtor}\label{Poincar\'eFib}
Let $\varphi: \widetilde{X}\to X$ be as on page \pageref{omitnot}. For any face $F<\Pi$, let 
$d_l(F)$ be  the number of $(\mathrm{dim}(\cone(F))-l)$-dimensional cones 
$\sigma\in\tilS$, such that the (relative) interior of $\sigma$ maps to the (relative) interior of 
the cone $\cone(F)\in\Sigma$:
$$d_l(F) = |\{\sigma\in\tilS \, | \,\,\, 
\widehat{\varphi}_\Sigma(\sigma)=\cone(F), \,\, 
\mathrm{codim}(\sigma)-\mathrm{codim}(\cone(F))=l\}|.$$ 
Then, for any point  $y_F$ in $\mathcal{O}(\cone(F))\subset X$, the Poincar\'e 
polynomial of the fiber $\varphi^{-1}(y_F)$ has the following form
$$\sum_{l\geq 0} \mathrm{dim}H^{2l}(\varphi^{-1}(y_F))t^{2l} = \sum_{l\geq 0} d_l(F)(t^2-1)^l. $$
\end{theorem}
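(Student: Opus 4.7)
The plan is to realize the fiber $\varphi^{-1}(y_F)$ as a complete simplicial toric variety whose fan is canonically indexed by the contributing cones $\{\sigma \in \tilS : \widehat{\varphi}_\Sigma(\sigma) = \cone(F)\}$, and then invoke the classical $f$-to-$h$ formula for the Poincaré polynomial of such a variety.

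First I would recall the general description of fibers of a toric morphism arising from a refinement of fans (cf.\ \cite{Yau, Fulton}). For $y_F$ in the distinguished orbit $\mathcal{O}(\cone(F))$, the fiber $\varphi^{-1}(y_F)$ carries a natural action of the stabilizer subtorus $T_{\cone(F)}\subset T_\mathfrak{t}$, turning it into a complete simplicial toric variety for an appropriate quotient torus. Its fan $\Delta_F$ lives in the quotient of $\mathrm{span}(\cone(F))$ by the linear span of the smallest contributing cone, and its cones are in bijection with those $\sigma \in \tilS$ satisfying $\widehat{\varphi}_\Sigma(\sigma) = \cone(F)$, i.e.\ cones whose relative interior sits inside the relative interior of $\cone(F)$. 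Under this bijection, the codimension of the cone in $\Delta_F$ associated with $\sigma$ equals $\mathrm{codim}(\sigma) - \mathrm{codim}(\cone(F)) = \dim\cone(F) - \dim\sigma$; denoting the fiber dimension by $m$, this yields $f_{m-l}(\Delta_F) = d_l(F)$. Completeness follows from the properness of the projective morphism $\varphi$, while simpliciality is inherited from the unimodularity of $\tilS$.

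Second I would apply the standard $f$-to-$h$ identity: for any complete simplicial toric variety $Y$ of dimension $m$ with fan $\Delta$, Proposition~\ref{Poincar\'eh-poly} combined with the relation $h(t) = \sum_{j} f_j(\Delta)(t-1)^{m-j}$ gives
\[ \sum_k \dim H^{2k}(Y)\,t^{2k} = h(t^2) = \sum_{j=0}^{m} f_j(\Delta)\,(t^2-1)^{m-j}. \]
Specializing to $Y = \varphi^{-1}(y_F)$, substituting $f_{m-l}(\Delta_F) = d_l(F)$, and reindexing by $l = m-j$ yields exactly the claimed identity.

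The main obstacle is the precise toric identification in the first step: one must verify that the $T_{\cone(F)}$-orbits appearing in $\varphi^{-1}(y_F)$, which correspond to the set $\{\mathcal{O}(\sigma) : \widehat{\varphi}_\Sigma(\sigma) = \cone(F)\}$, assemble into a complete simplicial toric variety whose fan realizes the stated dimension matching. This is the content of \cite[Corollary~4.7]{dCMMtor}. In our situation, Lemma~\ref{fibergraphs} translates the indexing set into the combinatorial set $\mathcal{G}^{\textcolor{red}{1}}_F$, which is exactly what will be used downstream to compute $d_l(F)$ and thus the Poincaré polynomial of the fiber via generating functions of rooted directed graphs.
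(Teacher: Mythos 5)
Your proposal hinges on the claim that the fiber $\varphi^{-1}(y_F)$ is itself a complete simplicial toric variety with a fan $\Delta_F$ indexed by $\{\sigma\in\tilS : \widehat{\varphi}_\Sigma(\sigma)=\cone(F)\}$, and this claim is false in general. Two separate things go wrong. First, the collection of cones $\sigma$ whose relative interior maps into the relative interior of $\cone(F)$ is not closed under passing to faces (a face $\tau<\sigma$ lying in a proper face of $\cone(F)$ has $\widehat{\varphi}_\Sigma(\tau)\neq\cone(F)$), and there is in general no unique smallest contributing cone to quotient by, so $\Delta_F$ is not a fan. Second, and more seriously, the fiber is in general a connected but \emph{reducible} variety: its irreducible components are toric varieties glued along toric subvarieties. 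This is recorded explicitly in Proposition~\ref{toricfib} of this paper for $F=\Pi$, and the Remark following it exhibits a case where $\varphi_\theta^{-1}(0)$ has six irreducible components. A reducible variety is not a toric variety, so Proposition~\ref{Poincar\'eh-poly} cannot be invoked for it.

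The stated formula is nevertheless true, and the actual mechanism behind it --- this is \cite[Corollary~4.7]{dCMMtor}, which the paper cites rather than reproves --- is different. The fiber is stratified by the locally closed pieces $\mathcal{O}(\sigma)\cap\varphi^{-1}(y_F)$ for $\sigma$ with $\widehat{\varphi}_\Sigma(\sigma)=\cone(F)$; each such piece is an algebraic torus of dimension $l=\mathrm{codim}(\sigma)-\mathrm{codim}(\cone(F))$, being a fiber of the surjection of tori $\mathcal{O}(\sigma)\to\mathcal{O}(\cone(F))$. Counting these strata produces the E-polynomial $\sum_l d_l(F)(uv-1)^l$. The essential ingredient you are missing is a purity statement: de Cataldo, Migliorini and Mustata show, using the decomposition theorem for the proper toric map $\varphi$, that $H^*(\varphi^{-1}(y_F))$ vanishes in odd degrees and is pure of Hodge--Tate type. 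Purity is what converts the E-polynomial count into the Poincar\'e polynomial $\sum_l d_l(F)(t^2-1)^l$. So your proposal identifies the correct combinatorial count of strata, but the step from that count to the Poincar\'e polynomial needs the purity input, not an identification of the fiber with a single toric variety --- an identification which is not available.
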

Putting together Lemma \ref{fibergraphs} and equation \eqref{codimcone}, we arrive at the following interpretation of the numbers $d_l(F)$ in terms of graphs. 
\begin{lemma}\label{dlgraphs}
The number $d_l(F)$ defined in Theorem \ref{Poincar\'eFib} is equal to the number of graphs in $\mathcal{G}^{\textcolor{red}{1}}_F$ with $e(G_F)+s(G_F)-1+l$ edges. 
\end{lemma}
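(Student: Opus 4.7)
The plan is to assemble the statement by combining three preceding results that together pin down the edge count of a graph $G_\sigma$ in terms of the codimension data of the cone $\sigma$.

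First, I would invoke Lemma \ref{fibergraphs}, which identifies the set $\{\sigma\in\tilS\,|\,\widehat{\varphi}_\Sigma(\sigma)=\cone(F)\}$ with the set $\mathcal{G}^{\textcolor{red}{1}}_F$ via the correspondence $\sigma\mapsto G_\sigma$ of page \pageref{corresp}. Thus the task reduces to showing that under this bijection the condition $\mathrm{codim}(\sigma)-\mathrm{codim}(\cone(F))=l$ is equivalent to $e(G_\sigma)=e(G_F)+s(G_F)-1+l$.

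Next, I would read off $\mathrm{codim}(\sigma)$ from Corollary \ref{conedimcount}: since any cone $\sigma\in\tilS$ corresponds to a connected subgraph $G_\sigma$ of $\mathbb{K}_k$ on $k$ vertices rooted at the first vertex, and codimension-$l'$ cones correspond to such graphs with $k-1+l'$ edges, we get the identity $\mathrm{codim}(\sigma)=e(G_\sigma)-k+1$. Combining this with formula \eqref{codimcone}, which gives $\mathrm{codim}(\cone(F))=e(G_F)+s(G_F)-k$, yields
\[
\mathrm{codim}(\sigma)-\mathrm{codim}(\cone(F))=e(G_\sigma)-e(G_F)-s(G_F)+1.
\]
Setting this equal to $l$ gives exactly $e(G_\sigma)=e(G_F)+s(G_F)-1+l$, which completes the count.

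There is no genuine obstacle here: the proof is a direct substitution built on top of Lemma \ref{fibergraphs} (which supplies the bijection), Corollary \ref{conedimcount} (which translates codimension in $\tilS$ into number of edges), and \eqref{codimcone} (which does the same for $\cone(F)\in\Sigma$). The only point worth a sentence of care is that the correspondence $\sigma\mapsto G_\sigma$ preserves the underlying bijective matching of cones with subgraphs on $k$ vertices, so restricting it to the fiber $\widehat{\varphi}_\Sigma^{-1}(\cone(F))$ and then to cones of fixed codimension really does produce the stated subset of $\mathcal{G}^{\textcolor{red}{1}}_F$ graded by edge count.
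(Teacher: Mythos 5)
Your proof is correct and follows exactly the route the paper intends: the paper's one-line justification "Putting together Lemma \ref{fibergraphs} and equation \eqref{codimcone}" is precisely the substitution you carry out, with Corollary \ref{conedimcount} supplying the intermediate identity $\mathrm{codim}(\sigma)=e(G_\sigma)-k+1$. Your spelled-out arithmetic reproduces the intended argument faithfully.
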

In particular, it follows from Theorem \ref{Poincar\'eFib} that for $y_F\in\mathcal{O}(\cone(F))$ the dimension of the fiber $\varphi^{-1}(y_F)$ is bounded from above by 
$$ \displaystyle{\max_{G\in\mathcal{G}^{\textcolor{red}{1}}_F}} \{e(G)\}-e(G_F)-s(G_F)+1. $$
Hence using expression \eqref{codimstrata} for the codimension of the strata
 $\mathcal{O}(\cone(F))$ in $X$, we can conclude that to prove 
\eqref{oursmallest}, it is enough to show that
\begin{equation} 
2\sum_{1\leq i< j\leq k}r_{ij} > \displaystyle{2\max_{G\in\mathcal{G}^{\textcolor{red}{1}}_F}} \{e(G)\} - e(G_F)-s(G_F)+1 
\end{equation}
for any non-empty $F<\Pi$. Note that we can think of $$\displaystyle{2\max_{G\in\mathcal{G}^{\textcolor{red}{1}}_F}} \{e(G)\} - e(G_F)$$ as of the number of edges in the graph $G\subset\mathbb{K}_k$, such that $G$ is obtained from the naked graph $G_F$ by adding all possible edges between the connected components of $G_F$. Then we have trivially
$$\displaystyle{2\max_{G\in\mathcal{G}^{\textcolor{red}{1}}_F}} \{e(G)\} - 
e(G_F) \leq e(\mathbb{K}_k) = 2\sum_{1\leq i< j\leq k}r_{ij},$$ with equality 
if and only if $G_F$ is a disjoint union of complete graphs. Hence we obtain 
the inequality  
$$\displaystyle{2\max_{G\in\mathcal{G}^{\textcolor{red}{1}}_F}} \{e(G)\} - e(G_F)-s(G_F)+1 \leq 2\sum_{1\leq i< j\leq k}r_{ij} - s(G_F)+1 \leq 2\sum_{1\leq i< j\leq k}r_{ij},$$
with equality if and only if $G_F$ is a disjoint union of complete graphs and 
$s(G_F)=1$. In this latter case, $G_F$ is connected and complete, and thus 
$G_F=\mathbb{K}_k$. This corresponds to the empty face $F=\emptyset$ with  
$\mathrm{codim}(\mathcal{O}(\cone(\emptyset))) = 0$, and Theorem \ref{smallmor} 
follows. 

\subsection{Proof of Theorem \ref{volosataya}}\label{S4.4}

Now we can finish the proof of Theorem \ref{volosataya}. 

Since the map $\varphi: \widetilde{X}\to X$ is small, it follows from Theorems \ref{dec}  and \ref{IChandg}  that the $g$-polynomial $g(\Pi, t^2)$ is equal to the Poincar\'e polynomial of the fiber $\varphi^{-1}(0)$. 

By Theorem \ref{Poincar\'eFib} and Lemma \ref{dlgraphs}, the Poincar\'e polynomial of the fiber $\varphi^{-1}(0)$ is equal to 
\begin{equation*} 
\sum_{l} \mathrm{dim}H^{2l}(\varphi^{-1}(0))t^{2l} = \sum_{l} d_l(\Pi)(t^2-1)^{l},
\end{equation*}
where $d_l(\Pi)$ is the number of graphs in $\mathcal{G}^{\textcolor{red}{1}}_\Pi$ (cf. Lemma \ref{fibergraphs}) with $e(G_\Pi)+s(G_\Pi)-1+l$ edges; after a change of variables, we arrive at the following statement:
\begin{equation}\label{PoinPi}
 g(\Pi, t^2+1) = \sum_{l} d_l(\Pi)t^{2l}.
 \end{equation}

By Lemma \ref{facegraphs}, $\mathcal{G}^{\textcolor{red}{1}}_\Pi$ is the set of acyclic subgraphs $G\subset\mathbb{K}_k$, such that $G$ has $k$ vertices and contains a spanning tree rooted at the first vertex; we also have $e(G_\Pi)=0$ and $s(G_\Pi)=k$.   

Clearly, a graph $G\in \mathcal{G}^{\textcolor{red}{1}}_\Pi$ has no edges of the form $\overleftarrow{i1}$ for $2\leq i \leq k$, since otherwise this edge together with a paths from $i$ to $1$ would create a directed cycle in $G$; hence any graph $G\in \mathcal{G}^{\textcolor{red}{1}}_\Pi$ is contained in $\mathbb{G}_k\subset\mathbb{K}_k$.

We conclude that $d_l(\Pi)$ is equal to the number of acyclic  subgraphs $G$ of $\mathbb{G}_k$ with $k-1+l$ edges and $k$ vertices, such that $G$ is rooted at the first vertex. Now Theorem \ref{volosataya} follows from equation \eqref{PoinPi}.

Applying the same argument and using Lemma \ref{fibergraphs},  we obtain the following statement. 
\begin{corollary}\label{g-polyF} Let $F$ be a face of the Gale dual type-A root polytope $\Pi(r_{12},...,r_{k-1 k})$. Then 
$g(F, t+1) = \sum_{i\geq 0} \hat{g}_it^i$,
where $\hat{g}_i$ counts the number of graphs $G\in \mathcal{G}^{\textcolor{red}{1}}_F$ with $e(G_F)+s(G_F)-1+i$ edges and $k$ vertices.
\end{corollary}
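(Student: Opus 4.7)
The plan is to run the argument of Theorem \ref{volosataya} verbatim, except that we apply it at the orbit $\mathcal{O}(\cone(F))$ instead of at the origin $\mathcal{O}(\cone(\Pi))=\{0\}$. This is legitimate because Theorem \ref{smallmor} establishes smallness of $\varphi:\widetilde{X}\to X$ as a morphism of varieties, so the Decomposition Theorem in the form of Theorem \ref{dec} applies uniformly to every stratum $\mathcal{O}(\cone(F))$, not just to the deepest one.

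First, I will invoke Theorem \ref{dec} at a point $y_F\in\mathcal{O}(\cone(F))$ to obtain the isomorphism
$$H^{2i}\bigl(\varphi^{-1}(y_F)\bigr)\;\cong\;IH^{i}_{\mathrm{prim}}\bigl(X(\Sigma_F)\bigr),\qquad 0\le i\le[\dim F/2],$$
and then apply Theorem \ref{IChandg}(ii) to recognise the right-hand side. Combining the two identifications, the Poincaré polynomial of the fiber, evaluated in the variable $t^2$, equals $g(F,t^2)$.

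Next, Theorem \ref{Poincar\'eFib} supplies a second expression for that same Poincaré polynomial, namely $\sum_{l\ge 0}d_l(F)(t^2-1)^l$. Equating the two and substituting $u=t^2$ yields
$$g(F,u+1)\;=\;\sum_{l\ge 0}d_l(F)\,u^{l}.$$
Finally, Lemma \ref{dlgraphs} identifies $d_l(F)$ with the number of graphs $G\in\mathcal{G}^{\textcolor{red}{1}}_F$ having $e(G_F)+s(G_F)-1+l$ edges on $k$ vertices, which is exactly the description of $\hat g_l$ claimed in the corollary.

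I do not anticipate any real obstacle: every ingredient has already been assembled for the proof of Theorem \ref{volosataya}, and the corollary is obtained simply by specialising that argument to the stratum attached to an arbitrary face $F$ instead of to $F=\Pi$. The only minor book-keeping step is to confirm that the substitution $u\leftrightarrow t^2$ faithfully converts the identity $g(F,t^2)=\sum_l d_l(F)(t^2-1)^l$ into $g(F,t+1)=\sum_l d_l(F)t^l$, which is immediate.
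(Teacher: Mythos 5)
Your proposal is correct and follows exactly the same route as the paper, which proves the corollary by the one-line remark ``applying the same argument [as for Theorem \ref{volosataya}] and using Lemma \ref{fibergraphs}.'' You have simply unpacked that remark: invoke Theorem \ref{dec} at the stratum $\mathcal{O}(\cone(F))$ rather than only at the origin, identify the fiber cohomology with $IH^*_{\mathrm{prim}}(X(\Sigma_F))$ via Theorem \ref{IChandg}(ii), equate with the formula from Theorem \ref{Poincar\'eFib}, and read off $d_l(F)$ through Lemma \ref{dlgraphs}.
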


Now we present a recursive formula for the $g$-polynomial of the Gale dual 
type-A root polytope.

\begin{proposition}\label{recgpoly}  Let $\Pi(r_{12},...,r_{k-1 k})$ be a 
Gale dual type-A root polytope and let $p(n,t)=1+t+t^2+...+t^{n-1}$. For any 
nonempty subset $J\subset\{2,...,k\}$ we denote by 
$\overline{J}=\{1,...,k\}\setminus J$ its complement  and  by $\Pi(r_{ij\in 
\overline{J}})$ the Gale dual root polytope corresponding to the root system 
$A_{|\overline{J}|-1}$  defined by the sequence of vectors 
$[\alpha_{ij}]_{i,j\in\overline{J}} \subset \A(r_{12},r_{13},...,r_{k-1 k})$ with 
$\alpha_{ij}$ repeated $r_{ij}$ times. Then
\begin{equation*}
g(\Pi(r_{12},...,r_{k-1 k}), t) = \sum_{\substack{J\subset\{2,...,k\} \\ J\neq\emptyset}}  (-1)^{|J|-1}g(\Pi(r_{ij\in \overline{J}}),t)\cdot\prod_{j\in J}p\big{(}\sum_{i\in\overline{J}}r_{ij},t\big{)}.
\end{equation*}
\end{proposition}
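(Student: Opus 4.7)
The plan is to work with the substitution $t\mapsto t+1$, since then the graph interpretation of Theorem \ref{volosataya} rewrites the left-hand side as
\begin{equation*}
g(\Pi,t+1)=\sum_{G}t^{e(G)-(k-1)},
\end{equation*}
where the sum is over acyclic subgraphs $G\subset\mathbb{G}_k$ on vertex set $\{1,\dots,k\}$ rooted at vertex $1$. The recursion, after the same substitution, becomes an identity of generating functions in $t$, and I would prove it by inclusion-exclusion on the set of \emph{source} vertices of $G$ (vertices of in-degree $0$). For $k\geq 2$ vertex $1$ is never a source (it must receive the paths from everything else), so the source set of every such $G$ is a nonempty subset of $\{2,\dots,k\}$; writing $B_J$ for the collection of $G$ in which every $j\in J$ is a source yields
\begin{equation*}
\sum_{G}t^{e(G)-(k-1)}=\sum_{\emptyset\neq J\subset\{2,\dots,k\}}(-1)^{|J|-1}\sum_{G\in B_J}t^{e(G)-(k-1)}.
\end{equation*}

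The heart of the proof is a bijective decomposition of graphs in $B_J$. Each $G\in B_J$ splits uniquely as a pair $(G'',(S_j)_{j\in J})$, where $G''=G|_{\overline{J}}$ is the induced subgraph on $\overline{J}$ and $S_j$ is the nonempty set of edges emanating from $j\in J$. Since $J$-vertices are sources, no edge of $G$ lands in $J$, so every $S_j$ consists of edges from $j$ to $\overline{J}$; because such edges can never close a cycle or interrupt a path in $\overline{J}$ to $1$, $G''$ is an acyclic rooted subgraph of $\mathbb{G}_{|\overline{J}|}$ in the sense of Theorem \ref{volosataya}, and conversely any such pair reassembles into a graph in $B_J$. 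The elementary identity
\begin{equation*}
\sum_{\emptyset\neq S_j\subset\{\text{edges }j\to\overline{J}\}}t^{|S_j|}=(1+t)^{n_j}-1=t\cdot p(n_j,t+1),\qquad n_j:=\sum_{i\in\overline{J}}r_{ij},
\end{equation*}
together with the inductive expression $\sum_{G''}t^{e(G'')}=t^{|\overline{J}|-1}g(\Pi(r_{ij\in\overline{J}}),t+1)$, then gives
\begin{equation*}
\sum_{G\in B_J}t^{e(G)-(k-1)}=t^{-(k-1)+(|\overline{J}|-1)+|J|}\,g(\Pi(r_{ij\in\overline{J}}),t+1)\prod_{j\in J}p(n_j,t+1),
\end{equation*}
and the exponent of $t$ collapses to $0$ via $|J|+|\overline{J}|=k$, producing exactly the required summand.

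I expect the main obstacle to be not the identity itself but the careful verification that the decomposition $G\leftrightarrow(G'',(S_j)_{j\in J})$ is genuinely a bijection onto the pairs described: acyclicity must be tracked (one direction is automatic because $J$-vertices have no in-edges, so any cycle would have to lie in $\overline{J}$) and the rooted-at-$1$ condition must be preserved in both directions (the extra edges of $S_j$ are exactly what allow each $j\in J$ to step into $\overline{J}$, where $G''$ supplies a path to $1$). Once this combinatorial dictionary is nailed down, the bookkeeping of powers of $t$ and the passage from $t+1$ back to $t$ (which is just a polynomial identity) are routine, so the argument reduces to the inclusion-exclusion on sources combined with the factorization above.
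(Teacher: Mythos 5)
Your proof is correct and follows essentially the same route as the paper's: a change of variables $t\mapsto t+1$, the graph interpretation from Theorem \ref{volosataya}, inclusion--exclusion over the source set $J\subset\{2,\dots,k\}$, and the decomposition of a graph in $B_J$ into its induced subgraph on $\overline{J}$ (itself an acyclic rooted graph of the smaller type) together with the nonempty bundles of out-edges from each $j\in J$. Your bookkeeping of the extra factor of $t$ (i.e.\ $(1+t)^{n_j}-1=t\cdot p(n_j,t+1)$ and the cancellation of the exponent via $|J|+|\overline{J}|=k$) is exactly the computation the paper elides, and you correctly identify the two verifications that make the decomposition a bijection (that paths to $1$ avoid $J$-vertices, and that reassembly preserves acyclicity and rootedness), which the paper also leaves implicit.
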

\begin{proof}
First, we perform a change of variables $t\to t+1$ in the recursion; by Theorem \ref{volosataya}, the polynomial $g(\Pi(r_{12},...,r_{k-1 k}), t+1)$ counts the number of edges in acyclic subgraphs of $\mathbb{G}_k$  with $k$ vertices, rooted at the first vertex. Note that any such graph has a vertex $j\in\{2,3,...,k\}$ that has no in-edges. 

Now fix a nonempty subset $J\subset\{1,2,...,k\}$. We claim that  
\begin{equation}\label{eq:recg}
g(\Pi(r_{ij\in \overline{J}}),t+1)\cdot\prod_{j\in J}p\big{(}\sum_{i\in\overline{J}}r_{ij},t+1\big{)} = \sum_{G\in\mathbb{G}^{'}_k} t^{e(G)-k+1},
\end{equation}
 where the sum is taken over the set $ \mathbb{G}^{'}_k $ of the connected acyclic subgraphs 
 $G\subset\mathbb{G}_k$  with $k$ vertices, rooted at the first vertex, such 
 that the vertices $j\in J$ of $G$ has no in-edges. 
 
 Indeed, let $G$ be a graph satisfying these conditions.
\begin{itemize}
\item A simple calculation shows that the polynomial $p\big{(}\sum_{i\in\overline{J}}r_{ij},t+1\big{)}$ counts  all out-edges of $G$ from the vertex $j\in J$.
\item We denote by $G\setminus J$ the graph obtained from $G$ by deleting all vertices labelled by $j\in J$ and all edges attached to these vertices. Then clearly, $G\setminus J$ is an acyclic subgraph of $\mathbb{G}_k$ with vertex set $\overline{J}$, rooted at the first vertex. 
\item Thus it follows from Theorem \ref{volosataya} that the edges of 
$G\setminus J$ are counted by the polynomial  $g(\Pi(r_{ij\in \overline{J}}),t+1)$.
\item Noting that any edge of $G$ is either an out-edge from some vertex $j\in J$, or is an edge of $G\setminus J$, we arrive at equation \eqref{eq:recg}. 
\end{itemize}
Now the recursion follows from the inclusion-exclusion principle.
\end{proof}
\begin{example}
For $k=3$, the recursive formula has the following simple form
\begin{multline*}
g(\Pi(r_{12},r_{13},r_{23}), t) = p({r_{12}+r_{23},t})\cdot g(\Pi(r_{13}), t) + \\  p{(r_{13}+r_{23}, t)}\cdot g(\Pi(r_{12}), t) -  
p{(r_{12},t)}\cdot p{(r_{13},t)}.
\end{multline*}
\end{example}

\section{The $h$-polynomial of the Gale dual type-A root polytope}\label{sec:hpol}
As explained in Example \ref{k=2space}, for $k=2$, we have 
$X(\Sigma_{\Pi(r_{12})}) = \mathbb{P}^{r_{12}-1}\times\mathbb{P}^{r_{12}-1}$, 
hence by Proposition \ref{Poincar\'eh-poly} the $h$-polynomial of the polytope 
$\Pi(r_{12})$ is equal to the Poincar\'e polynomial of the product of two 
projective spaces:
$$h(\Pi(r_{12}), t) = (1+t+t^2+...+t^{r_{12}-1})^2.$$
In this section, we calculate $h$-polynomials of Gale dual polytopes which are associated to the $A_{k-1}$ root system for $k\geq3$.

\subsection{$k=3$ case}
We start with the calculation of the $h$-polynomial of the polytope 
$\Pi(1,1,1)$, shown in Figure \ref{fig1}. This three-dimensional prism 
has\begin{itemize}
	\item  6 faces of dimension $0$, 
	\item 9 faces of dimension $1$, and 
	\item 2 faces of dimension $2$, which are all simplicial;
	\item  there are also 3 two-dimensional nonsimplicial faces, which are 
	squares. 
\end{itemize}
Using  \eqref{g-poly}, we obtain that the $g$-polynomial of a square equals  
$1+t$. 
Then it follows from \eqref{h-polyARB} that
$$h(\Pi(1,1,1), t) = (t-1)^3+6(t-1)^2+9(t-1)+2+3(t+1) = (t+1)^3.$$
Note that $(t+1)^3$ is the Poincar\'e polynomial of 
$\mathbb{P}^1\times\mathbb{P}^1\times\mathbb{P}^1$, and by Proposition 
\ref{Poincar\'eh-poly} it is equal to the $h$-polynomial of the octahedron, 
which is combinatorially dual to the three-dimensional cube. 

This corresponds to the fact that the prism $\Pi(1,1,1)$ has a simplicial 
subdivision $\widetilde{\Pi}(1,1,1)$ which does not add any vertices and 
divides every two-dimensional nonsimplicial face into two simplices by adding 
its diagonal (see Figure \ref{fig:nonprojres}). The resulting polytope 
$\widetilde{\Pi}(1,1,1)$ is combinatorially equivalent to the octahedron and 
the map of the corresponding projective toric varieties (cf. \S\ref{S2.3})
$$f: X(\Sigma_{\widetilde{\Pi}(1,1,1)})\to X(\Sigma_{{\Pi}(1,1,1)})$$
is small. More precisely, it is an isomorphism outside the three singular points of $X(\Sigma_{{\Pi}(1,1,1)})$ which correspond to the cones in $\Sigma_{\Pi(1,1,1)}$ over the nonsimplicial faces, and the fibers over these points are isomorphic to $\mathbb{P}^1$.
\begin{remark}
Note that this agrees with Theorem \ref{Poincar\'eFib} applied to the map $$\varphi: X(\A(1,1,1),(2,-1,-1)) \to X(\A(1,1,1), 0).$$ As it is shown in the Figure \ref{fig:fiber}, the corresponding morphism of fans $\widehat{\varphi}_\Sigma$ maps 2 three-dimensional and 1 two-dimensional cones in $\Sigma_{\widetilde{\Pi}(1,1,1)}$  to the interior of the cone over the square face of $\Pi(1,1,1)$. Thus by Theorem \ref{Poincar\'eFib}  the Poincar\'e polynomial of the fiber over the singular point should have the form $(t^2-1)+2$, which is indeed equal to the Poincar\'e polynomial of $\mathbb{P}^1$.
\end{remark}

\begin{figure}[H]
\begin{minipage}{.49\textwidth}
\centering
\begin{tikzpicture}[scale=1]
\path [draw=black, thick] (0,2) edge (3,2);
\path [draw=black, thick] (3,2) edge (1,1.2);
\path [draw=black, thick] (0,2) edge (1,1.2);
\path [draw=black, thick, dashed] (0,0.2) edge (3,0.2);
\path [draw=black, thick] (0,0.2) edge (1,-0.6);
\path [draw=black, thick] (1,-0.6) edge (3,0.2);
\path [draw=black, thick] (0,2) edge (0,0.2);
\path [draw=black, thick] (3,2) edge (3,0.2);
\path [draw=black, thick] (1,1.2) edge (1, -0.6);
\path [draw=orange, thick, dashed] (0,2) edge (3,0.2);
\path [draw=orange, thick] (0,0.2) edge (1,1.2);
\path [draw=orange, thick] (1,-0.6) edge (3,2);
\draw [fill] (1,-0.6) circle [radius=0.02];
\node [below] at (1,-0.55) {\tiny $\beta_{12}$};
\draw [fill] (0,0.2) circle [radius=0.02];
\node [below] at (0,0.2) {\tiny $\beta_{31}$};
\draw [fill] (3,0.2) circle [radius=0.02];
\node [below] at (3,0.2) {\tiny $\beta_{23}$};
\draw [fill] (1,1.2) circle [radius=0.02];
\node [right] at (0.9,1.1) {\tiny $\beta_{21}$};
\draw [fill] (0,2) circle [radius=0.02];
\node [left] at (0,2) {\tiny $\beta_{13}$};
\draw [fill] (3,2) circle [radius=0.02];
\node [right] at (3,2) {\tiny $\beta_{32}$};
\end{tikzpicture}
\vskip 14pt
\setlength{\belowcaptionskip}{-8pt}\caption{Refinement, combinatorially equivalent to the octahedron.}\label{fig:nonprojres}
\end{minipage}
\begin{minipage}{.49\textwidth}
\centering
\begin{tikzpicture}[scale=1]
\fill [fill=orange!10, ] (0,0.2) -- (2.1,0.8) -- (1,1.2);
\path [draw=black, thick] (0,2) edge (1,1.2);
\path [draw=black, thick] (0,0.2) edge (1,-0.6);
\path [draw=black, thick] (0,2) edge (0,0.2);
\path [draw=black, thick] (1,1.2) edge (1, -0.6);
\path [draw=orange, thick] (0,0.2) edge (1,1.2);
\draw [fill] (1,-0.6) circle [radius=0.02];
\node [below] at (1,-0.55) {\tiny $\beta_{12}$};
\draw [fill] (0,0.2) circle [radius=0.02];
\node [below] at (0,0.2) {\tiny $\beta_{31}$};
\draw [fill] (1,1.2) circle [radius=0.02];
\node [above] at (1.1,1.2) {\tiny $\beta_{21}$};
\draw [fill] (0,2) circle [radius=0.02];
\node [left] at (0,2) {\tiny $\beta_{13}$};
\path [draw=gray] (2.1,0.8) edge (0,2);
\path [draw=gray, dashed] (2.1,0.8) edge (0,0.2);
\path [draw=gray] (2.1,0.8) edge (1,-0.6);
\path [draw=gray] (2.1,0.8) edge (1,1.2);
\draw [fill] (2.1,0.8) circle [radius=0.02];
\node [above] at (2.1,0.8) {\tiny $0$};
\end{tikzpicture}
\vskip 14pt
\setlength{\belowcaptionskip}{-8pt}\caption{Calculation of the fiber over the singular point.}\label{fig:fiber}
\end{minipage}
\end{figure}
We note that the compact toric variety corresponding to this refinement is not 
projective.

It turns out that the $h$-polynomial of any Gale dual polytope obtained from 
the $A_2$  root system has the following elegant form, which is a special case 
of Theorem \ref{main:h-poly}.

\begin{lemma}\label{hpoly3} Let $\Pi(r_{12},r_{13},r_{23})$ be a Gale dual type-A root polytope and set $r_i=\sum_{j\neq i} r_{ij}$. Then the $h$-polynomial of $\Pi(r_{12},r_{13},r_{23})$  is equal to the Poincar\'e polynomial of the product of projective spaces $\mathbb{P}^{r_1-1}\times \mathbb{P}^{r_2-1}\times \mathbb{P}^{r_3-1}:$
$$h(\Pi(r_{12},r_{13},r_{23}), t) = \prod_{i=1}^3(1+t+t^2+...+t^{r_i-1}).$$ 
\end{lemma}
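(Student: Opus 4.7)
The plan is to establish Lemma \ref{hpoly3} by combining the recursion of Proposition \ref{recgpoly} with the palindromy of the intersection-cohomology $h$-polynomial and the universal relation \eqref{ghrelation}. Set $R = r_{12} + r_{13} + r_{23}$, so that $r_1 + r_2 + r_3 = 2R$ and $\dim \Pi = 2R - 3$, giving $[d/2] = R - 2$. Let $H(t) := \prod_{i=1}^3 p(r_i, t)$ be the candidate; since each factor is palindromic of degree $r_i - 1$ and $\sum(r_i - 1) = 2R - 3$, $H$ is palindromic of the correct degree. Because $h(\Pi, t)$ is also palindromic of degree $2R - 3$, it suffices to verify that $H$ and $h(\Pi, t)$ agree in their first $[d/2]+1 = R - 1$ Taylor coefficients, i.e., that $(1-t)H(t) \equiv g(\Pi, t) \pmod{t^{R-1}}$.

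To carry this out, I first apply Proposition \ref{recgpoly} at $k = 3$, summing over the three nonempty $J \subset \{2,3\}$ and using the base case $g(\Pi(r), t) = p(r, t)$, which follows from Example \ref{k=2space} (where $X(\Sigma_{\Pi(r)})$ is smooth with $h$-polynomial $p(r,t)^2$, so \eqref{ghrelation} forces $g = p(r, t)$). This yields
$$g(\Pi, t) = p(r_2, t)\,p(r_{13}, t) + p(r_3, t)\,p(r_{12}, t) - p(r_{12}, t)\,p(r_{13}, t).$$
Substituting $p(n, t) = (1 - t^n)/(1 - t)$ and using $r_2 + r_{13} = r_3 + r_{12} = R$ together with $r_{12} + r_{13} = r_1$, the numerator collapses to
$$(1-t)^2\, g(\Pi, t) \;=\; 1 - t^{r_1} - t^{r_2} - t^{r_3} + 2t^R.$$
Similarly, a direct expansion gives $(1-t)^2 \cdot (1-t) H(t) = \prod_{i=1}^3(1 - t^{r_i}) = 1 - \sum_i t^{r_i} + \sum_i t^{2R - r_i} - t^{2R}$, so the difference is
$$(1-t)^2\bigl[(1-t)H(t) - g(\Pi, t)\bigr] \;=\; -2t^R + \sum_{i=1}^3 t^{2R - r_i} - t^{2R}.$$

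The right-hand side has lowest-order term $t^R$, because each $2R - r_i \ge R + 1$ under the assumption $r_{ij} \ge 1$ (which makes $r_i \le R - 1$). Multiplying back by $(1-t)^{-2}$, whose expansion has constant term $1$, the series $(1-t)H(t) - g(\Pi, t)$ still starts at degree $R$, strictly above the cutoff $R - 1$; hence the desired congruence holds. Combined with the palindromy of $H$, this identifies $H(t)$ with $h(\Pi, t)$. The main obstacle is really only careful bookkeeping of the exponents; the underlying mechanism is that every ``extra'' monomial produced by $(1-t)H(t)$ beyond $g(\Pi, t)$ has degree $2R - r_i$ or $2R$, all of which land safely above the degree $R - 2$ where $g$ still controls $h$. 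This completes the plan.
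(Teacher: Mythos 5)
Your proof is correct, but it takes a genuinely different route from the paper's. The paper obtains Lemma \ref{hpoly3} by specializing Theorem \ref{main:h-poly} to $k=3$: the correction sum there runs over partitions of $\{1,2,3\}$ into at least two blocks each of size at least $2$, which is vacuous, leaving only the product term. That argument ultimately rests on the graph-theoretic bijection proving Theorem \ref{main:h-poly}. You instead work from the $g$-recursion of Proposition \ref{recgpoly} together with the base case $g(\Pi(r),t)=p(r,t)$ (deduced from Example \ref{k=2space} and \eqref{g-poly}), and then promote the resulting identity for $g$ to one for $h$ by invoking the palindromy of $h(\Pi,t)$, i.e.\ Poincar\'e duality for $IH^*(X(\Sigma_\Pi))$ via Theorem \ref{IChandg}. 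This trades the combinatorial bijection for the geometric input of Poincar\'e duality, and it exploits the fortunate circumstance, special to this low-rank case, that the natural candidate $\prod_i p(r_i,t)$ is already palindromic of the correct degree $\dim\Pi=2R-3$, so that agreement with $h$ through degree $R-2$ suffices. Your bookkeeping is sound: indeed $(1-t)^2 g(\Pi,t)=1-t^{r_1}-t^{r_2}-t^{r_3}+2t^{R}$ follows from $r_2+r_{13}=r_3+r_{12}=R$ and $r_{12}+r_{13}=r_1$, and the difference $(1-t)^2\bigl[(1-t)H(t)-g(\Pi,t)\bigr]=\sum_i t^{2R-r_i}-2t^{R}-t^{2R}$ is supported in degrees $\ge R$ because $r_i\le R$; dividing by $(1-t)^2$ preserves the lowest order, giving the needed congruence modulo $t^{R-1}$. (The assumption $r_{ij}\ge 1$ you mention is not actually needed for the degree bound, since $r_i\le R$ already holds.)
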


\subsection{Arbitrary $k$}
\begin{theorem}\label{main:h-poly} 
Let $\Pi(r_{12},..,r_{k-1 k})$ be a Gale dual type-A root polytope and let $r_i=\sum_{j\neq i}r_{ij}$. Given a subset $\lambda\subset\{1,...,k\}$, we denote by $\Pi({r}_{ij\in\lambda})$ the Gale dual  root polytope corresponding to the root system $A_{|\lambda|-1}$ defined by  the sequence of vectors $[\alpha_{ij}]_{i,j\in\lambda} \subset \A(r_{12},...,r_{k-1 k})$ with $\alpha_{ij}$ repeated $r_{ij}$ times. Then 
\begin{multline*}
h(\Pi(r_{12},...,r_{k-1 k}),t) = \\ \prod_{i=1}^k (1+t+...+t^{r_i-1}) - 
 \sum_{\substack{(\lambda_1,...,\lambda_p)\vdash\{1,...,k\} \\ p\geq 2, \, |\lambda_m|\geq 2}}t^{\sum\limits_{m<n}\sum\limits_{i\in \lambda_m}\sum\limits_{j\in \lambda_n}r_{ij}}\prod_{m=1}^ph(\Pi({r}_{ij\in\lambda_m}),t),
\end{multline*}
where the sum is taken over the partitions $\underline{\lambda}=(\lambda_1,...,\lambda_p)$ of the set $\underline{k}=\{1,...,k\}$ that have at least $2$ parts and do not have parts of cardinality $1$.
\end{theorem}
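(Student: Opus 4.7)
The plan is to reinterpret both sides of the identity as generating functions for directed multigraphs, extending the graph-theoretic formula for the $g$-polynomial in Theorem \ref{volosataya}, and then to prove the resulting combinatorial identity by induction on $k$.

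First, applying Corollary \ref{g-polyF} to each proper face $F<\Pi$ and using the combinatorial definition \eqref{h-polyARB} together with the codimension formula \eqref{codimcone} — which yields $d-2-\dim F=e(G_F)+s(G_F)-k-1$ — the contributions telescope into
\[
h(\Pi(r_{12},\ldots,r_{k-1\,k}),\,t+1)=\sum_{G\in\mathcal{H}_k^{1}}t^{e(G)-k},
\]
where $\mathcal{H}_k^{1}$ is the set of subgraphs of $\mathbb{K}_k$ on $k$ vertices that are rooted at vertex $1$ and contain an oriented cycle. The exponent from each face's contribution to \eqref{h-polyARB} exactly cancels the dependence on the naked graph $G_F$ coming from Corollary \ref{g-polyF}, leaving only the uniform weight $t^{e(G)-k}$ on each admissible graph $G$.

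Second, writing $p(r_i,t+1)=\sum_{j=0}^{r_i-1}\binom{r_i}{j+1}t^j$ and interpreting $\binom{r_i}{j+1}$ as the count of non-empty subsets of size $j+1$ of the $r_i$ outgoing edges at vertex $i$ in $\mathbb{K}_k$ yields
\[
\prod_{i=1}^k p(r_i,\,t+1)=\sum_{G\in\mathcal{H}_k^{+}}t^{e(G)-k},
\]
where $\mathcal{H}_k^{+}$ is the set of subgraphs of $\mathbb{K}_k$ on $k$ vertices in which every vertex has at least one outgoing edge. For each non-trivial partition $(\lambda_1,\ldots,\lambda_p)$ in the sum on the right hand side, the inductive hypothesis identifies $h(\Pi(r_{ij\in\lambda_m}),t+1)$ with a sum over graphs $G_m\in\mathcal{H}_{\lambda_m}^{v_m}$ rooted at the smallest vertex $v_m$ of $\lambda_m$ with an oriented cycle, while $(t+1)^{r_\lambda}$ counts subsets $S$ of the $r_\lambda$ directed edges of $\mathbb{K}_k$ going from lower-indexed blocks $\lambda_m$ to higher-indexed blocks $\lambda_n$.

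After the substitution $t\mapsto t+1$ the theorem thus reduces to the combinatorial identity
\[
\sum_{G\in\mathcal{H}_k^{+}}t^{e(G)-k}=\sum_{\lambda}(t+1)^{r_\lambda}\prod_{m=1}^p\sum_{G_m\in\mathcal{H}_{\lambda_m}^{v_m}}t^{e(G_m)-|\lambda_m|},
\]
summed over all partitions of $\{1,\ldots,k\}$ with $|\lambda_m|\geq 2$, with the trivial $p=1$ partition producing $h(\Pi_k,t+1)$ via Step 1. To establish this, one associates to each $G\in\mathcal{H}_k^{+}$ a canonical partition $\lambda(G)$ determined by the sink strongly connected components of the condensation of $G$: since every vertex has an outgoing edge and $\mathbb{K}_k$ has no self-loops, singleton SCCs can never be sinks, so the sink SCCs all have size at least $2$. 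The blocks of $\lambda(G)$ then collect the sink SCCs together with a canonical routing of each non-sink vertex into its ``terminal'' block, and the edges of $G$ split as $\bigsqcup_m G_m\cup S$ with $G_m=G|_{\lambda_m}$ and $S$ the inter-block edges.

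The main obstacle lies in verifying that this decomposition really is a bijection preserving the grading $e(G)-k=|S|+\sum_m(e(G_m)-|\lambda_m|)$, because graphs in $\mathcal{H}_k^{+}$ can have non-sink non-singleton SCCs or singleton vertices that flow into several distinct sink SCCs, and dually, graphs in $\mathcal{H}_k^1$ may have vertex $1$ with zero outgoing edges (hence lie outside $\mathcal{H}_k^+$). The routing convention must therefore be chosen so that the ``lower-to-higher'' cross-edge interpretation of $(t+1)^{r_\lambda}$ matches the image of $S$ under the decomposition; in particular, the trivial partition must absorb not only the graphs in $\mathcal{H}_k^{+}\cap\mathcal{H}_k^{1}$ but also pair up via a local involution the graphs in the symmetric difference $\mathcal{H}_k^{+}\triangle\mathcal{H}_k^{1}$ whose sink SCCs contain vertex $1$. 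The base case $k=2$ is immediate from Example \ref{k=2space}, where $h(\Pi(r_{12}),t)=p(r_{12},t)^2$ and no non-trivial partition contributes.
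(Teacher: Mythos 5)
Your Steps 1 and 2 are correct and match the paper exactly: the telescoping in \eqref{h-polyARB} together with \eqref{codimcone} and Corollary \ref{g-polyF} does give $h(\Pi,t+1)=\sum_{G}t^{e(G)-k}$ over rooted-at-$1$ graphs containing a cycle (this is Theorem \ref{hgraphs}), and $\prod_i p(r_i,t+1)$ does enumerate subgraphs with an out-edge at every vertex (Lemma \ref{LHS12}). You also correctly move the trivial partition into the sum, arriving at the same combinatorial identity \eqref{eq:h-poly} the paper proves.

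The gap is the bijection itself, which is the entire content of the theorem once the reformulation is done. You propose to partition the vertex set by the sink strongly connected components of the condensation and then ``canonically route'' non-sink vertices. This has two structural problems that you name but do not resolve, and they are not small. First, if $G$ has a single sink SCC $C$ but several other non-trivial SCCs feeding into it, your construction produces a single block $\lambda=\{1,\dots,k\}$, and $G|_\lambda=G$ is rooted at a vertex of $C$, \emph{not} at the smallest vertex of $\lambda$. That does not match the factor $h(\Pi(r_{ij\in\lambda_m}),t+1)=\sum_{G_m\in\mathcal{H}_{\lambda_m}^{v_m}}t^{e(G_m)-|\lambda_m|}$, which counts graphs rooted at $v_m=\min\lambda_m$. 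Second, the exponent $(t+1)^{r_\lambda}$ counts cross-edges directed from the block with the smaller minimum to the block with the larger minimum, whereas the cross-edges of an actual $G\in\mathcal{H}_k^+$ point toward sink SCCs, which need not coincide with smaller-minimum blocks. Your ``routing convention'' and ``local involution'' are exactly the missing construction, and the base case $k=2$ gives no guidance on how to build them.

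The paper resolves both problems \emph{before} attempting the bijection: Constructions [roots] and [cycles] (Lemmas \ref{redvertex} and \ref{conditions}) replace the naive conditions (rooted at $\min\lambda_m$; cross-edges directed by block-minimum order) with equivalent but bijection-friendly ones (rooted at the smallest vertex of the smallest cycle of $G$ inside $\lambda_m$; cross-edges directed by smallest-cycle order), by explicitly reversing directed paths. With these renormalized conditions, the decomposition of $G\in\mathcal{H}_k^+$ is the \emph{iterative} peeling $\lambda_{j+1}=\{$vertices that can still reach the smallest remaining cycle$\}$, which is generally strictly finer than a sink-SCC decomposition (e.g.\ if cycles $C_1\prec C_2\prec C_3$ each feed into $C_3$, the paper produces three blocks while the sink-SCC rule produces one), and this finer structure is precisely what makes the root and direction conditions hold block by block and makes the map a bijection. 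Your sketch would need an entirely different cancellation mechanism to make the coarser decomposition work, and no such mechanism is supplied.
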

\begin{proof}
First, we perform a change of variables $t\to t+1$ and rewrite the equation from Theorem \ref{main:h-poly} in the following form
\begin{equation}\label{eq:h-poly}
\prod_{i=1}^k ((t+1)^{r_i}-1)/t =  \sum_{\substack{\underline{\lambda}\vdash\underline{k} \\  |\lambda_m|\geq 2}}(t+1)^{\sum\limits_{m<n}\sum\limits_{i\in \lambda_m}\sum\limits_{j\in \lambda_n}r_{ij}}\prod_{m=1}^ph(\Pi({r}_{ij\in\lambda_m}),t+1).
\end{equation}

We can interpret the left-hand side of \eqref{eq:h-poly} in terms of graphs as follows.
\begin{lemma}\label{LHS12}
 Let $r_i=\sum_{j\neq i}r_{ij}$, then
$$\prod_{i=1}^k ((t+1)^{r_i}-1)/t = \sum_{i\geq 0} \hat{p}_it^i,$$
where $\hat{p}_i$ counts the number of subgraphs of $\mathbb{K}_k$ with $k+i$ edges and $k$ vertices, which have an out-edge from each vertex.
\end{lemma}
\begin{proof}
Note that the $i^{\mathrm{th}}$ factor in the product counts the number of out-edges from the $i^{\mathrm{th}}$ vertex in a subgraph of $\mathbb{K}_k$. 
\end{proof}

Now we describe the right-hand side of \eqref{eq:h-poly} in terms of graphs. 
\begin{theorem}\label{hgraphs}
We have the following combinatorial description of the $ h $-polynomial of the Gale dual type-A root polytope $\Pi(r_{12},...,r_{k-1 k})$:
\begin{equation}
h(\Pi(r_{12},...,r_{k-1 k}),t+1) =\sum_{G\in \mathbb{K}^{'}_k}t^{e(G)-k},
\end{equation}
where the sum is taken over the set $ \mathbb{K}^{'}_k $ of subgraphs of the complete graph $\mathbb{K}_k$ which have $k$ vertices, at least one cycle and which are rooted at the first vertex. 
\end{theorem}
\begin{proof}
Using equations \eqref{h-polyARB}, \eqref{codimcone} and Corollary \ref{g-polyF}, we obtain that
\begin{multline}\label{h-polyint}
h(\Pi(r_{12},...,r_{k-1 k}),t+1)  \overset{{\eqref{h-polyARB}}}= \sum_{F\lneq\Pi}g(F, t+1) t^{d-\mathrm{dim}(F)-1}  \overset{{\eqref{codimcone}}}= \\ \sum_{F\lneq\Pi}g(F, t+1) t^{e(G_F)+s(G_F)-k-1} \overset{\ref{g-polyF}}= 
 \sum_{F\lneq\Pi} \sum_{G\in \mathcal{G}^{\textcolor{red}{1}}_F} t^{e(G)-k} = \sum_{G\subset \mathbb{K}^{'}_k}t^{e(G)-k}
\end{multline}
\end{proof}

We can thus rewrite the right-hand side of equation \eqref{eq:h-poly} as 
\begin{equation}\label{rhsof12}
\sum_{\substack{\underline{\lambda}\vdash\underline{k} \\  |\lambda_i|\geq 2}} \sum_{G\in\mathbb{K}^{''}_k}t^{e(G)-k},
\end{equation}
where the last sum is taken over the set $ \mathbb{K}^{''}_k $ of subgraphs $G\subset \mathbb{K}_k$ which contain subgraphs $G_{\lambda_1},...,G_{\lambda_p}$ satisfying the following conditions: 
\begin{itemize}
\item the set of vertices of $G_{\lambda_i}$ is $\lambda_i\subset\{1,...,k\}$;
\item $G_{\lambda_i}$ contains a directed cycle;
\item $G_{\lambda_i}$ is rooted at the smallest vertex $v_i\in\lambda_i$;
\item all edges in $G$ between the sets of vertices $\lambda_i$ and $\lambda_j$ are in one direction: from $\lambda_i$ to $\lambda_j$, if the minimum of $\lambda_i\cup \lambda_j$ is equal to the minimum of $\lambda_i$.
\end{itemize} 
The last condition follows from the observation that  the exponent of $(t+1)$ on the right-hand side of equation \eqref{eq:h-poly} is half of the number of edges in the graph $\mathbb{K}_k$ between the sets of vertices $\lambda_1,...,\lambda_p$. 

In fact, to prove Theorem \ref{main:h-poly}, we need to choose the root vertex of $G_{\lambda_i}$ and the direction of the edges between the sets of vertices $\lambda_i$ and $\lambda_j$ in a more subtle way (for the result, see Lemma \ref{conditions}). 
The following construction allows us to perform a change of the root vertex.

\noindent\textbf{Construction [roots]}:\label{construction:roots} Let $ G $ be an acyclic subgraph of $\mathbb{K}_k$, which have $k$ vertices and are rooted at the first vertex. Change the direction of all edges in all directed paths from the $i^{\mathrm{th}}$ vertex to the first vertex, we obtain a bijection between two sets. 
\begin{lemma}\label{redvertex}
This construction induces a one-to-one correspondence between the acyclic subgraphs of $\mathbb{K}_k$, which have $k$ vertices and are rooted at the first vertex and the acyclic subgraphs of $\mathbb{K}_k$  with $k$ vertices, which are rooted at the $i^{\mathrm{th}}$ vertex. 
\end{lemma}

Throughout this section, we will use the following construction. 

\noindent\textbf{Construction [cycles]:}\label{construction:cycle} Let $G$ be a subgraph of $\mathbb{K}_k$ which contain at least one directed cycle. Removing all edges from $G$, that are not contained in any directed cycle, we obtain the maximal naked subgraph $G_F\subset G$. We say that a connected component of $G_F$ is \textit{non-trivial}, if it contains at least one edge; let $s$ be the number of non-trivial connected components of $G_F$. We order the non-trivial components $C_1\prec...\prec C_s$ in such way that for any $1\leq i<j \leq s$ the minimal vertex of $C_i \cup C_j$ is contained in $C_i$. 

\noindent\textbf{Notation:} We will refer to these ordered non-trivial connected components of $G_F$ as  \textit{cycles of} $G$.

\begin{lemma}\label{conditions}
The right-hand side of equation \eqref{eq:h-poly} can be expressed as the sum \eqref{rhsof12}, where now the last sum runs over graphs $G\subset \mathbb{K}_k$, which contains subgraphs $G_{\lambda_1},...,G_{\lambda_p}$ satisfying the following four conditions: 
\begin{enumerate}\label{conditions} 
\item the set of vertices of $G_{\lambda_i}$ is $\lambda_i\subset\{1,...,k\}$;
\item $G_{\lambda_i}$ contains a directed cycle;
\item $G_{\lambda_i}$ is rooted at the smallest vertex  in the smallest cycle of $G$ contained in $ G_{\lambda_i}$;
\item all edges in $G$ between the sets of vertices $\lambda_i$ and $\lambda_j$ are given in one direction: from $\lambda_i$ to $\lambda_j$, if the smallest cycle in $G_{\lambda_i}\cup G_{\lambda_j}$ is contained in $G_{\lambda_i}$.
\end{enumerate} 
\end{lemma}
\begin{proof}
First, note that any cycle $C_i$ of $G$ contains a directed path between any two vertices, hence all vertices of $C_i$ are contained in exactly one subgraph $G_{\lambda_j}\subset G$. 
 
Let $G_{\lambda_i}$ be a subgraph of $G$, such that its root vertex, the 
minimum $v_i$ of $\lambda_i$, is not contained in any cycle of $G$; denote by 
$C^{\lambda_{i}}$ the smallest cycle of $G$, which is contained in 
$G_{\lambda_i}$. We proceed as described in Lemma \ref{redvertex}, with the 
vertices of acyclic graphs replaced by the cycles of $G$. 
More precisely, we change the direction of all paths from the vertices of 
$C^{\lambda_{i}}$ to the root vertex $v_i$; we thus obtain a graph rooted at 
the smallest vertex of the smallest cycle contained in $ G_{\lambda_i}$, i.e. 
satisfying condition (3).

Similarly, to obtain condition (4) we consider the pairs of subgraphs $G_{\lambda_i}, G_{\lambda_j}$ of $G$, such that the minimum of $\lambda_i\cup \lambda_j$ is not contained in any cycle of $G$. We pick the smallest cycle of $G$, which is contained in  $G_{\lambda_i}\cup G_{\lambda_j}$ and change the 
 direction of all paths from the vertices of $G_{\lambda_i}$ to $G_{\lambda_j}$, if needed. 
\end{proof}

\noindent\textbf{Notation:} We introduce the notation L\eqref{eq:h-poly} for the set of subgraphs $G\subset\mathbb{K}_k$ with $k$ vertices that contain an out-edge from each vertex, and R\eqref{eq:h-poly} for the 
set of subgraphs $G\subset\mathbb{K}_k$ satisfying conditions (1)-(4) from Lemma \ref{conditions}.

We showed in Lemmas \ref{LHS12} and \ref{conditions} that the left-hand and the right-hand sides of equation \eqref{eq:h-poly} count graphs (with fixed number of edges) from the sets L\eqref{eq:h-poly} and R\eqref{eq:h-poly}, correspondingly.
 Hence to proof equation \eqref{eq:h-poly}, we need to construct a bijection between two sets of graphs L\eqref{eq:h-poly} and R\eqref{eq:h-poly}.

We start by observing that any graph satisfying conditions (1)-(4) from Lemma \ref{conditions} is clearly a subgraph of $\mathbb{K}_k$ with $k$ vertices, and has an out-edge from each vertex, and thus R\eqref{eq:h-poly} is a subset of L\eqref{eq:h-poly}.

Now let $G$ be an element of L\eqref{eq:h-poly}. To obtain a map from L\eqref{eq:h-poly} to R\eqref{eq:h-poly}, we 
need to associate to $G$ a partition $(\lambda_1,...,\lambda_p)$  of the set $\{1,...,k\}$ and find subgraphs $G_{\lambda_1},...,G_{\lambda_p} \subset G$, satisfying conditions (1)-(4) from Lemma \ref{conditions}.

Let $C_1\prec...\prec C_s$ be the cycles of $G$.
We set $\lambda_0=\emptyset$, and for $j\geq 0$ we iterate the following procedure to obtain a partition $(\lambda_1,...,\lambda_p)$ of $\{1,...,k\}$.
 \begin{itemize}
 \item Let $\lambda_j$ be a subset of $\{1,...,k\}$, such that each $v_i\subset\{1,...,k\}$ is either contained in $\lambda_j$ or does not intersect $\lambda_j$. We introduce the notation $G{\setminus \lambda_j}$ for the graph obtained from $G$  by deleting all vertices labelled by $\lambda_0\cup\lambda_1\cup...\cup\lambda_j$ and all edges attached to these vertices. We denote by $C^{\lambda_{j+1}}$ the smallest cycle of $G$, which is contained in $G{\setminus \lambda_j}$. 
 \item Let $\lambda_{j+1}\subset\{1,...,k\}$  be the set of vertices $i$ of the graph $G{\setminus\lambda_j}$, such that $C^{\lambda_{j+1}}$ is reachable from $i$, i.e. there is a directed path in $G{\setminus\lambda_j}$ from the vertex $i$ to a vertex in a subgraph $C^{\lambda_{j+1}}$.
\item As observed above, for any $1\leq i\leq s$, the connected graph $C_i$ contains a directed path between any two vertices, hence the set of vertices of $C_i$ is either contained in $\lambda_{j+1}$ or does not intersect $\lambda_{j+1}$. We can thus repeat the procedure, until we arrive at the empty graph $G\setminus\lambda_p$ for some $p$. 
 \end{itemize}
 
We make the following observation.
\begin{lemma}
Let $\lambda_1,...,\lambda_p$ be subsets of $\{1,...,k\}$ associated to a graph $G$ as above.  Then $(\lambda_1,...,\lambda_p)$ is a partition of the set $\{1,...,k\}$.
\end{lemma}
\begin{proof} 
It follows from the construction that all vertices of non-trivial cycles of $G$ are contained in the union $\lambda_1\cup ...\cup \lambda_p$.
We have to show that if the maximal naked graph  $G_F\subset G$ has a trivial connected component which consists of a single vertex $v$, then $v$ is an element of $\lambda_i$ for some $1\leq i \leq p$. 

Recall that $G$ contains an out-edge from the vertex $v$, and thus there is a directed path from $v$ to some non-trivial cycle $C_j$ of $G$; then it follows from the construction that $v$ belongs to the same subset $\lambda_i\subset\{1,...,k\}$ as all vertices of $C_j$. 
\end{proof}
We denote by $G_{\lambda_i}$ the subgraph of $G$ obtained by taking vertices from $\lambda_i\subset\{1,...,k\}$ and all edges between them. Then clearly, the collection of subgraphs $G_{\lambda_1},...,G_{\lambda_p}$ of $G$ satisfy conditions (1)-(4) from Lemma \ref{conditions}, and thus we obtain a map from the set L\eqref{eq:h-poly} to the set R\eqref{eq:h-poly}. 
 It is easy to check that this map is a bijection, and thus Theorem \ref{main:h-poly} follows.
\end{proof}

\section{Multiplication}\label{sec:multi}

In this section, we introduce a ring structure on the intersection cohomology 
of the affine toric variety $X(\A(r_{12},...,r_{k-1 k}),0)$ induced by the 
small resolution of singularities.

\subsection{Other chambers}\label{S6.1}

It turns out that we could have carried out the arguments of 
\S\ref{S4.2}-\S\ref{S4.4}  for any chamber in the chamber complex 
$Ch(\A(r_{12},...,r_{k-1 
k}))$.

\begin{proposition}\label{allsmall}
	Let $\theta$ be a generic point in $\mathbb{N}\A(r_{12},...,r_{k-1 k})$. 
	Then the morphism of toric varieties $$\varphi_\theta: 
	X(\A(r_{12},...,r_{k-1 
	k}), \theta)\to X(\A(r_{12},...,r_{k-1 k}),0)$$ is small.
\end{proposition}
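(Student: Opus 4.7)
The plan is to adapt the proof of Theorem \ref{smallmor} essentially verbatim, observing that the inputs which depended on the specific chamber $\mathfrak{c}_{\textcolor{red}{1}}$ admit chamber-independent analogues.

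Fix a generic $\theta$ lying in a chamber $\mathfrak{c}\in Ch(\A)$. By Proposition \ref{fanforGIT}, $X(\A,\theta)$ is the toric variety associated to the simplicial unimodular fan $\Sigma(\mathfrak{c})$, and $\varphi_\theta$ corresponds to the refinement $\Sigma(\mathfrak{c})\to \Sigma$ of the face fan of $\Pi$. Two key objects entering the proof of Theorem \ref{smallmor} are already chamber-independent: the stratification $X=\bigsqcup_{F<\Pi}\mathcal{O}(\cone(F))$ and the identification of faces $F<\Pi$ with naked subgraphs $G_F\subset \mathbb{K}_k$ (Lemma \ref{facegraphs}) depend only on $\Pi$. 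In particular, the codimension formula \eqref{codimstrata} for $\mathcal{O}(\cone(F))\subset X$ is the same for all $\theta$.

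The first step is to formulate chamber-dependent analogues of Lemma \ref{lemma:trees} and Lemma \ref{fibergraphs}. For the chamber $\mathfrak{c}$, the set $\mathrm{BInd}(\A,\mathfrak{c})$ still consists of $(k-1)$-element subsets $I$ whose corresponding edges form a spanning tree of $\mathbb{K}_k$, but now subject to a chamber-specific orientation/arborescence condition determined by the location of $\theta$. Repeating the proof of Lemma \ref{fibergraphs} verbatim then yields: a cone $\sigma \in \Sigma(\mathfrak{c})$ maps to $\cone(F)\in \Sigma$ if and only if its associated graph $G_\sigma$ is a connected subgraph of $\mathbb{K}_k$ on $k$ vertices satisfying the chamber condition, whose maximal naked subgraph is exactly $G_F$ (equivalently, which has the same directed cycles as $G_F$). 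Call this set $\mathcal{G}^{\mathfrak{c}}_F$; Theorem \ref{Poincar\'eFib} then gives
\[
\dim \varphi_\theta^{-1}(y_F) \leq \max_{G\in\mathcal{G}^{\mathfrak{c}}_F} e(G) - e(G_F) - s(G_F) + 1.
\]

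The crucial observation is that the smallness inequality at the end of the proof of Theorem \ref{smallmor} is itself chamber-independent. The bound
\[
2\max_{G\in\mathcal{G}^{\mathfrak{c}}_F}e(G)-e(G_F) \;\leq\; e(\mathbb{K}_k)=2\sum_{i<j}r_{ij}
\]
follows purely from the fact that any $G \in \mathcal{G}^{\mathfrak{c}}_F$ is obtained from $G_F$ by adjoining edges that create no new directed cycles; such additions can connect the components of $G_F$ in at most one direction between any two components, so their total is bounded by half of the $\mathbb{K}_k$-edges lying between distinct components of $G_F$. The chamber restriction can only decrease this maximum, never increase it, and the equality analysis from the proof of Theorem \ref{smallmor} applies unchanged, showing that \eqref{oursmallest} is strict for every nonempty $F<\Pi$. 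Thus $\varphi_\theta$ is small.

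The main obstacle is the analogue of Lemma \ref{lemma:trees}: giving an explicit combinatorial description of $\mathrm{BInd}(\A,\mathfrak{c})$ for arbitrary $\mathfrak{c}$, since for general multiplicities $r_{ij}$ the chamber structure of $Ch(\A)$ is intricate. Fortunately, the smallness argument does not require an explicit description; it only uses the general feature that $\mathcal{G}^{\mathfrak{c}}_F$ consists of connected graphs obtained from $G_F$ by adding edges that form a DAG on its components. This is a formal consequence of Gale duality (Lemma \ref{Galebas}) and the "no new directed cycles" condition, and holds uniformly across chambers.
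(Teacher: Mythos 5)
Your proposal is correct and takes essentially the same route as the paper. The paper's proof introduces the notation $\mathrm{Trees}(\theta)$ for the spanning trees corresponding to $\mathrm{BInd}(\A,\mathfrak{c})$ and replaces the set $\mathcal{G}^{\textcolor{red}{1}}_F$ by $\mathcal{G}^{\theta}_F$ (connected subgraphs with the same directed cycles as $G_F$ that contain at least one tree from $\mathrm{Trees}(\theta)$), then states that the dimension estimates from Theorem \ref{smallmor} go through unchanged; your observation that those estimates depend only on the ``no new directed cycles'' condition and not on the rooting, so that the bound $2\max e(G)-e(G_F)\le e(\mathbb{K}_k)$ is chamber-independent, is precisely the point the paper leaves implicit.
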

\begin{proof}
	The proof follows the logic of the proof of Theorem \ref{smallmor} . We 
	repeat the argument with only minor changes, thus omitting the details.
	
	As explained in the proof of Lemma \ref{lemma:trees}, each cone 
	$\cone(\alpha^I)$ for a basis index set 
	$I\in\mathrm{BInd}(\A(r_{12},...,r_{k-1 k}), \mathfrak{c})$ correspond to a 
	spanning tree in $\mathbb{K}_k$; we introduce the notation 
	$\mathrm{Trees}(\theta)$ for the set of these trees. 
	
	Let $F$ be a face of $\Pi$, and let $G_F$ be the corresponding naked graph 
	(cf. Lemma \ref{facegraphs}).
	Denote by $\mathcal{G}^{\theta}_F$  the set of connected subgraphs of 
	$\mathbb{K}_k$  with $k$ vertices that have the same directed cycles as the 
	naked graph $G_F$ and that contain at leas one tree from 
	$\mathrm{Trees}(\theta)$.
	Then (cf. Theorem \ref{Poincar\'eFib} and Lemma \ref{dlgraphs}) for any 
	point  $y$ in $\mathcal{O}(\cone(F))\subset X$ the Poincar\'e polynomial of 
	the fiber $\varphi_\theta^{-1}(y)$ is equal to
	$$\sum_{l\geq 0} d_l(F)(t^2-1)^l,$$
	where $d_l(F)$ is the number of graphs in $\mathcal{G}^{\theta}_F$ with 
	$e(G_F)+s(G_F)-1+l$ edges. 
	
	Repeating the dimension estimates from the proof of Theorem \ref{smallmor}, 
	we arrive at our statement.
\end{proof}
\subsection{A ring structure on the intersection cohomology}
Applying Theorem \ref{dec} to the small morphism $\varphi_\theta$ from 
Proposition \ref{allsmall}, we obtain the following statement.
\begin{corollary}\label{psi}
	Let $\theta$ and $\varphi_\theta$ be as in Proposition \ref{allsmall}. 
	There is an isomorphism 
	$$\psi_\theta: H^*(\varphi_\theta^{-1}(0)) \xrightarrow{\sim} IH^*(X(\A(r_{12},...,r_{k-1 
	k}),0))$$
	between the cohomology groups of the fiber $\varphi_\theta^{-1}(0)$ and the 
	intersection cohomology groups of the affine cone $X(\A(r_{12},...,r_{k-1 
	k}),0)$.
	The isomorphism $\psi_\theta$ induces a ring structure on the intersection 
	cohomology of $X(\A(r_{12},...,r_{k-1 k}),0)$.
	\end{corollary}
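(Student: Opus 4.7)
The plan is to observe that the argument used in the proof of Theorem \ref{volosataya} extends essentially verbatim to an arbitrary generic $\theta$, once smallness of $\varphi_\theta$ has been promoted from the distinguished chamber containing $\theta_1$ to all chambers of the chamber complex $Ch(\A(r_{12},\dots,r_{k-1\,k}))$ by Proposition \ref{allsmall}.

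In detail: for generic $\theta$ the toric variety $X(\A,\theta)$ is smooth, because its fan $\Sigma(\mathfrak{c})$ is unimodular ($\A$ itself being unimodular). Proposition \ref{allsmall} ensures that $\varphi_\theta$ is small, so Theorem \ref{dec} applies. Taking $F=\Pi$ as the top face (so that the associated point is $y_\Pi=0$) and repackaging the resulting isomorphisms
\[ H^{2k}(\varphi_\theta^{-1}(0))\simeq IH_{\mathrm{prim}}^k(X(\Sigma_\Pi)) \]
into a single graded isomorphism produces $\psi_\theta$, exactly as carried out in \S\ref{S4.4} for $\theta=\theta_1$.

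For the ring structure, note that $\varphi_\theta^{-1}(0)$ is a compact complex algebraic variety --- a union of torus-orbit closures inside the smooth toric variety $X(\A,\theta)$ --- so its rational cohomology is naturally a graded-commutative ring under cup product. Transporting this product across $\psi_\theta$ endows $IH^*(X(\A(r_{12},\dots,r_{k-1\,k}),0))$ with the asserted algebra structure, which is the full content of the corollary.

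The only subtlety, and the main obstacle were one to push further, lies outside the scope of the statement itself: the construction above depends a priori on the chamber containing $\theta$, since different chambers produce genuinely non-isomorphic resolutions $\widetilde{X}_\theta$. That the resulting rings nevertheless all coincide --- so that $\psi_\theta$ canonicalizes rather than merely equips the intersection cohomology with an algebra structure --- is the content of the Hausel--Sturmfels theorem \cite{HSt}, to be invoked separately (cf.\ the introduction and Theorem \ref{algrelations}). The present corollary requires none of this: it is a direct formal consequence of Proposition \ref{allsmall}, Theorem \ref{dec}, and the tautological cup product on the fiber.
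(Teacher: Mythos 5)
Your proof matches the paper's own derivation: the paper obtains Corollary \ref{psi} in one line by applying Theorem \ref{dec} to the small morphism of Proposition \ref{allsmall}, exactly as you do, and the details you add (unimodularity implying smoothness of $X(\A,\theta)$, compactness of $\varphi_\theta^{-1}(0)$ to justify the cup-product structure, and the remark that chamber-independence of the ring is deferred to Hausel--Sturmfels) are consistent with, and usefully flesh out, what the paper leaves implicit.
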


The cohomology rings $H^*(\varphi_\theta^{-1}(0))$ were studied in \cite{HSt}. 
In particular, it was shown that they do not depend on the choice of the 
elements $\theta\in\ak^*$ (cf. Theorem \ref{algrelations}), and thus the 
induced ring structure on the intersection cohomology of  
$X(\A(r_{12},...,r_{k-1 k}),0)$ is canonical.

\begin{theorem}\cite[Theorem 8.3]{HSt}\label{algrelations}
	Let $\theta$ be a generic point in $\mathbb{N}\A(r_{12},...,r_{k-1 k})$. 
	The cohomology ring of the fiber $\varphi_\theta^{-1}(0)$ is canonically 
	isomorphic to the quotient of 
	$\mathbb{C}[\varepsilon_i-\varepsilon_j, 1\leq i<j\leq k]$ by the ideal 
	generated by the polynomials 
\begin{equation}\label{HStrelations}
		p_{D}(\varepsilon_1,\dots,\varepsilon_k)=\prod_{\substack{i\in 
		D_1, j\in D_2}}(\varepsilon_i-\varepsilon_j)^{r_{ij}},
\end{equation} where $D=D_1\sqcup 
			D_2$ runs 
	over all nontrivial partitions of the set $\{1,2,...,k\}$. 
\end{theorem}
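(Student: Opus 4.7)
The proof plan follows the approach of Hausel--Sturmfels for computing the cohomology ring of the \emph{core} of a Lawrence (hypertoric) toric variety, specialised to our type-A setting; the key insight is that the ring structure is controlled by the graphic matroid underlying $\A(r_{12},\ldots,r_{k-1,k})$ rather than by the choice of chamber.

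The first step is to identify $\varphi_\theta^{-1}(0)$ as a union of compact torus-invariant subvarieties of the smooth toric variety $X(\A,\theta)$, one for each cone $\sigma\in\widetilde{\Sigma}$ whose relative interior lies in the interior of $\cone(\B)$. The cohomology of the ambient variety admits the classical Stanley--Reisner presentation
\begin{equation*}
H^*(X(\A,\theta))\cong\mathbb{C}[y_1,\ldots,y_n]/\bigl(\mathrm{SR}(\widetilde{\Sigma})+L\bigr),
\end{equation*}
where $y_i\in H^2$ is the class of the torus-invariant divisor $\{x_i=0\}$, and $L$ consists of the linear relations arising from the dual of \eqref{seq:toric}. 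Crucially, $L$ identifies the classes of any two parallel edges (that is, weights with $\alpha_i=\alpha_j$) and sets $y_{\overleftarrow{ij}}=-y_{\overleftarrow{ji}}$; after these identifications, the degree-two part becomes $\ak^*=\mathrm{span}\{\varepsilon_i-\varepsilon_j\}$ via $y_i\leftrightarrow\alpha_i$.

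The restriction map $H^*(X(\A,\theta))\twoheadrightarrow H^*(\varphi_\theta^{-1}(0))$ is surjective, which may be established by a Mayer--Vietoris argument on the compact toric strata or by a generic Bialynicki--Birula decomposition inherited from a one-parameter subtorus. Hence $H^*(\varphi_\theta^{-1}(0))$ is a graded quotient of $\mathrm{Sym}(\ak^*)\cong\mathbb{C}[\varepsilon_i-\varepsilon_j]$. That each polynomial $p_D$ lies in the kernel follows from a cut argument: for any bipartition $D=D_1\sqcup D_2$, every spanning tree in $\mathrm{Trees}(\theta)$ must contain at least one edge across the cut, so the corresponding product of parallel-edge divisor classes (which, after the $L$-identifications, becomes $p_D$ up to sign) restricts to zero on every maximal compact stratum of $\varphi_\theta^{-1}(0)$, and hence on the whole fiber.

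The main obstacle is to establish the reverse inclusion, namely that the ideal $(p_D)$ exhausts the kernel. I would prove this by a Hilbert-series comparison. On the geometric side, Theorem \ref{main:h-poly} already computes the graded dimension of $H^*(\varphi_\theta^{-1}(0))$ as the $h$-polynomial $h(\Pi,t)$. On the algebraic side, the Hilbert series of $\mathrm{Sym}(\ak^*)/(p_D)$ can be computed inductively via a peeling procedure over bipartitions of $\{1,\ldots,k\}$ that mirrors the recursion of Theorem \ref{main:h-poly}: the $p_D$ organise into a filtration whose associated graded pieces reproduce the summands of that recursion. Equality of the two Hilbert series then forces the surjection $\mathrm{Sym}(\ak^*)/(p_D)\twoheadrightarrow H^*(\varphi_\theta^{-1}(0))$ to be an isomorphism. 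Since the resulting presentation depends only on the data $\A(r_{12},\ldots,r_{k-1,k})$ and not on $\theta$, the ring structure transferred to $IH^*(X(\A,0))$ via $\psi_\theta$ in Corollary \ref{psi} is canonical.
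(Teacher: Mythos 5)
This theorem is not proved in the paper: it is a direct citation of Hausel--Sturmfels \cite[Theorem 8.3]{HSt}, and what follows in \S\ref{sec:multi} is only an illustration of one half of it, namely that each $p_D$ vanishes in $H^*(\varphi_\theta^{-1}(0))$. The paper does this via Proposition \ref{toricint} (the Szenes--Vergne/Jeffrey--Kirwan integral formula): for $G\in\mathrm{Top}(\theta)$, removing the ``cut'' weights $\mathcal{A}[D]$ from $\mathcal{A}_G$ leaves a graph with no spanning tree, so the complementary weights do not span $\ak^*$ and the class integrates to zero. Your argument for this direction is essentially the same combinatorial observation, phrased via the Stanley--Reisner ideal rather than the residue formula, which is fine. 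Two points to tighten: (a) the surjectivity $H^*(X(\A,\theta))\twoheadrightarrow H^*(\varphi_\theta^{-1}(0))$ is a nontrivial Kirwan-surjectivity statement (in HSt it follows from the hyperk\"ahler core structure, not from a formal Mayer--Vietoris), and should not be asserted as routine; (b) $p_D$ involves only the weights $\alpha_{ij}$ with $i\in D_1,\,j\in D_2$, which is half the cut, and whether the spanning tree of an irreducible component crosses the cut in the required orientation depends on where the root of the tree sits; one must invoke the linear relation $\chi(\alpha_{ij})=-\chi(\alpha_{ji})$ to reduce to a genuine Stanley--Reisner non-face of the fan on $\mathcal{A}_G$.

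The genuine gap is the reverse inclusion. You correctly identify the right target --- matching the Hilbert series of $\mathrm{Sym}(\ak^*)/(p_D)$ against the Poincar\'e polynomial $h(\Pi,t)$ of the fiber --- and this is indeed the spirit of the HSt proof. But the proposal that ``the $p_D$ organise into a filtration whose associated graded pieces reproduce the summands'' of Theorem \ref{main:h-poly} is asserted, not demonstrated, and it is not at all obvious: the recursion in Theorem \ref{main:h-poly} runs over partitions $\underline{\lambda}\vdash\underline{k}$ into blocks of size $\geq 2$ with a subtle graph-theoretic cycle/root bookkeeping, while the relations $p_D$ are indexed by bipartitions. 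Turning the one into a filtration (with no higher syzygy contributions) that realises the other would require a concrete construction and a Gr\"obner or regular-sequence argument that you have not supplied. As it stands this step is a placeholder, not a proof; the paper sidesteps it entirely by citing HSt, whose own argument for the dimension count goes through a different route (a volume/residue identity for the Lawrence polytope).
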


In the remainder of this section, we explain how these relations appear in our 
graphical formalism.

We begin by describing the reducible variety $\varphi_\theta^{-1}(0)$.  
We introduce the notation $\mathrm{Top}(\theta)$ for the subset of graphs in 
$\mathcal{G}^{\theta}_\Pi$ with maximal number of edges, which is equal to  
$\sum_{i<j}r_{ij}$. Using Lemma 
\ref{facegraphs}, we can reformulate Lemma 2.1.11 from \cite{Yau} in terms of 
graphs in the following form.

\begin{proposition}\label{toricfib}
	(i) The fiber of $\varphi_\theta$ over a point $y_\Pi\in 
	X(\A(r_{12},...,r_{k-1 k}),0)$ is a connected reducible variety, 
	whose irreducible components are toric varieties parametrized by the 
	elements in $\mathrm{Top}(\theta)$. 
	
	(ii) Let $G\in\mathrm{Top}(\theta)$, and denote the sequence of its edges 
	by $\mathcal{A}_G\subset \A(r_{12},...,r_{k-1 k})$; then the irreducible 
	component of $\varphi_\theta^{-1}(0)$ associated to $G$ is the toric 
	variety  $X(\mathcal{A}_G,\theta)$, which has  dimension 
	$\sum_{i<j}r_{ij}-k+1$. 
	
	(iii) For $G_1,...,G_m\in\mathrm{Top}(\theta)$,  denote by $G_1\cap...\cap 
	G_m$ the graph obtained by taking all common edges of $G_1,...,G_m$. Then 
	the  
	irreducible components  
	$X(\mathcal{A}_{G_1},\theta),...,X(\mathcal{A}_{G_m},\theta)$ are glued 
	along the toric variety $X(\mathcal{A}_{G_1\cap...\cap G_m},\theta)$.
\end{proposition}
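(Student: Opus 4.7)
The plan is to combine the general toric description of fibers of a toric morphism with the graph dictionary established in Lemma \ref{fibergraphs}. The starting point is the special case of the cited Lemma 2.1.11 in \cite{Yau}: for a toric morphism $\varphi: X(\tilS) \to X(\Sigma)$ induced by a refinement $\widehat{\varphi}_\Sigma: \tilS \to \Sigma$, and for the torus-fixed point $0 \in X(\Sigma)$ corresponding to a maximal cone $\sigma_0 \in \Sigma$, the fiber $\varphi^{-1}(0)$ equals the union $\bigcup V(\tau)$ over those $\tau \in \tilS$ with $\widehat{\varphi}_\Sigma(\tau) = \sigma_0$. In our setting $\sigma_0 = \cone(\Pi)$. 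Connectedness of $\varphi_\theta^{-1}(0)$ will follow from Zariski's connectedness theorem applied to the proper birational morphism $\varphi_\theta$ to the normal variety $X(\A,0)$.

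For the identification of the irreducible components in (i), the key observation is that $V(\tau) \subset V(\tau')$ iff $\tau' \subset \tau$, so the components of $\varphi_\theta^{-1}(0)$ are indexed by the \emph{minimal} cones $\tau \in \tilS$ with $\widehat{\varphi}_\Sigma(\tau) = \cone(\Pi)$. Applying Lemma \ref{fibergraphs} with $F = \Pi$ (note that $G_\Pi$ is the empty graph, so $\mathcal{G}^\theta_\Pi$ consists of connected acyclic $k$-vertex subgraphs of $\mathbb{K}_k$ containing a tree from $\mathrm{Trees}(\theta)$), this condition becomes $G_\tau \in \mathcal{G}^\theta_\Pi$. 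Under the correspondence on page \pageref{corresp}, a face of $\tau$ amounts to adding edges to the complementary subgraph $G_\tau$, so $\tau$ is minimal precisely when no edge of $\mathbb{K}_k$ can be added to $G_\tau$ without creating a directed cycle. This is exactly the condition that $G_\tau$ is a maximal acyclic subgraph of $\mathbb{K}_k$, i.e., $G_\tau \in \mathrm{Top}(\theta)$, proving (i).

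For part (ii), let $\tau_G$ denote the cone associated with $G \in \mathrm{Top}(\theta)$. Then $\dim \tau_G = n - e(G) = \sum_{i<j} r_{ij}$, so $V(\tau_G)$ has the claimed dimension $\dim X(\tilS) - \dim \tau_G = \sum_{i<j} r_{ij} - k + 1$. The identification $V(\tau_G) \simeq X(\mathcal{A}_G,\theta)$ will be proved by comparing fans: $V(\tau_G)$ is the toric variety of the star fan $\mathrm{Star}(\tau_G)$ in the quotient $\mathfrak{t}/\mathrm{span}(\tau_G)$. On the other hand, since $G$ is connected on all $k$ vertices, the subsequence $\mathcal{A}_G \subset \A$ spans $\ak^*$; by Gale duality its dual space is naturally identified with $\mathfrak{t}/\mathrm{span}(\tau_G)$, the span of $\tau_G$ being precisely the Gale-dual kernel generated by the $\beta$'s indexed by edges outside $G$. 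Restricting the chamber $\mathfrak{c}\ni\theta$ to the subspace spanned by $\mathcal{A}_G$ yields a chamber still containing $\theta$, and an application of Proposition \ref{fanforGIT}(2)-(3) shows that the resulting fan for $X(\mathcal{A}_G,\theta)$ coincides with $\mathrm{Star}(\tau_G)$.

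Finally, for (iii), the intersection $V(\tau_{G_1}) \cap \cdots \cap V(\tau_{G_m})$ equals $V(\tau)$ for the smallest cone $\tau \in \tilS$ containing $\tau_{G_1}\cup\cdots\cup\tau_{G_m}$. The rays of this minimal cone are precisely those $\beta_e$ with $e \notin G_i$ for every $i$, so the complementary graph equals $G_1 \cap \cdots \cap G_m$, and applying the identification from (ii) gives $V(\tau) \simeq X(\mathcal{A}_{G_1 \cap \cdots \cap G_m}, \theta)$. The main obstacle will be the clean Gale-duality identification in (ii): it requires carefully verifying that the chamber structure restricts compatibly to the subsequence $\mathcal{A}_G$ and that the restricted $\theta$ remains generic in the right chamber, so that the star fan and the GIT fan genuinely agree as fans in the same lattice.
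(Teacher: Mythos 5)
Your proposal is correct and follows essentially the same route as the paper, which simply invokes Lemma 2.1.11 of Hu--Liu--Yau together with the graph dictionary of \S\ref{sec:zhepol}; you are just spelling out what that reformulation amounts to (fiber as a union of orbit closures $V(\tau)$ over the preimage cones, minimal such cones $\leftrightarrow$ maximal acyclic graphs in $\mathcal{G}^\theta_\Pi = \mathrm{Top}(\theta)$, the star-fan/Gale-duality identification $V(\tau_G)\cong X(\mathcal{A}_G,\theta)$, and the standard intersection formula for orbit closures). The only point you should perhaps record explicitly is that the restricted configuration $\mathcal{A}_G$ still spans $\mathfrak{a}^*$ and $\theta$ remains generic for it because $\partial\mathcal{A}_G\subset\partial\A$, and that in part (iii) the chain of identifications degenerates consistently to $\varnothing$ when $G_1\cap\cdots\cap G_m$ fails to contain a tree from $\mathrm{Trees}(\theta)$ — neither you nor the paper addresses this corner case, but the conventions make both sides empty simultaneously.
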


\begin{remark}
	Unlike their cohomology rings, the varieties $\varphi_\theta^{-1}(0)$ are 
	not necesserely isomorphic for different $\theta$s. For example, let 
	$\theta_1 = (3,-1,-1,-1) \text{\,\,\, and \,\,\, } \theta_\mathrm{rand} = 
	(2,-3,2,-1)$ be two points in  the chamber complex $Ch(\A(1,1,1,1,1,1))$. A 
	simple calculation shows that $\varphi_{\theta_1}^{-1}(0)$ has 6 
	irreducible components that have Poincar\'e polynomials $1+2t^2+2t^4+t^6$, 
	while $\varphi_{\theta_\mathrm{rand}}^{-1}(0)$ has 6 irreducible 
	components, two of which are isomorphic to $\mathbb{P}^3$.
\end{remark}

To describe the ring structure on $H^*(\varphi_\theta^{-1}(0))$, we need one 
more ingredient: a general statement about relations in the cohomology 
ring of compact toric varieties. These are well-known; we will present them in 
the formalism of \cite{SzV}. We will use the notation of 
\S\ref{S2.1}-\S\ref{S2.2}.

Let $\theta$ be a weight in $\Gamma_{\mathfrak{a}}^*$. By the Chern-Weil 
construction, every polynomial on $\mathfrak{a}$ gives rise to a characteristic 
class of the toric variety $X(\A,\theta)$. Thus there is  a Chern-Weil 
homomorphism 
$\chi: \mathrm{Sym}(\mathfrak{a}^*)\to H^*(X(\A,\theta))$ from the polynomials 
on $\mathfrak{a}$ to the cohomology of $X(\A,\theta)$. In particular, for any 
vector $\alpha\in\Gamma_{\mathfrak{a}}^*$, $\chi(\alpha)$ is an element of 
$H^2(X(\A,\theta))$.

Now let $\A$ be a  sequence which lies in an open half space 
of the vector space $\mathfrak{a}^*$, and assume that $ \theta $ is generic. 
Then $ X(\A,\theta) $ is a projective orbifold, and for every 
polynomial $Q\in \mathrm{Sym}(\mathfrak{a}^*)$, one can write explicit formulas 
for  the intersection numbers $\int_{X(\A,\theta)}\chi(Q)$ using the 
Jeffrey-Kirwan residue \cite{BV, SzV}. In this paper, we will only need to 
integrate  some particularly simple classes, described below.

\begin{proposition}\cite[Proposition 2.3]{SzV}\label{toricint}
	Let $\A$ be a  sequence which lies in an open half space 
	of $\mathfrak{a}^*$, and let 
	$\theta$ be a generic point in $\Gamma_\mathfrak{a}^*$. 
	Then, for any set of indices $J\subset \{1,...,n\}^{n-d}$ we have  
	$$\int\limits_{X(\A,\theta)}\prod_{j\in J}\chi(\alpha_j) = 0,$$ if the 
	complement of $\{\alpha_j\}_{j\in J}$ in $\A$ does not span 
	$\mathfrak{a}^*$.
\end{proposition}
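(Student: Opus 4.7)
The plan is to interpret $\chi(\alpha_j)$ geometrically as the cohomology class of a toric divisor on $X(\A,\theta)$, and then to exploit the combinatorial vanishing of intersection products on a simplicial toric variety: a product of classes of distinct toric divisors vanishes as soon as the corresponding rays of the fan do not lie together in a single cone. As a first step, one identifies $\chi(\alpha_j)$ with the Poincar\'e dual of the toric divisor $D_j:=\{x_j=0\}\sslash^\theta T_\mathfrak{a}$ of $X(\A,\theta)$. The coordinate $x_j$ on $\C^n$ is a section of the trivial line bundle, equivariant of weight $\alpha_j$; it descends to a section of the line bundle $L_{\alpha_j}$ on $X(\A,\theta)$ whose zero locus is precisely $D_j$, and via the standard Chern-Weil correspondence this yields $\chi(\alpha_j)=c_1(L_{\alpha_j})=[D_j]$.

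Next, I would invoke the classical fact for simplicial toric varieties that if $\{\beta_i\}$ are the ray generators of the fan $\tilS$ and $\{D_i\}$ the associated toric divisors, then $\prod_{j\in J}[D_j]=0$ in $H^*(X(\tilS))$ whenever the set $\{\beta_j:j\in J\}$ does not fit inside a single cone of $\tilS$; geometrically, the set-theoretic intersection $\bigcap_{j\in J}D_j$ is then empty. To translate this into the spanning hypothesis, I would apply Proposition \ref{fanforGIT}: the cones of $\tilS$ are of the form $\cone(\beta^{\bar I})$ with $I\in\mathrm{BInd}(\A,\mathfrak{c})$, where $\mathfrak{c}$ is the chamber containing $\theta$. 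Hence $\{\beta_j:j\in J\}$ lies in some cone of $\tilS$ iff there exists a basis index set $I\subset\{1,\dots,n\}\setminus J$ with $\theta\in\cone(\alpha^I)$; in particular this forces $\{\alpha_i:i\notin J\}$ to contain a basis of $\mathfrak{a}^*$. Contrapositively, if the complement fails to span $\mathfrak{a}^*$, no such $I$ can exist, so $\prod_{j\in J}\chi(\alpha_j)=0$ already in $H^*(X(\A,\theta))$, and the integral vanishes a fortiori.

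The main technical point to justify carefully is the vanishing $\prod_{j\in J}[D_j]=0$ in the (orbifold) cohomology of $X(\tilS)$ when the rays do not share a cone. For simplicial projective toric varieties, this follows from the Jurkiewicz-Danilov presentation of the rational cohomology ring as the Stanley-Reisner ring of $\tilS$ modulo the linear relations $\sum_i\langle v,\beta_i\rangle[D_i]=0$, $v\in\mathfrak{t}^*$, in which the desired vanishing is literally a monomial of the Stanley-Reisner ideal. The hypothesis that $\A$ lies in an open half space of $\mathfrak{a}^*$ ensures that $\cone(\B)$ is strongly convex and that $X(\A,\theta)$ is projective for generic $\theta$, so that this cohomological machinery applies and the integral over the orbifold fundamental class is well-defined.
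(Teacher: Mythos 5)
Your argument is correct and provides a valid proof, but it takes a genuinely different route from the one the paper relies on. The paper does not prove Proposition~\ref{toricint}; it cites it from \cite[Proposition 2.3]{SzV}, and the surrounding text makes clear that the mechanism there is the Jeffrey--Kirwan residue formula, which expresses $\int_{X(\A,\theta)}\chi(Q)$ as an iterated residue of $Q/\prod_i\alpha_i$ and derives the vanishing from the residue calculus: the residue is a sum of local contributions indexed by basis index sets $I$ with $\theta\in\cone(\alpha^I)$ lying in the complement of $J$, and there are none when that complement fails to span $\mathfrak{a}^*$. You instead identify $\chi(\alpha_j)=[D_j]$, invoke the Jurkiewicz--Danilov/Stanley--Reisner presentation of $H^*(X(\tilS);\mathbb{Q})$, and use Gale duality (Proposition~\ref{fanforGIT}) to translate the Stanley--Reisner non-face condition into the spanning condition on $\{\alpha_i\}_{i\notin J}$. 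Your route yields a slightly stronger conclusion: the class $\prod_{j\in J}\chi(\alpha_j)$ vanishes in $H^*(X(\A,\theta))$, not merely its integral, and this is actually the form of the statement used at the end of \S\ref{sec:multi} when the Hausel--Sturmfels relations are reproduced. (The paper itself remarks that the needed relations in toric cohomology are ``well-known''; your argument is precisely that classical background, whereas \cite{SzV} repackages it in residue terms.) One small slip in your justification of compactness: $\A$ lying in an open half space of $\mathfrak{a}^*$ is Gale dual to $\cone(\B)=\mathfrak{t}$, i.e.\ to the fan $\tilS$ being \emph{complete}, not to $\cone(\B)$ being strongly convex; the conclusion you draw, that $X(\A,\theta)$ is compact and projective so that the cohomological machinery applies, is nonetheless correct, but for the opposite reason to the one stated.
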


Now we apply this statement to our situation.
Let $\theta\in\Gamma_{\mathfrak{a}}^*$ be generic, 
$G\in\mathrm{Top}(\theta)$ and $\mathcal{A}_G\subset\A(r_{12},...,r_{k-1 k})$ 
be as in Proposition \ref{toricfib}. Given  a nontrivial partition $D=D_1\sqcup 
D_2$ of the set $\{1,2,...,k\}$, denote by $\mathcal{A}[{D}]$ the subsequence 
$[\alpha_{ij}]_{\substack{i\in D_1, j\in D_2}}\subset\A(r_{12},...,r_{k-1 k})$, 
where the element $\alpha_{ij}\in\mathcal{A}[{D}]$ is repeated $r_{ij}$ times.

Clearly,  the sequence $\mathcal{A}_G\cap ((\A(r_{12},...,r_{k-1 
k})\setminus\mathcal{A}[{D}])$
corresponds to a subgraph of $\mathbb{K}_k$, which doest not contain any tree 
on $k$ vertices. Then it follows from Proposition \ref{toricint} that the 
product $$\prod_{\substack{i\in D_1 \\ j\in D_2}}\chi(\alpha_{ij})^{r_{ij}}\in 
H^*(X(\mathcal{A}_G, \theta))$$ is zero for any $G\in\mathrm{Top}(\theta)$, hence it is equal to zero in $ H^*(\varphi^{-1}_\theta(0))$.  We have thus 
reproduced the Hausel-Sturmfels relations given in \eqref{HStrelations}.

\end{document}